\documentclass[11pt,reqno]{amsart}

\usepackage[T1]{fontenc}
\usepackage[utf8]{inputenc}
\usepackage{lmodern}
\usepackage{microtype}
\usepackage[margin=1in]{geometry}
\usepackage{amsmath,amssymb,mathtools,mathrsfs,esint}
\usepackage{enumitem}
\usepackage{xcolor}
\usepackage{hyperref}
\usepackage{aliascnt}
\usepackage[nameinlink,capitalise,noabbrev]{cleveref}

\hypersetup{
  colorlinks=true,
  linkcolor=blue!45!black,
  citecolor=magenta,
  urlcolor=blue!45!black,
  pdftitle={Uniform logarithmic Sobolev inequalities for the 2D Coulomb gas at the diffusive temperature scale},
  pdfauthor={Matthew Rosenzweig}
}

\allowdisplaybreaks
\setlist[itemize]{topsep=4pt,itemsep=2pt,parsep=1pt}
\setlist[enumerate]{topsep=4pt,itemsep=3pt,parsep=1pt}
\numberwithin{equation}{section}

\theoremstyle{plain}
\newtheorem{theorem}{Theorem}[section]
\newaliascnt{proposition}{theorem}
\newtheorem{proposition}[proposition]{Proposition}
\aliascntresetthe{proposition}
\newaliascnt{lemma}{theorem}
\newtheorem{lemma}[lemma]{Lemma}
\aliascntresetthe{lemma}
\newaliascnt{corollary}{theorem}
\newtheorem{corollary}[corollary]{Corollary}
\aliascntresetthe{corollary}

\theoremstyle{definition}
\newaliascnt{definition}{theorem}

\aliascntresetthe{definition}

\theoremstyle{remark}
\newaliascnt{remark}{theorem}
\newtheorem{remark}[remark]{Remark}
\aliascntresetthe{remark}

\newcommand{\C}{\mathbb C}
\newcommand{\R}{\mathbb R}
\newcommand{\T}{\mathbb T}

\newcommand{\E}{\mathbb E}
\newcommand{\g}{\mathsf g}
\newcommand{\s}{\mathsf{s}}
\renewcommand{\d}{\mathsf{d}}
\newcommand{\D}{\,\mathrm d}
\newcommand{\Ent}{\operatorname{Ent}}
\newcommand{\Var}{\operatorname{Var}}

\newcommand{\cE}{\mathcal E}
\newcommand{\cF}{\mathcal F}
\newcommand{\cS}{\mathscr S}
\newcommand{\sfS}{\mathsf S}
\newcommand{\1}{\mathbf 1}

\title[Uniform LSI for the 2D Coulomb Gas at the Diffusive Temperature Scale]{Uniform Logarithmic Sobolev Inequalities for the 2D Coulomb Gas at the Diffusive Temperature Scale}
\author{Matthew Rosenzweig}
\address{Matthew Rosenzweig, Department of Mathematical Sciences, Carnegie Mellon University, 7127 Wean Hall, 5000 Forbes Avenue, Pittsburgh, PA 15213, USA}
\email{mrosenz2@andrew.cmu.edu}
\urladdr{https://matthewrosenzweigwork-max.github.io/}
\thanks{M.R. was supported by NSF grants DMS-2441170, DMS-2345533, and DMS-2342349.}

\begin{document}
\raggedbottom

\begin{abstract}
For $N\ge2$ and $\beta>0$, consider the canonical 2D Coulomb gas ensemble
\begin{equation}
 \D\mathbb P_{N,\beta}(Z)
 =\mathsf{Z}_{N,\beta}^{-1}e^{-\beta|Z|^2/2}
   \prod_{i<j}|z_i-z_j|^{\beta/N}\,\D Z
 \qquad\text{on }\C^N.
\end{equation}
The factor $N^{-1}$ in the pair exponent places the ensemble at the diffusive, or high-temperature, scale.  We prove that, for every fixed inverse temperature $\beta>0$, this Gibbs measure satisfies a logarithmic Sobolev inequality (LSI) on the closed Dirichlet-form domain generated by collision-free compactly supported smooth functions, with a positive constant independent of $N$.  The LSI constant may be chosen uniformly when $\beta$ ranges over a compact subset of $(0,\infty)$.  A second main result establishes Poincar\'e and logarithmic Sobolev inequalities for planar Gaussian measures weighted by finite products of positive powers of distances to points, with constants depending only on the total variance of the underlying Gaussian measure and the total exponent, not on the number or locations of the points.  To prove the full labeled Poincar\'e inequality, we decompose variance into permutation-invariant and label-dependent parts, controlled respectively by a weighted $\bar\partial$ estimate and by relative-coordinate conditioning with random transpositions; separately, one-site logarithmic Sobolev inequalities, a conditional entropy inequality, and a repulsive partition estimate give a defective logarithmic Sobolev inequality.  Rothaus tightening combines the resulting full labeled Poincar\'e inequality with the defective logarithmic Sobolev inequality.  No uniformity as $\beta\to0$ or $\beta\to\infty$ is asserted.
\end{abstract}

\maketitle
\enlargethispage{7pt}

\section{Introduction}\label{sec:introduction}

\subsection{The model and main theorem}\label{sec:intro-model}

Identify $\R^2$ with $\C$ and write $\g(z)=-\log|z|$ for the two-dimensional logarithmic/Coulomb potential.  For $Z=(z_1,\ldots,z_N)\in\C^N$, denote the $N$-particle Hamiltonian/energy by
\begin{equation}\label{eq:hamiltonian}
 \mathcal H_N(Z)
 :=\frac12\sum_{i=1}^N|z_i|^2
   +\frac1N\sum_{1\le i<j\le N}\g(z_i-z_j).
\end{equation}
The canonical ensemble for the 2D Coulomb gas considered in this paper is
\begin{equation}\label{eq:law}
 \D\mathbb P_{N,\beta}(Z)
 =\mathsf{Z}_{N,\beta}^{-1}e^{-\beta\mathcal H_N(Z)}\,\D Z
 =\mathsf{Z}_{N,\beta}^{-1}e^{-\beta|Z|^2/2}
   \prod_{i<j}|z_i-z_j|^{\beta/N}\,\D Z;
\end{equation}
where the normalizing constant $\mathsf{Z}_{N,\beta}$ is the partition function.
Here and below, {canonical} means that the particle number $N$ is fixed.  The interaction is repulsive, and the density therefore vanishes at collisions.  The factor $N^{-1}$ in \eqref{eq:hamiltonian} places the ensemble at the diffusive temperature scale: the noise amplitude in the reversible overdamped dynamics is independent of $N$.  In the usual log-gas convention, \eqref{eq:law} corresponds to the $N$-dependent inverse temperature $\bar\beta=\beta/N$.  From the random-matrix perspective, the diffusive temperature scale considered here is therefore a high-temperature regime.  By contrast, the classical random-matrix regime keeps $\bar\beta$ of order one: the pair exponent is $\bar\beta$ and the confinement coefficient is $N\bar\beta$.  In the resulting random-matrix ensemble, the particles occupy a region of macroscopic scale one and have bulk spacing $N^{-1/2}$, with order-one local repulsion after microscopic rescaling.  For these conventions and scales, see \cite[Chap.~1, \S1.1, \S1.3 and Chap.~3, \S3.2]{Serfaty2024Lectures}.

At first sight, the pair exponent $\beta/N$ in \eqref{eq:law} may suggest that the interaction is a small perturbation of the noninteracting Gaussian product measure, which has logarithmic Sobolev constant $\beta$ in the convention below, independently of $N$, by the Gaussian logarithmic Sobolev inequality \cite[(1.2)]{Gross1975}.  This is misleading for an $N$-uniform functional inequality.  The interaction is summed over $N(N-1)/2$ pairs, while its density factor relative to the Gaussian product measure vanishes at collisions and is unbounded above.  Thus, the bounded-perturbation principle \cite{HolleyStroock1987} is unavailable.  In addition, the Hessian of $\g=-\log|\cdot|$ is unbounded near collisions and indefinite away from them, so a direct uniform-convexity argument in the spirit of Bakry--\'Emery \cite[Prop.~3 and Cor.~2]{BakryEmery1985} does not apply.  The desired inequalities must moreover hold for arbitrary observables on the full labeled space, not only for functions of the empirical measure.

Let $\Delta_N$ denote the Vandermonde polynomial and let $a_N$ denote the pair exponent:
\begin{equation}\label{eq:collision-notation}
 \Delta_N(Z):=\prod_{1\le i<j\le N}(z_i-z_j),
 \qquad a_N:=\frac\beta N.
\end{equation}
Define the collision set and the collision-free configuration space by
\begin{equation}\label{eq:collision-set}
 \mathcal C_N:=\{Z\in\C^N:z_i=z_j\text{ for some }i\ne j\},
 \qquad \Omega_N:=\C^N\setminus\mathcal C_N=\{\Delta_N\ne0\}.
\end{equation}
For a probability measure $\mathsf P$ on a Euclidean space $\mathsf X$ and a nonnegative $g\in L^1(\mathsf P)$, define the entropy of $g$ with respect to $\mathsf P$ by
\begin{equation}
 \Ent_{\mathsf P}(g)
 :=\int_{\mathsf X} g\log g\,\D\mathsf P
 -\left(\int_{\mathsf X} g\,\D\mathsf P\right)
  \log\left(\int_{\mathsf X} g\,\D\mathsf P\right),
\end{equation}
with the convention $0\log0=0$.  For general background on entropy and logarithmic Sobolev inequalities, see \cite{aneInegalitesSobolevLogarithmiques2000,GuionnetZegarlinski2003}.  We write $\rho(\mathsf P)$ for the optimal logarithmic Sobolev constant in the convention
\begin{equation}\label{eq:lsi-convention}
 \Ent_{\mathsf P}(f^2)
 \le \frac2{\rho(\mathsf P)}\int_{\mathsf X}|\nabla f|^2\,\D\mathsf P.
\end{equation}
Using real Euclidean gradients on $\C^N\simeq\R^{2N}$, define the Dirichlet energy on $C_c^\infty(\Omega_N)$ by
\begin{equation}\label{eq:intro-form}
 \cE_{N,\beta}(f)
 :=\int_{\C^N}|\nabla f|^2\,\D\mathbb P_{N,\beta}.
\end{equation}
Define the associated form norm and closed Dirichlet-form domain by
\begin{equation}\label{eq:form-domain}
 \|f\|_{N,\beta}^2
 :=\int_{\C^N}|f|^2\,\D\mathbb P_{N,\beta}+\cE_{N,\beta}(f),
 \qquad
 \mathsf D_{N,\beta}
 :=\overline{C_c^\infty(\Omega_N)}^{\,\|\cdot\|_{N,\beta}}.
\end{equation}
We also write $\rho_{N,\beta}:=\rho(\mathbb P_{N,\beta})$.

\begin{theorem}[Uniform logarithmic Sobolev inequality]\label{thm:main}
For every fixed $\beta>0$, there exists $\rho_*(\beta)>0$ such that, for all $N\ge2$ and every real-valued $f\in\mathsf D_{N,\beta}$,
\begin{equation}\label{eq:main-lsi}
 \Ent_{\mathbb P_{N,\beta}}(f^2)
 \le \frac2{\rho_*(\beta)}\cE_{N,\beta}(f).
\end{equation}
Moreover, for every compact interval $I\Subset(0,\infty)$,
\begin{equation}\label{eq:compact-beta-rate}
 \inf_{\beta\in I}\inf_{N\ge2}\rho_{N,\beta}>0.
\end{equation}
\end{theorem}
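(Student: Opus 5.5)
The plan follows the route announced in the abstract: prove a full labeled Poincaré inequality and a defective logarithmic Sobolev inequality, each with constants uniform in $N$ and locally uniform in $\beta$, and combine them by Rothaus tightening. Recall Rothaus' lemma: if $\Ent(f^2)\le A\,\cE(f)+B\int f^2$ and $\Var(f)\le C\,\cE(f)$, then $\Ent(f^2)\le (A+(B+2)C)\,\cE(f)$. This step is routine once both inputs are available with explicit constants depending continuously on $\beta$, and that continuity gives \eqref{eq:compact-beta-rate}. Throughout, I will work with $f\in C_c^\infty(\Omega_N)$ and pass to $\mathsf D_{N,\beta}$ by density. Entropy is lower semicontinuous under $L^2$ convergence along a subsequence converging a.e., and $\cE_{N,\beta}$ is continuous in the form norm, so the inequality extends.

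\emph{Defective LSI.} Condition on all particles but one. The conditional law of $z_i$ is a planar Gaussian of variance of order $1/\beta$, weighted by $\prod_{j\ne i}|z_i-z_j|^{a_N}$, a product of positive powers of distances to points with total exponent $(N-1)a_N<\beta$. By the paper's second main result on weighted planar Gaussians, this one-site measure satisfies an LSI with constant depending only on $\beta$, uniformly in $N$ and in the positions of the other particles.

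To tensorize, I would use a conditional entropy inequality (approximate tensorization): $\Ent(f^2)\le K\sum_i \E[\Ent_i(f^2)]+(\text{defect})$. I would try to bound the defect, and the correlation between sites, via a repulsive partition estimate. This estimate controls $\log$ of the ratio of the conditional normalizations $\int e^{-\beta|z_i|^2/2}\prod_j|z_i-z_j|^{a_N}\,dz_i$ to a Gaussian reference. Since $a_N\log|z_i-z_j|$ summed over $j$ is $O(\beta)$ in an averaged sense, this should give a density ratio with $N$-uniform bounded entropy cost, producing constants $A$ and $B$ that depend only on $\beta$.

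\emph{Poincaré inequality.} Decompose $f=\bar f+(f-\bar f)$, where $\bar f$ is the symmetrization over $S_N$. For the symmetric part, note that $|\Delta_N|^{2a_N}=|\Delta_N^{a_N}|^2$ up to branch, so the density is $|h|^2e^{-\beta|Z|^2/2}$ for a locally holomorphic $h$. Writing $f\cdot h$ and integrating by parts gives a weighted $\bar\partial$ (Hörmander-type) estimate against the strictly plurisubharmonic weight $\beta|Z|^2/2$, whose complex Hessian is $\beta/2$ times the identity. Plurisubharmonicity of $-2a_N\log|\Delta_N|$ contributes with a favorable sign. This should yield $\Var(\bar f)\lesssim\beta^{-1}\cE(\bar f)$, and $\cE(\bar f)\le\cE(f)$ by convexity.

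For the label-dependent part, I would bound $\E|f-\bar f|^2$ by averaging $\E|f-f\circ\tau_{ij}|^2$ over transpositions, using the spectral gap of the random-transposition walk, which is order one after normalization. Each swap term is then controlled by conditioning on the relative coordinate $w=z_i-z_j$ with the centre of mass fixed. The conditional law in $w$ is a planar Gaussian times $|w|^{a_N}\prod_k|w/2\pm(c-z_k)|^{a_N}$, again covered by the weighted-Gaussian Poincaré inequality. Swapping $i$ and $j$ maps $w\mapsto -w$, so $|f(w)-f(-w)|^2$ is bounded by the $w$-Dirichlet energy. Summing over pairs, with the $1/N$ factor from random transpositions, gives a bound by $\cE(f)$.

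\emph{Main obstacle.} I expect the hardest step to be the defective LSI, specifically making the conditional entropy/tensorization inequality $N$-uniform despite the interaction of all $N^2$ pairs. The first thing to try is a Zegarlinski/Otto–Reznikoff-type criterion that bounds the mixed derivatives $\partial_{z_i}\partial_{z_j}$ of $a_N\log|z_i-z_j|$. These are of size $a_N|z_i-z_j|^{-2}$ and are singular, so they would need to be handled in an averaged sense using the repulsive partition estimate instead of pointwise.
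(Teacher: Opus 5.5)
\textbf{Matching parts.} Your outer structure matches the paper: a defective LSI plus a full labeled Poincar\'e inequality, combined by Rothaus. Your label-dependent argument also matches: random transpositions, conditioning on the relative coordinate at fixed centre and outside particles, the weighted Gaussian Poincar\'e inequality at total exponent $(2N-3)a_N<2\beta$, and cancellation of the $N^{-1}$ from the transposition gap against the $N$ from the complete-graph sum.

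\textbf{Main gap: the defective LSI.} You do not supply a tensorization mechanism with an $N$-uniform defect, and the two you sketch would fail. Comparing one-site normalizations with a Gaussian reference cannot work, because the $O(\beta)$ per-site corrections add up over $N$ sites. Concretely, the one-particle marginal of $\mathbb P_{N,\beta}$ approaches the thermal equilibrium $\mu_\beta$, not $\gamma^{(2)}_\beta$. Hence $\mathsf H(\mathbb P_{N,\beta}\mid(\gamma^{(2)}_\beta)^{\otimes N})$ grows linearly in $N$, and the defect produced around the Gaussian product is of order $N$. An Otto--Reznikoff/Zegarlinski criterion needs pointwise bounds on the mixed Hessian, and $a_N|z_i-z_j|^{-2}$ is unbounded; there is no ready-made averaged version. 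The paper's route uses five ingredients. (i) Take $\mu_\beta$ itself as product reference. Its Euler--Lagrange equation cancels every one-body term, giving $\D\mathbb P_{N,\beta}=\mathsf K_{N,\beta}(\mu_\beta)^{-1}e^{-\beta A_{N,\beta}}\D\mu_\beta^{\otimes N}$ with $A_{N,\beta}=N\mathsf F_N(X_N,\mu_\beta)$ the modulated energy (\Cref{prop:modulated}). (ii) Apply the Han--Shearer inequality $\sum_i\E_{\mathsf Q}[\mathsf H(\mathsf Q_i(\cdot\mid X_{-i})\mid\mu_\beta)]\ge\mathsf H(\mathsf Q\mid\mu_\beta^{\otimes N})$, i.e.\ exact tensorization against a product. (iii) Pass to the true one-site conditionals, which are tilts $e^{-\beta\Phi_i}$ of $\mu_\beta$, via Jensen on their normalizers and the counting identity $\sum_i(\Phi_i(x_i)-\int\Phi_i\,\D\mu_\beta)=2A_{N,\beta}+N^{-1}\sum_iU_\beta(x_i)$. (iv) Use Donsker--Varadhan with $\lambda=2$ to absorb the resulting $\beta\E_{\mathsf Q}[\,\cdot\,]$ term into $\frac12\mathsf H(\mathsf Q\mid\mathbb P_{N,\beta})$. (v) Bound the remaining defect by the Delgadino--Gvalani estimate $\sup_N\mathsf K_{N,t}(\mu_\beta)<\infty$ (\Cref{thm:partition}). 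This is an $N$-body exponential-moment bound on fluctuations of the empirical measure, not a statement about one-site normalizing constants.

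\textbf{Second gap: the permutation-invariant Poincar\'e step.} Your sign claim is wrong: $-a_N\log|\Delta_N|$ is pluriharmonic on $\Omega_N$ and has negative Levi current along $\mathcal C_N$. So the H\"ormander estimate must be run on the weakly pseudoconvex domain $\Omega_N$, where the curvature is exactly $\beta/2$. More importantly, that estimate only gives $\|f-Q_{N,\beta}f\|_2^2\le\frac1{2\beta}\cE_{N,\beta}(f)$, the distance to the weighted Bergman space of $\Omega_N$. It says nothing about $h=Q_{N,\beta}f$, so it does not yield $\Var(\sfS f)\lesssim\beta^{-1}\cE_{N,\beta}(f)$. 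You need mean-zero holomorphic and antiholomorphic functions to be orthogonal. The paper gets this only for symmetric $h$ and $a_N\le2$. Simple collision poles are odd under the swap, so symmetry kills them, and Hartogs extends across codimension two. Rotation invariance then gives $\int FG\,\D\mathbb P_{N,\beta}=F(0)G(0)$ for entire $F,G$, whence $Q_{N,\beta}\bar h=0$ and $\|h\|_2\le\|f-h\|_2$ (\Cref{lem:pole-removal,lem:phase,prop:symmetric-pi}). Without these restrictions the argument genuinely fails. The function $(z_1-z_2)^{-1}$ lies in $\mathcal A^2_{N,\beta}$ for every $a_N>0$, and the symmetric $\sum_{i<j}(z_i-z_j)^{-2}$ does once $a_N>2$ (for $N=2$ it pairs with $(z_1-z_2)^2$ to give $1$). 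The finitely many $N\le\beta/2$ therefore need a separate fixed-$N$ gap. The paper obtains it by compact resolvent, plus a compactness argument making it locally bounded in $\beta$.

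\textbf{Compact $\beta$-intervals.} For \eqref{eq:compact-beta-rate} you need local boundedness of every constituent on compact $\beta$-intervals, not continuity. For the partition bound, this comes from $\|\rho_\beta\|_{L^\infty}\le1/\pi$.
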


The proof of \cref{thm:main} yields an explicit positive lower bound in terms of the weighted one-particle, permutation-invariant, relative-coordinate, and partition constants introduced below; see \Cref{sec:tightening}.  The estimates are first proved on the collision-free core for the unregularized Gibbs measure \eqref{eq:law}.  \Cref{sec:model-domain} proves that every collision stratum has zero weighted capacity and that \eqref{eq:intro-form} closes to the form used in \Cref{thm:main}.  No ultraviolet regularization or cutoff-removal theorem enters the proof.  Importantly, \cref{thm:main} applies to the full labeled observable class. Finally, the control of the LSI constant is locally uniform in $\beta$, but the theorem does not provide uniform control on all of $(0,\infty)$ nor controlled asymptotics at either endpoint.

\subsection{The weighted one-particle theorem}\label{sec:intro-weighted}

The singular logarithmic interaction produces, after conditioning, planar Gaussian measures weighted by products of positive powers of distances to points.  Fix $\gamma>0$ and $Q<\infty$.  Let $y_1,\ldots,y_m\in\C$ and $a_1,\ldots,a_m\ge0$ satisfy the total exponent bound $\sum_ja_j\le Q$, and combine coincident points.  Denote by $\nu$ the weighted Gaussian probability measure, with normalizing constant $Z$, given by
\begin{equation}
 \D\nu(z)
 :=Z^{-1}e^{-\gamma|z|^2/2}
   \prod_{j=1}^m|z-y_j|^{a_j}\,\D z.
\end{equation}

The proof of \Cref{thm:main} requires estimates uniform over all point configurations arising from conditioning.  The following theorem provides exactly this: its constants depend only on the total variance $2\gamma^{-1}$ of the underlying Gaussian measure and the total exponent bound, not on the number, locations, or clustering of the points.

\begin{theorem}[Planar weighted Gaussian inequalities]\label{thm:weighted}
There exist finite constants $C_{\rm wt}(\gamma,Q)$ and $\rho_{\rm wt}(\gamma,Q)>0$, depending only on $(\gamma,Q)$, such that every such measure $\nu$ satisfies
\begin{align}
 \Var_\nu(f)&\le C_{\rm wt}(\gamma,Q)\int_{\C}|\nabla f|^2\,\D\nu,\label{eq:weighted-pi}\\
 \Ent_\nu(f^2)&\le\frac2{\rho_{\rm wt}(\gamma,Q)}\int_{\C}|\nabla f|^2\,\D\nu.\label{eq:weighted-lsi}
\end{align}
The inequalities hold for every real-valued $f$ in the closed Dirichlet-form domain generated by $C_c^\infty(\C\setminus\{y_j:a_j>0\})$.  The constants are uniform in the number and locations of the points and in the individual exponents, subject only to this total-exponent bound.
\end{theorem}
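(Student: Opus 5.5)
We prove \Cref{thm:weighted} by reducing to a scale-free statement and then using a Lyapunov-function (or perturbation) argument that controls the weight near clusters of points. First, by the scaling $z\mapsto \gamma^{-1/2}z$, we may assume $\gamma=1$: gradients pick up a factor $\gamma^{-1/2}$, and the points $y_j$ are rescaled, but their number and locations are arbitrary anyway. Write $\D\nu=Z^{-1}e^{-V}\D z$ with
\[
V(z)=\tfrac12|z|^2-\sum_j a_j\log|z-y_j|.
\]
The key structural fact is that $-\log|z-y_j|$ is superharmonic away from $y_j$, and in fact $\Delta(-a_j\log|z-y_j|)=-2\pi a_j\delta_{y_j}\le0$. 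Consequently $V$ is subharmonic as a distribution with $\Delta V\ge 2$ (up to a negative singular part concentrated at points where the density vanishes). Convexity fails, so Bakry--\'Emery does not apply directly. We therefore proceed in three steps.

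\emph{Step 1 (localization of mass).} We show that $\nu$ is concentrated, uniformly, in a disc $D_R$ whose radius $R=R(Q)$ is measured relative to a center $c$, namely the barycenter of the weighted points or the mean of $\nu$. To do this, compare $\prod|z-y_j|^{a_j}\le (|z-c|+\max_j|y_j-c|)^{Q}$. The difficulty is that the points may be far from the origin. However, the Gaussian forces the relevant points to matter only when they are within $O(\sqrt{Q\log})$ of the region where $\nu$ lives, since points far away contribute a nearly constant, slowly varying factor. Concretely, we will obtain a Lyapunov function $W=e^{\epsilon|z-c|^2}$ satisfying $LW\le -\theta W+b\1_{D_R}$ for $L=\Delta-\nabla V\cdot\nabla$. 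This uses $\nabla V\cdot(z-c)\ge |z-c|^2-Q-|c|\,|z-c|$, together with a choice of $c$ that handles the shift. This is the standard Bakry--Barthe--Cattiaux--Guillin route.

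\emph{Step 2 (local Poincar\'e inequality on $D_R$).} On the bounded region, the weight $|z-y|^{a}$ with $a\ge0$ is an $A_2$-type weight. More usefully, it is a product of radially increasing powers. We prove a Poincar\'e inequality for $e^{-V}\1_{D_R}$ uniformly in configurations with total exponent $\le Q$. The plan here is to use the superharmonic structure: since $\log$ of the density is $-V$ and $\Delta(-V)\le -2+0$ away from the points, Brascamp--Lieb or a Hörmander $L^2$ $\bar\partial$ estimate applies. Indeed, for subharmonic weights $\varphi=V$ with $\Delta\varphi\ge 2$ (in the sense of currents; the point masses only help), Hörmander's estimate gives, for the solution $u$ of $\bar\partial u=g$ minimal in $L^2(e^{-V})$,
\[
\int|u|^2e^{-V}\le \int |g|^2\,\frac{e^{-V}}{\Delta V/4}.
\]
Applying this with $g=\bar\partial f$ and using that holomorphic $L^2(e^{-V})$ functions must be handled separately is the natural plan. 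Since $\Delta V\ge2$ globally as a measure, this could even give the Poincar\'e inequality \eqref{eq:weighted-pi} directly for the $\bar\partial$ part with constant $\sim 2$. The remaining obstacle is the holomorphic projection, namely functions $f$ whose $\bar\partial f$ vanishes. For real-valued $f$, however, $\bar\partial f=\overline{\partial f}$, so $|\nabla f|^2=4|\bar\partial f|^2$ and the holomorphic part of a real $f$ is controlled. I would try first to make this $\bar\partial$ argument give \eqref{eq:weighted-pi} with $C_{\rm wt}$ depending only on $\gamma$. Step 1 is then the fallback if the Hörmander duality loses control of constants on the real/imaginary splitting.

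\emph{Step 3 (LSI).} From the Poincar\'e inequality, we upgrade to the LSI. We use the Lyapunov condition with $W=e^{\epsilon|z-c|^2}$ (Cattaiux--Guillin--Wu criterion: $LW\le(-\theta|z-c|^2+b)W$), combined with a local LSI on $D_R$. The local LSI follows by Holley--Stroock from the Gaussian restricted to the disc, provided the oscillation of $\sum a_j\log|z-y_j|$ is bounded. It is not bounded near the points, which is the main obstacle. To handle it, we use the fact that the measure $|z-y|^a\D z$ on a disc is a pushforward, in polar coordinates, of a measure that satisfies an LSI with constant depending only on $a$ and the radius. We then apply a bounded-perturbation argument only to the far points. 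A cleaner alternative, which I would attempt first, is a defective LSI obtained from the Gaussian LSI for $\sqrt{\text{density}}\,f$, controlling the cross term $\int f^2|\nabla V_{\rm int}|^2$ via the singular Hardy-type inequality $\int f^2|z-y|^{-2}\,|z-y|^{a}\D z\lesssim_a \int|\nabla f|^2|z-y|^{a}\D z$. This inequality holds since $a>0$ on the collision-free core, but it degenerates as $a\to0$. Rothaus' lemma then tightens the defective LSI together with \eqref{eq:weighted-pi} to \eqref{eq:weighted-lsi}. The closure to the form domain is routine: the points have zero capacity for these weights.
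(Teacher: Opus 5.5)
Your reduction to $\gamma=1$, the final Rothaus tightening, and the capacity closure match the paper. However, none of your three routes delivers uniformity in the point configuration, and that uniformity is the entire content of the theorem.

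The $\bar\partial$ route of Step~2 breaks at the holomorphic part. For real $f$, the identity $|\nabla f|^2=4|\bar\partial f|^2$ lets the energy control only $f-Pf$, where $P$ is the weighted Bergman projection. It gives nothing for $Pf-\E_\nu f$. Moreover, $\Delta V=2-2\pi\sum_ja_j\delta_{y_j}$ has \emph{negative} point masses, so they do not ``help''. Hörmander's estimate must therefore be run on the punctured plane. The Bergman space there contains $(z-y_j)^{-m}$ for every $m<1+a_j/2$, in particular a simple pole at each point. Without the rotation invariance behind \Cref{lem:phase} and the pole removal of \Cref{lem:pole-removal}, nothing makes $P\bar h$ small for $h=Pf$. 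In fact no constant depending only on $\gamma$ can exist: for $\D\nu\propto e^{-|z|^2/2}|z|^Q\,\D z$ and $f=\operatorname{Re}z$, one has $\int_\C|\nabla f|^2\,\D\nu=1$ but $\Var_\nu(f)=\frac12\E_\nu|z|^2=\frac{Q+2}{2}$.

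The Lyapunov fallback of Steps~1 and~3 rests on a false drift bound. With $\gamma=1$,
\[
 (z-c)\cdot\nabla V(z)=(z-c)\cdot z-q-\sum_ja_j\frac{(y_j-c)\cdot(z-y_j)}{|z-y_j|^2},
\]
and the last sum is unbounded near every $y_j\ne c$. Approaching $y_j$ from the side facing away from $c$ sends $(z-c)\cdot\nabla V\to-\infty$, hence $LW/W\to+\infty$ for $W=e^{\epsilon|z-c|^2}$. Points may lie arbitrarily far out, and two distant points in different directions defeat every choice of $c$. So neither $LW\le-\theta W+b\1_{D_R}$ with $R=R(\gamma,Q)$ nor your $LW\le(-\theta|z-c|^2+b)W$ can hold in general. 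These bad sets are small (area $\lesssim q^2R^{-2}$ at radius $R$, by \Cref{lem:weak-field}), but a pointwise criterion cannot exploit smallness. One needs a mechanism that prevents $H^1$ mass from concentrating there. The paper supplies it by combining the ground-state identity \eqref{eq:ground-state} with Ladyzhenskaya's inequality to prove the averaged quadratic-moment bound \eqref{eq:U-bound}.

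Your Hardy alternative leads to the same point. Since $\log w$ is harmonic off the points, the ground-state identity already gives $\int f^2|H|^2w\,\D z\le4\int|\nabla f|^2w\,\D z$ on the core for every configuration, so the $a\to0$ degeneracy is not the real obstruction. But inserting the Gaussian leaves $\int|z|^2f^2\,\D\nu$ to be controlled, and that bound is equivalent to \eqref{eq:U-bound}.

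Finally, your local step has no uniform mechanism. The weight $|z-y|^a$ lies in $A_2(\R^2)$ only for $a<2$, clusters create local exponents up to $Q$, polar coordinates treat a single point, and Holley--Stroock fails near the zeros. The paper obtains configuration-independent local Sobolev--Poincar\'e constants from the Bonk--Lang quasiconformal parametrization with $J_F\simeq w$. This yields the strong-$A_\infty$ property and then Bj\"orn's theorem (\Cref{lem:strong-Ainf}). The paper then reaches the defective LSI through the Gaussian-rate super-Poincar\'e inequality of \Cref{prop:super-pi}.
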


The proof of \Cref{thm:weighted} separates local control near the point singularities from global control at infinity supplied by the Gaussian confinement.  The local argument draws on planar quasiconformal geometry and strong-$A_\infty$ weights \cite{BonkLang2003,DavidSemmes1990,Semmes1993,Bjorn2001}.  These tools control the behavior near the zeros of the weight, uniformly in the point configuration; see \Cref{lem:strong-Ainf,cor:local-gauss}.  Uniform normalization and Gaussian tails are established in \Cref{lem:norm-tail}.  The remaining global control comes from a quadratic-moment estimate: on an annulus of radius $R$, the Gaussian score has size $\gamma R$, while the logarithmic force field can cancel it only on a set controlled by a weak-$L^2$ bound.  The weak-field estimate and planar Ladyzhenskaya inequality prevent an $H^1$ function from concentrating enough mass on that exceptional set; see \Cref{lem:weak-field,lem:lady,prop:uncertainty}.  These ingredients yield the global Poincar\'e and Gaussian-rate super-Poincar\'e inequalities\footnote{Whereas the Poincar\'e inequality controls variance by a fixed multiple of the Dirichlet energy, a super-Poincar\'e inequality controls the full squared $L^2$ norm, for every scale $s>0$, by $s$ times the energy plus $\beta(s)$ times the squared $L^1$ norm.  The small-$s$ behavior of $\beta$ is called the rate; the Gaussian rate proved below yields a defective logarithmic Sobolev inequality.  See \cite[\S3.3]{Wang2005} and \cite[(1.3)]{CGWW2009}.} in \Cref{prop:weighted-pi,prop:super-pi}; the latter gives a defective logarithmic Sobolev inequality, which is tightened using the former and the Rothaus inequality \eqref{eq:rothaus-centering} \cite{Rothaus1985} to obtain \eqref{eq:weighted-lsi}.

\subsection{Proof strategy for the main theorem}\label{sec:intro-method}

The proof of \Cref{thm:main} has two independent branches: a full labeled Poincar\'e inequality and a defective logarithmic Sobolev inequality.  \Cref{thm:weighted} is used twice in the proof: for the one-site conditional logarithmic Sobolev inequality in the entropy branch and for the relative-coordinate conditional Poincar\'e inequality in the Poincar\'e branch.  Rothaus tightening combines the two branches at the end in \Cref{sec:tightening}.

The Poincar\'e branch begins with averaging over particle labels.  Because $\mathbb P_{N,\beta}$ is exchangeable, permutation symmetrization of an observable agrees with conditional expectation given the unordered configuration, and variance splits orthogonally into a permutation-invariant part and a part retaining label information.  A weighted $\bar\partial$ estimate in the framework of H\"ormander and Demailly \cite{Hormander1965,DemaillyCADG}, together with permutation symmetry, controls the first term.  For the second, fix $i<j$, write $c=(z_i+z_j)/\sqrt2$ and $u=(z_i-z_j)/\sqrt2$ for the orthonormal center and relative coordinates, respectively, and set $A_k:=c/\sqrt2-z_k$ for $k\notin\{i,j\}$.  Freezing $c$ and all remaining particles gives a relative-coordinate density proportional to
\begin{equation}
 e^{-\beta|u|^2/2}
 \left|u\prod_{k\ne i,j}\left(A_k^2-\frac{u^2}{2}\right)\right|^{\beta/N}\,\D u.
\end{equation}
The polynomial has degree $2N-3$, and hence total exponent $\beta(2N-3)/N<2\beta$.  Thus, the Poincar\'e inequality in \Cref{thm:weighted} applies uniformly in the frozen configuration; the factorization and resulting estimate are proved in \Cref{lem:relative-poly,prop:swap}.  The spectral gap of the random-transposition chain \cite[(1.1), Cor.~4, and Rem.~1]{DiaconisShahshahani1981} then reduces the label-dependent variance to a sum over particle pairs.  Summing the corresponding relative-coordinate gradients contributes a factor $N$, which cancels the factor $N^{-1}$ from the transposition inequality and yields a bound independent of $N$.  This branch is carried out in \Cref{sec:full-pi}.

The orthogonal projection identity for variance does not itself furnish a corresponding decomposition of entropy.  The defective logarithmic Sobolev inequality is therefore proved without symmetrizing the density.  Freezing all variables except $z_i$ gives a one-site conditional density proportional to
\begin{equation}
 e^{-\beta|z|^2/2}\prod_{j\ne i}|z-z_j|^{\beta/N},
\end{equation}
whose total exponent is $\beta(N-1)/N<\beta$.  The logarithmic Sobolev inequality in \Cref{thm:weighted} therefore controls the sum of one-site conditional entropies uniformly in the conditioned configuration.  A relative-entropy form of Han--Shearer (see Han \cite{Han1978} and, for the Polish product-space formulation used here, Madiman and Tetali \cite[Thm.~V, Cor.~VII, and (21)--(22)]{MadimanTetali2010}) compares that sum with the product entropy.  The exact representation \eqref{eq:modulated} of the canonical Gibbs measure relative to the product thermal-equilibrium measure then isolates an interaction term controlled by the uniform repulsive partition estimate \Cref{thm:partition}, which is a consequence of a recent bound of Delgadino and Gvalani \cite[Theorem~2.5 and Appendix~A]{DelgadinoGvalani2025}.  This produces the defective inequality \Cref{cor:defective} for arbitrary labeled densities; see \Cref{sec:defective}.  The full labeled Poincar\'e inequality is not used in that argument and enters only in the final Rothaus centering step of \Cref{sec:tightening}.

\subsection{Relation to previous work and scope}\label{sec:intro-related}

Uniform logarithmic Sobolev inequalities for regular mean-field particle systems have been obtained through conditional criteria, flat convexity, transport, and free-energy methods; see, among others, \cite{OttoReznikoff2007,GuillinLiuWuZhang2022,DelgadinoGvalaniPavliotisSmith2023,KookZhangChewiErdogduLi2024,Wang2024,ChewiNitandaZhang2024,Monmarche2024,BauerschmidtBodineauDagallier2025}.  These results motivate the outer defective-LSI-plus-Poincar\'e structure.  Their regularity assumptions or worst-case conditional hypotheses, however, do not directly include the unregularized logarithmic interaction, whose gradient and Hessian diverge at collisions and whose Hessian is indefinite away from them.

Several works studied planar log gases at the same high-temperature or diffusive scale \cite{GarciaZelada2019,Lambert2021,AkemannByun2019}; their results concerned equilibrium asymptotics rather than Poincar\'e or logarithmic Sobolev inequalities.

Strong functional inequalities for one-dimensional log and Riesz gases with singular interactions were obtained through ordering, chamber convexity, and one-dimensional transport or rigidity \cite{ChafaiLehec2020,GuillinLeBrisMonmarche2023}.  The author and Serfaty also proved an $N$-uniform logarithmic Sobolev inequality for the one-dimensional Riesz gas under uniformly convex confinement, together with a modulated version for prescribed-reference Gibbs measures whenever the associated effective confinement remains uniformly convex \cite[Props.~3.3--3.4]{RosenzweigSerfaty2025}.  The modulated inequality then enters their criterion for generation of chaos \cite[Thm.~1.3 and \S3.2]{RosenzweigSerfaty2025}.  These one-dimensional arguments rely on ordering.  In the plane, particles can pass around one another without crossing the collision set, so there is no analogous global chamber decomposition.

For the quadratically confined planar Coulomb Gibbs measures defined in \eqref{eq:law} (equivalently, the invariant measures in \cite[Eq.~(1.11)]{BCF2018} with their inverse-temperature parameter $\beta_N$ set to $\beta N/2$), Bolley, Chafa\"i, and Fontbona proved a fixed-particle-number Poincar\'e inequality by an exponential-energy Lyapunov argument and proposed a Lyapunov/local-super-Poincar\'e route to a fixed-particle-number logarithmic Sobolev inequality, while noting the collision-domain difficulty \cite[Thm.~1.3, Lem.~5.2, and \S1.4.5]{BCF2018}.  The abstract implication was supplied by Cattiaux and Guillin \cite[Prop.~3.5]{CattiauxGuillin2017}.  The author's companion exposition \cite[Prop.~2.1]{RosenzweigBlogPartI} sketched how zero weighted capacity of the collision strata addresses this domain obstruction and how the resulting defective inequality is tightened using the fixed-particle-number Poincar\'e inequality; this argument gives no control uniform in the particle number.  Lu and Mattingly developed a collision-sensitive Lyapunov construction for kinetic Coulomb dynamics, explicitly motivated by the overdamped limit, where they observed that the potential energy is a natural Lyapunov function \cite[Eqs.~(1.5)--(1.6), \S3.1, Thm.~2.5, and Prop.~2.7]{LuMattingly2020}.  They proved kinetic weighted-total-variation geometric ergodicity, but the same collision-coercivity mechanism informs the fixed-particle-number overdamped LSI route.

Recent independent preprints of Suzuki and Chafa\"i determined the sharp finite-particle Poincar\'e inequality for the complex Ginibre ensemble on the unlabelled, equivalently permutation-invariant, sector; Suzuki also determined the exact infinite-particle unlabelled spectral gap \cite{Suzuki2026Ginibre,Chafai2026Ginibre}.  After the standard spatial rescaling to unit macroscopic support, the Ginibre measure corresponds to the low-temperature choice $\beta_N=2N$ in the normalization \eqref{eq:law}, rather than to fixed diffusive inverse temperature.  Their arguments used a Vandermonde ground-state transform, holomorphic projection, and a sharp Gaussian $\bar\partial$ estimate, closely related to the complex-analytic ingredient used below.  The regimes and conclusions are nevertheless different: these works proved sharp Poincar\'e inequalities for symmetric observables at Ginibre scaling, whereas \Cref{thm:main} is a logarithmic Sobolev inequality for every fixed $\beta>0$ and every labeled observable.\footnote{Coincidentally, related conclusions were obtained by the author with AI assistance in unpublished notes \cite{RosenzweigFiniteGinibreNotes2026,RosenzweigInfiniteGinibreNotes2026} shortly before the announcement of these papers.}

In an expository predecessor to this paper \cite{RosenzweigBlogPartI}, we described a perturbative proof of the same full labeled uniform-in-$N$ logarithmic Sobolev inequality at small inverse temperature, together with a refinement reaching $0<\beta<1$.  The finite range there came from reciprocal-weight restrictions in the one-site and relative-coordinate conditional inequalities.  \Cref{thm:weighted} removes precisely those two restrictions by allowing arbitrary finite total exponent.  The weighted $\bar\partial$ estimate, random-transposition argument, conditional entropy inequality, partition bound, and Rothaus tightening retain the same structural roles.  The present paper is logically independent of that exposition and does not use it as a mathematical input.

The principal new analytic input in the proof of \Cref{thm:main} is \Cref{thm:weighted}, which is uniform over arbitrary finite point configurations at every finite total exponent.  The many-particle contribution is to use this theorem in the relative-coordinate Poincar\'e and one-site logarithmic Sobolev steps and to combine the resulting permutation-invariant and label-dependent Poincar\'e estimates with a direct conditional-entropy argument for the unregularized Gibbs measure on the full labeled space.  The other ingredients are established tools whose exact roles are recorded at their points of use.

Our main theorem concerns the unregularized whole-space 2D Coulomb gas with logarithmic interaction and isotropic quadratic confinement, with diffusive pair exponent $\beta/N$, for every fixed $\beta>0$ and all labeled observables.  The proof relies essentially on planar strong-$A_\infty$ geometry, the weak-$L^2$/Ladyzhenskaya pairing, scalar complex factorization of the relative-coordinate conditionals, and pluriharmonicity of the logarithmic Vandermonde away from collisions.  It gives no automatic extension to general confinement, prescribed-reference ensembles, higher-dimensional Coulomb interactions, Riesz kernels, periodic models, optimal constants, a single logarithmic Sobolev constant uniform on the whole positive temperature half-line, or controlled endpoint asymptotics.  The present theorem is one nonperturbative component of a broader program on uniform logarithmic Sobolev inequalities and related functional inequalities for logarithmic/Riesz gases in the Hilbert--Schmidt regime.  The remaining parts of that program require different arguments; we discuss them in more detail in \Cref{sec:hilbert-schmidt-program}.

\subsection{Organization of the paper}\label{sec:intro-organization}

\Cref{sec:model-domain} establishes the closed Dirichlet form and center-of-mass factorization.  \Cref{sec:weighted} proves \Cref{thm:weighted}.  \Cref{sec:full-pi} proves the Poincar\'e inequality on the full labeled space.  \Cref{sec:defective} proves the defective logarithmic Sobolev inequality, and \Cref{sec:tightening} completes the proof of \Cref{thm:main}.  \Cref{sec:discussion} places the theorem in the broader Coulomb/Riesz functional-inequality program.

\subsection{Statement on AI use}\label{sec:ai-use}

The author used generative AI tools (OpenAI's ChatGPT and Codex) during the development and preparation of this paper to identify potentially relevant literature, explore and test mathematical arguments, improve the exposition, and check internal consistency and cross-references.  The author treated all AI-generated output as provisional material requiring independent verification and did not rely on it as an authority, checking literature suggestions against the relevant sources and independently working through mathematical suggestions before deciding whether to incorporate them into the paper.  The author made all final decisions concerning the manuscript and takes full responsibility for its contents.

\section{Model and singular Dirichlet form}\label{sec:model-domain}\label{sec:domain}

This section establishes the analytic domain for the Gibbs measure associated with the singular logarithmic interaction.  We first prove finiteness of the partition function, closability on the collision-free core, and zero weighted capacity of the collision set, thereby identifying the closed Dirichlet-form domain.  We then separate the Gaussian center-of-mass coordinate from the relative coordinates.

\subsection{The closed Dirichlet form}

\begin{proposition}[Partition function and local closability]\label{prop:wellposed}
For every fixed $N\ge2$ and $\beta>0$, $0<\mathsf{Z}_{N,\beta}<\infty$.  The form $\cE_{N,\beta}$ defined in \eqref{eq:intro-form} on $C_c^\infty(\Omega_N)$ is closable in $L^2(\mathbb P_{N,\beta})$.
\end{proposition}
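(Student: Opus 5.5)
For the partition function I will bound the density above and below separately. Positivity is immediate: the density $e^{-\beta|Z|^2/2}\prod_{i<j}|z_i-z_j|^{a_N}$ is continuous and strictly positive on the nonempty open set $\Omega_N$. For finiteness, the exponent $a_N=\beta/N$ is positive, so each factor satisfies $|z_i-z_j|^{a_N}\le (|z_i|+|z_j|)^{a_N}\le e^{a_N(|z_i|+|z_j|)}$. Hence
\begin{equation*}
 \prod_{i<j}|z_i-z_j|^{a_N}\le \prod_{i}e^{a_N(N-1)|z_i|}\le \prod_i e^{\beta|z_i|}.
\end{equation*}
The integrand is therefore dominated by $\prod_i e^{-\beta|z_i|^2/2+\beta|z_i|}$, which is integrable on $\C^N$. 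This gives $\mathsf Z_{N,\beta}<\infty$. The repulsive factor only vanishes, so no local singularity has to be handled.

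For closability I will use the standard Hamza-type criterion: a form $\int|\nabla f|^2\rho\,\D x$ on $C_c^\infty(U)$ is closable in $L^2(\rho\,\D x)$ whenever $\rho$ is locally bounded away from zero on the open set $U$. Here $U=\Omega_N$, and the unnormalized density $\rho(Z)=e^{-\beta|Z|^2/2}|\Delta_N(Z)|^{a_N}$ is smooth and strictly positive there. In particular, for every compact $K\subset\Omega_N$ we have $\inf_K\rho>0$ and $\sup_K\rho<\infty$.

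I will verify this directly rather than cite it. Let $f_n\in C_c^\infty(\Omega_N)$ with $f_n\to0$ in $L^2(\mathbb P_{N,\beta})$, and suppose $\nabla f_n$ is Cauchy in $L^2(\mathbb P_{N,\beta};\R^{2N})$ with limit $G$. We must show $G=0$ a.e.
\begin{itemize}
\item On any compact $K\subset\Omega_N$, the two-sided bounds on $\rho$ make $L^2(\mathbb P_{N,\beta}|_K)$ and $L^2(\D Z|_K)$ equivalent. So $f_n\to0$ and $\nabla f_n\to G$ in unweighted $L^2_{\rm loc}(\Omega_N)$.
\item For any test field $\phi\in C_c^\infty(\Omega_N;\R^{2N})$, integrate by parts:
\begin{equation*}
 \int G\cdot\phi\,\D Z=\lim_n\int\nabla f_n\cdot\phi\,\D Z=-\lim_n\int f_n\,\mathrm{div}\,\phi\,\D Z=0.
\end{equation*}
\item Hence $G=0$ Lebesgue-a.e. on $\Omega_N$.
\item Since $\mathcal C_N$ is a finite union of real-codimension-two subspaces, it is Lebesgue-null. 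Therefore $G=0$ in $L^2(\mathbb P_{N,\beta})$, which proves closability.
\end{itemize}

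I expect no real obstacle in this proposition. The only points needing care are the elementary growth bound, which uses $a_N>0$ to trade the polynomial for a subGaussian factor, and the observation that local two-sided bounds on $\rho$ hold only on compacts inside $\Omega_N$. The latter is exactly why the core is taken to be $C_c^\infty(\Omega_N)$. The harder question of identifying the form domain with $H^1$-type functions across collisions (zero capacity) is deferred to the subsequent results and is not needed here.
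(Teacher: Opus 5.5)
Your proof is correct and follows the paper's argument. Finiteness comes from a pointwise growth bound on the Vandermonde factor plus Gaussian integrability, positivity from strict positivity of the density on an open set, and closability from two-sided density bounds on compacts of $\Omega_N$ together with testing the limiting gradient against compactly supported vector fields. The only differences are cosmetic: you use the exponential bound $x^{a_N}\le e^{a_N x}$ where the paper uses $|z_i-z_j|\le(1+|z_i|)(1+|z_j|)$, and you state explicitly that $\mathcal C_N$ is Lebesgue-null, which the paper leaves implicit.
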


\begin{proof}
Since
\begin{equation}
 |z_i-z_j|\le(1+|z_i|)(1+|z_j|),
\end{equation}
we have
\begin{equation}
 \prod_{i<j}|z_i-z_j|^{\beta/N}
 \le\prod_{i=1}^N(1+|z_i|)^{\beta(N-1)/N},
\end{equation}
and Gaussian integrability gives finiteness.  Positivity follows by integrating over a product of disjoint balls.

On every compact subset of $\Omega_N$, the density is smooth, positive, and bounded above and below.  If $f_k\to0$ in $L^2(\mathbb P_{N,\beta})$ and $\nabla f_k$ is Cauchy, testing the limiting vector field against compactly supported smooth vector fields on $\Omega_N$ shows that it vanishes.  Exhaustion of $\Omega_N$ gives closability.
\end{proof}

\leavevmode Local closability on the collision complement does not yet show that removing the collision set leaves the closed form unchanged.  The next proposition supplies this identification by proving zero weighted capacity.

\leavevmode Here and below, $\operatorname{Cap}_{\cE_{N,\beta},1}$ denotes the variational $1$-capacity associated with the closed form, using the form norm in \eqref{eq:form-domain}; see \cite[(2.1.1)--(2.1.3) and Lem.~2.1.1]{FukushimaOshimaTakeda2011}.

\begin{proposition}[Localized collision capacity and common core]\label{prop:capacity}
For fixed $N$ and $\beta$, every collision stratum, and hence the full collision set $\mathcal C_N$, has zero $(\cE_{N,\beta},1)$-capacity.  The closure of $C_c^\infty(\Omega_N)$ agrees with the closure obtained from compactly supported smooth functions on $\C^N$.
\end{proposition}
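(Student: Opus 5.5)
The plan is to build explicit cutoff functions that are $1$ near a given compact piece of a collision stratum, vanish away from it, and have form norm tending to zero. Zero capacity of $\mathcal C_N$ then follows by countable subadditivity, and the identification of the two closures follows by multiplying by one minus the cutoff.

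\textbf{Step 1: reduce to a local model.} It suffices to show that, for each pair $i<j$ and each compact $K\subset\C^N$, the set $K\cap\{z_i=z_j\}$ has zero capacity, since $\mathcal C_N$ is a countable union of such sets. Near a point of $\{z_i=z_j\}$, use the distance $r:=|z_i-z_j|$. The density of $\mathbb P_{N,\beta}$ on a neighborhood is bounded above by $C\,r^{a_N}$ times a locally bounded factor: the other factors $|z_k-z_l|^{a_N}$ are bounded on compacts, and the Gaussian is bounded. Points of higher collision strata are covered by the same bound, because the remaining factors are bounded above and we only need upper bounds on the density for capacity estimates.

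\textbf{Step 2: log-log cutoff.} Let $\chi\in C_c^\infty(\C^N)$ equal $1$ on a neighborhood of $K$. For $0<\varepsilon<1/4$, take
$$\psi_\varepsilon(Z):=\chi(Z)\,\eta\Bigl(\tfrac{\log(1/r)}{\log(1/\varepsilon)}\Bigr),$$
with $\eta$ smooth, $\eta=0$ on $(-\infty,1/2]$ and $\eta=1$ on $[1,\infty)$. Then $\psi_\varepsilon=1$ where $r\le\varepsilon$, and $\psi_\varepsilon$ is supported in $r\le\varepsilon^{1/2}$. This is the standard argument that a real-codimension-two set has zero capacity. The gradient bound is
$$|\nabla\psi_\varepsilon|\lesssim \frac{1}{r\log(1/\varepsilon)}+|\nabla\chi|.$$
Integrating in the two-dimensional relative variable $u=z_i-z_j$ with weight $r^{a_N}\le 1$ near $0$ gives
$$\int_{\varepsilon\le r\le\sqrt\varepsilon}\frac{r^{a_N}\,r\,dr}{r^2\log^2(1/\varepsilon)}\lesssim\frac{1}{\log(1/\varepsilon)}\to0.$$
The $L^2$ term and the $\nabla\chi$ term are $O(\mathbb P_{N,\beta}(r\le\sqrt\varepsilon))\to0$. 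In fact, since $a_N>0$, even the simpler cutoff $\eta(r/\varepsilon)$ would work: it gives energy $\sim\varepsilon^{a_N}$. The functions $\psi_\varepsilon$ are Lipschitz and compactly supported, so they lie in the closed domain generated by $C_c^\infty(\C^N)$, after mollifying if needed.

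\textbf{Step 3: common core.} The inclusion $\overline{C_c^\infty(\Omega_N)}\subset\overline{C_c^\infty(\C^N)}$ is immediate. For the reverse, take $f\in C_c^\infty(\C^N)$ and cover $\operatorname{supp}f\cap\mathcal C_N$ using Step 2 applied to each pair $i<j$. Set
$$\Psi_\varepsilon:=1-\prod_{i<j}\bigl(1-\psi^{ij}_\varepsilon\bigr).$$
Then $f(1-\Psi_\varepsilon)$ is Lipschitz and compactly supported in $\Omega_N$; mollifying inside $\Omega_N$ puts it in $\overline{C_c^\infty(\Omega_N)}$, with the density bounded above and below on its support, as in Proposition \ref{prop:wellposed}. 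Moreover,
$$\|f\Psi_\varepsilon\|_{N,\beta}\le\|f\|_{W^{1,\infty}}\Bigl(\sum_{i<j}\|\psi^{ij}_\varepsilon\|_{N,\beta}+\ldots\Bigr)\to0,$$
so $f\in\overline{C_c^\infty(\Omega_N)}$. Zero capacity of each stratum then follows from the standard variational definition of capacity in \cite{FukushimaOshimaTakeda2011}, applied to open neighborhoods $\{r<\varepsilon\}$.

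\textbf{Main obstacle.} I expect the only delicate point to be uniform control near higher-order collisions, where several pairs coincide simultaneously. There the factorized upper bound, with only one vanishing factor kept and all others bounded by constants, handles it, since the upper bound on the density is all that is needed.
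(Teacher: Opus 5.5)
Your proposal is correct and follows essentially the paper's route: pair-collision cutoffs in the normal variable $z_i-z_j$ using only the local upper bound $C|z_i-z_j|^{a_N}$ on the density, a product over all pairs multiplied by $\chi$ (or by $f$), countable subadditivity, and removal of the strata from a $C_c^\infty(\C^N)$ function with vanishing form error. The differences are cosmetic. The paper uses the linear-scale cutoff with energy $O(\varepsilon^{a_N})$, which is the variant you note, whereas your primary logarithmic cutoff gives energy $O(1/\log(1/\varepsilon))$ and does not need $a_N>0$. You also establish the common core before deducing zero capacity, while the paper proves zero capacity first.
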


\begin{proof}
Fix a pair $i<j$, a compact set $K$, and let $\alpha=\beta/N$.  On a compact neighborhood $K'$ of $K$, all factors except $|z_i-z_j|^\alpha$ are bounded above, hence
\begin{equation}
 \D \mathbb P_{N,\beta}\le C_{K'}|z_i-z_j|^\alpha\,\D Z.
\end{equation}
Choose $\phi_\varepsilon(r)=0$ for $r\le\varepsilon$, $\phi_\varepsilon(r)=1$ for $r\ge2\varepsilon$, and $|\phi_\varepsilon'|\le C/\varepsilon$.  In the normal complex coordinate $u=(z_i-z_j)/\sqrt2$,
\begin{equation}\label{eq:pair-cutoff-energy}
 \int_{K'}|\nabla\phi_\varepsilon(|z_i-z_j|)|^2\,\D \mathbb P_{N,\beta}
 \le C_{K',N,\beta}\varepsilon^\alpha.
\end{equation}
The corresponding $L^2$ mass is $O(\varepsilon^{\alpha+2})$.

Let $K\cap\mathcal C_N$ be a compact collision set and choose $\chi\in C_c^\infty(\C^N)$ equal to one on a neighborhood of $K$.  Let $\Phi_\varepsilon$ be the product of the pair cutoffs over all unordered pairs and set
\begin{equation}
 u_\varepsilon:=\chi(1-\Phi_\varepsilon).
\end{equation}
Then, $u_\varepsilon\ge1$ near $K\cap\mathcal C_N$.  The term $\chi\nabla\Phi_\varepsilon$ tends to zero in energy by \eqref{eq:pair-cutoff-energy} and the finite-pair product rule.  The term $(1-\Phi_\varepsilon)\nabla\chi$ and the $L^2$ norm of $u_\varepsilon$ tend to zero by dominated convergence.  Hence, every compact part of $\mathcal C_N$ has zero capacity.  Exhaustion and countable subadditivity give $\operatorname{Cap}_{\cE_{N,\beta},1}(\mathcal C_N)=0$.

Multiplying a compactly supported smooth function by $\Phi_\varepsilon$ removes all collision strata with vanishing form error.  Local mollification inside $\Omega_N$ yields the common-core assertion.
\end{proof}

\leavevmode We next record the corresponding approximation of the constant function $1$ in the many-particle and weighted one-particle domains.  It permits subtraction of means within the closed domain and will be used in the two Rothaus centering steps.

\begin{lemma}[Constants belong to the closed domains]\label{lem:constants-domain}
The constant function $1$ belongs to $\mathsf D_{N,\beta}$.  More generally, for every weighted Gaussian measure of \Cref{thm:weighted}, the constant function belongs to its closed form domain.
\end{lemma}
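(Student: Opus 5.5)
The plan is a two-stage approximation: first approximate $1$ by compactly supported smooth functions on the whole space $\C^N$, then remove the collision set using \Cref{prop:capacity}. Fix a radial cutoff $\eta\in C_c^\infty(\C^N)$ with $0\le\eta\le1$, $\eta=1$ on the unit ball, $\eta=0$ outside the ball of radius $2$, and set $\eta_R(Z):=\eta(Z/R)$. Then $\eta_R\to1$ pointwise with $|\eta_R|\le1$. Since $\mathbb P_{N,\beta}$ is a probability measure by \Cref{prop:wellposed}, dominated convergence gives $\eta_R\to1$ in $L^2(\mathbb P_{N,\beta})$. For the energy, $|\nabla\eta_R|\le C/R$, so
\begin{equation*}
 \cE_{N,\beta}(\eta_R-1)=\int|\nabla\eta_R|^2\,\D\mathbb P_{N,\beta}\le C^2R^{-2}\to0 .
\end{equation*}
Hence $\eta_R\to1$ in the form norm $\|\cdot\|_{N,\beta}$. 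This computation makes sense because $\eta_R\in C_c^\infty(\C^N)$ and the form extends to such functions by the same integral.

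The second stage puts each $\eta_R$ into $\mathsf D_{N,\beta}$. By the common-core assertion of \Cref{prop:capacity}, the closure of $C_c^\infty(\C^N)$ in $\|\cdot\|_{N,\beta}$ equals $\mathsf D_{N,\beta}$. Concretely, $\eta_R\Phi_\varepsilon$ converges to $\eta_R$ in the form norm as $\varepsilon\to0$, where $\Phi_\varepsilon$ is the product of pair cutoffs. One then mollifies inside $\Omega_N$ to obtain elements of $C_c^\infty(\Omega_N)$. A diagonal choice $\varepsilon=\varepsilon(R)$ shows that $1\in\mathsf D_{N,\beta}$, since the domain is closed.

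The weighted one-particle case follows the same scheme on $\C$ with $\nu$. The only new step is removing the finitely many points $y_j$ with $a_j>0$. Near such a point, $\D\nu\le C|z-y_j|^{a_j}\D z$ locally, because the other factors are bounded on compacts; if $a_j$ is tiny, even the bound $\D\nu\le C\,\D z$ suffices. A logarithmic cutoff handles this: take $\psi_\delta(z)=\min\{1,\max\{0,\log(|z-y_j|/\delta^2)/\log(1/\delta)\}\}$. It has planar Dirichlet energy of order $1/\log(1/\delta)\to0$ with respect to Lebesgue measure, and therefore also with respect to $\nu$ locally. Its $L^2$ defect tends to zero by dominated convergence. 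Since the product over the finitely many points is finite, the same argument goes through, and the support points have zero capacity. Normalizability of $\nu$, needed for dominated convergence, holds because the weight grows at most polynomially against the Gaussian. Alternatively, for the many-particle case one could simply cite \Cref{prop:capacity}. For $\nu$, the power-type cutoff from the proof of \Cref{prop:capacity} would also work when $a_j>0$, giving energy $O(\delta^{a_j})$. The logarithmic cutoff is preferable because it avoids any dependence on $a_j>0$.

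The main obstacle is only bookkeeping: confirming that the form used to close $C_c^\infty(\C^N)$ is the same closed form, so that \Cref{prop:capacity}'s identification applies, and arranging the diagonal limit. The gradient bound $C/R$ against a probability measure is routine.
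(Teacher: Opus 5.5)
Your proposal is correct and follows essentially the same route as the paper: a radial cutoff at infinity whose energy is $O(R^{-2})$, then the collision-avoidance cutoffs of \Cref{prop:capacity}, then a diagonal argument. The only differences are minor. In the one-particle case the paper uses the power cutoff with energy $O(\varepsilon^{a})$ at a point of exponent $a>0$, whereas your logarithmic cutoff needs only local boundedness of the density and the vanishing Lebesgue capacity of points in the plane; both are valid. Your tail step also correctly uses only that the measures are probability measures, without appealing to Gaussian tails.
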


\begin{proof}
In either case, let $\mathsf P$ denote the probability measure and let $\mathsf X$ denote its ambient space, $\C^N$ or $\C$, respectively.
Choose a radial cutoff $\chi_R$ equal to one on $B_R$, zero outside $B_{2R}$, with $|\nabla\chi_R|\le C/R$.  Gaussian-polynomial tails imply
\begin{equation}
 \|1-\chi_R\|_2^2\longrightarrow0,
 \qquad
 \int_{\mathsf X}|\nabla\chi_R|^2\,\D\mathsf P
 \le \frac{C}{R^2}\mathsf P(B_{2R}\setminus B_R)\longrightarrow0.
\end{equation}
For fixed $R$, multiply $\chi_R$ by the finite product of collision-avoidance cutoffs used in \Cref{prop:capacity}.  Their $L^2$ and energy errors vanish as the collision scale tends to zero.  A diagonal sequence gives collision-free compactly supported smooth approximants converging to $1$ in form norm.

For a measure $e^{-\gamma|z|^2/2}\prod_j|z-y_j|^{a_j}\D z$ of the class in \Cref{thm:weighted}, combine coincident points and discard zero exponents.  At a point with local exponent $a>0$, the energy of a radial avoidance cutoff is $O(\varepsilon^a)$.  The same radial-tail argument then proves $1$ belongs to the point-punctured closed domain.
\end{proof}

\subsection{Center-of-mass factorization}

\leavevmode Having identified the closed Dirichlet form, we now separate the common translation mode from the relative coordinates.  Because the interaction depends only on particle differences, the center of mass is an exact planar Gaussian factor.

\begin{proposition}[Center-of-mass factorization]\label{prop:com}

Let
\begin{equation}
 c_N=N^{-1/2}\sum_{i=1}^Nz_i,
 \qquad
 r_k:=\frac{z_1+\cdots+z_k-kz_{k+1}}{\sqrt{k(k+1)}},
 \quad 1\le k\le N-1,
\end{equation}
and set $Z^{\rm rel}:=(r_1,\ldots,r_{N-1})$.  The map $Z\mapsto(c_N,Z^{\rm rel})$ is unitary and, in these coordinates,

\begin{equation}
 \mathbb P_{N,\beta}=\gamma_\beta^{(2)}\otimes \mathbb P_{N,\beta}^{\rm rel},
\end{equation}
where $\gamma_\beta^{(2)}(dc)=(\beta/2\pi)e^{-\beta|c|^2/2}\D c$ and $\mathbb P_{N,\beta}^{\rm rel}$ is the law of $Z^{\rm rel}$.  Consequently,
\begin{equation}
 \rho(\mathbb P_{N,\beta})=\min\{\beta,\rho(\mathbb P_{N,\beta}^{\rm rel})\}.
\end{equation}
\end{proposition}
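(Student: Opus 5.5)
The plan has three steps: show the coordinate change is unitary, read off the product structure of the density, and pass the LSI constant through the product using the closed-domain results of \Cref{prop:capacity}.

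\textbf{Unitarity.} The vectors $e_0=N^{-1/2}(1,\ldots,1)$ and
\[
e_k=\frac{(1,\ldots,1,-k,0,\ldots,0)}{\sqrt{k(k+1)}},\qquad 1\le k\le N-1,
\]
where $-k$ sits in slot $k+1$, form the Helmert basis. These vectors are real, have unit length, and are pairwise orthogonal, since $e_k$ is orthogonal to $e_0$ and to all $e_l$ with $l<k$. The map $Z\mapsto(c_N,Z^{\rm rel})$ is therefore given by a real orthogonal matrix acting diagonally on the complex coordinates. Such a matrix is unitary on $\C^N$ and hence an isometry of $\R^{2N}$. It follows that Lebesgue measure is preserved and $|Z|^2=|c_N|^2+|Z^{\rm rel}|^2$.

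\textbf{Product structure.} Each difference $z_i-z_j$ is orthogonal to $e_0$, so it is a linear function of $Z^{\rm rel}$ alone. Explicitly, $z_i=N^{-1/2}c_N+\sum_k (e_k)_i r_k$, and the $c_N$-term cancels in the difference. The density therefore factors as
\[
e^{-\beta|c_N|^2/2}\cdot e^{-\beta|Z^{\rm rel}|^2/2}\prod_{i<j}|z_i-z_j|^{\beta/N}.
\]
Normalizing each factor identifies $\mathbb P_{N,\beta}$ with $\gamma^{(2)}_\beta\otimes\mathbb P^{\rm rel}_{N,\beta}$, where $\mathbb P^{\rm rel}_{N,\beta}$ is the pushforward. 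The same isometry carries gradient norms to gradient norms, so the Dirichlet forms correspond. The collision set is $\C\times\mathcal C^{\rm rel}$, where $\mathcal C^{\rm rel}$ is its image in the relative coordinates.

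\textbf{The LSI constant.} For the inequality $\rho\ge\min\{\beta,\rho^{\rm rel}\}$, use tensorization of the LSI (subadditivity of entropy on products). The Gaussian factor $\gamma^{(2)}_\beta$ has LSI constant exactly $\beta$ in the convention \eqref{eq:lsi-convention}, by the Gaussian LSI. Tensorization is first applied to functions in the common core $C_c^\infty$ of the product space; by \Cref{prop:capacity} this core is dense in the closed domain, and the two sides extend by closure. For the reverse inequality $\rho\le\min\{\beta,\rho^{\rm rel}\}$, test with functions depending on only one factor. Functions of $c_N$ alone are multiplied by collision-avoidance cutoffs, which is allowed by zero capacity together with \Cref{lem:constants-domain}. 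Functions of $Z^{\rm rel}$ alone already belong to the relative domain. Testing on each class shows that neither factor's optimal constant can be beaten.

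\textbf{Main obstacle.} The delicate point is the domain bookkeeping rather than the algebra: the closed domain of the product must coincide with the closure of functions on the product of the factor domains. I would handle this via \Cref{prop:capacity}. Both the full form and the relative form can be generated by $C_c^\infty$ of the whole space, since the collision set has zero capacity in each. Tensor products of such functions are dense in $C_c^\infty$ of the product, and the LSI passes to limits in form norm by Fatou's lemma for the entropy term.
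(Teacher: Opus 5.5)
Your proposal is correct and follows the paper's proof. The shared steps are: orthonormality of the Helmert-type basis; the fact that each difference $z_i-z_j$ annihilates the diagonal direction, so the interaction depends only on $Z^{\rm rel}$; tensorization for $\rho(\mathbb P_{N,\beta})\ge\min\{\beta,\rho(\mathbb P^{\rm rel}_{N,\beta})\}$; and testing on functions of a single factor for the reverse inequality. Your domain bookkeeping only makes explicit what the paper leaves implicit, with two small corrections. A function of $Z^{\rm rel}$ alone still needs a cutoff in $c_N$, as in \Cref{lem:constants-domain}, to lie in $\mathsf D_{N,\beta}$. And no density of tensor products is needed, since entropy subadditivity applies directly to $f\in C_c^\infty(\Omega_N)$, whose slices in $c_N$ and in $Z^{\rm rel}$ already lie in the respective cores.
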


\begin{proof}

The displayed coefficient vectors form an orthonormal basis of $\C^N$, while every difference functional $Z\mapsto z_i-z_j$ annihilates the diagonal direction.  Thus, the Gaussian density and Lebesgue measure factor, and the interaction depends only on $Z^{\rm rel}$.  This gives the product decomposition.  Tensorization \cite[Rem.~3.3]{Gross1975} and testing on one factor give the equality of constants.

\end{proof}


\section{Weighted Gaussian inequalities with point singularities}\label{sec:weighted}\label{sec:weighted-points}

This section proves \Cref{thm:weighted} for the weighted planar Gaussian measures defined below.
Fix $\gamma>0$ and $Q<\infty$.  Let $y_1,\dots,y_m\in\C$ and $a_1,\dots,a_m\ge0$, and denote their total exponent by
\begin{equation}
 q:=\sum_{j=1}^m a_j\le Q.
\end{equation}
After combining coincident points, define the point-singularity weight $w$ and the associated weighted Gaussian probability measure $\nu$, with normalizing constant $Z$, by
\begin{equation}\label{eq:w-def}
 w(z):=\prod_{j=1}^m|z-y_j|^{a_j},
 \qquad
 \D\nu(z):=Z^{-1}e^{-\gamma|z|^2/2}w(z)\,\D z.
\end{equation}
At every point carrying positive total exponent, the weight $w$ extends continuously and vanishes; here, ``point singularity'' refers to the singularities of $\log w$ and its score, not to a blow-up of $w$ itself.

The dependence on the total variance of the underlying Gaussian measure is fixed by dilation.  Under $x=\sqrt\gamma z$, the pushforward of $\nu$ is a measure of the same class whose underlying Gaussian measure has total variance $2$, with points $\sqrt\gamma y_j$ and the same exponents; the configuration-independent power of $\gamma$ is absorbed by normalization.  Since the Dirichlet energy is multiplied by $\gamma$, we may and do choose the constants in \Cref{thm:weighted} so that
\begin{equation}\label{eq:weighted-scaling}
 C_{\rm wt}(\gamma,Q)=\gamma^{-1}C_{\rm wt}(1,Q),
 \qquad
 \rho_{\rm wt}(\gamma,Q)=\gamma\rho_{\rm wt}(1,Q).
\end{equation}

The proof combines local Sobolev control with estimates on the exterior region.  The strong-$A_\infty$ geometry established in \Cref{lem:strong-Ainf} yields the local Sobolev estimate \eqref{eq:local-sob-gauss} on bounded disks, uniformly in the point configuration.  Gaussian confinement is then used twice.  First, \Cref{lem:norm-tail} provides a disk, whose radius depends only on $\gamma$ and $Q$, containing at least half of the probability mass.  Second, the score estimate of \Cref{prop:uncertainty} yields the quadratic-moment form bound \Cref{cor:U-bound}, which controls the squared $L^2(\nu)$ mass of a test function outside large disks in terms of its Dirichlet energy and total squared $L^2(\nu)$ norm.  Together these estimates give the global Poincar\'e and Gaussian-rate super-Poincar\'e inequalities in \Cref{prop:weighted-pi,prop:super-pi}; the latter yields a defective logarithmic Sobolev inequality, which is tightened using \eqref{eq:rothaus-centering} and the Poincar\'e bound to obtain \eqref{eq:weighted-lsi}.

\subsection{Local quasiconformal and \texorpdfstring{strong-$A_\infty$}{strong-A-infinity} geometry}

For a locally integrable weight $w>0$ almost everywhere on $\R^2$ and $x,y\in\R^2$, let $B_{xy}$ denote the Euclidean ball having the segment from $x$ to $y$ as a diameter, and define the weighted-volume gauge $\delta_w$ by
\begin{equation}
 B_{xy}:=B\!\left(\frac{x+y}{2},\frac{|x-y|}{2}\right),
 \qquad
 \delta_w(x,y):=\left(\int_{B_{xy}}w\right)^{1/2},
\end{equation}
and define the path metric
\begin{equation}
 d_w(x,y):=\inf_\gamma\int_\gamma w^{1/2}\,\D s,
\end{equation}
where the infimum is over rectifiable curves joining $x$ to $y$.  Following the work of David and Semmes \cite{DavidSemmes1990} and of Semmes \cite{Semmes1993}, a doubling weight is called \emph{strong $A_\infty$} if $d_w$ and $\delta_w$ are quantitatively comparable.  This metric-geometric condition is stronger than, and is not synonymous with, membership in the ordinary Muckenhoupt class $A_\infty$.  For a refresher on the ordinary class, see \cite[\S7.3]{Grafakos2014}.

\leavevmode For a ball $B$, write $w(B):=\int_Bw\,\D z$, $f_{B,w}:=(\int_Bfw\,\D z)/w(B)$, and $f_{B,\nu}:=\nu(B)^{-1}\int_Bf\,\D\nu$ for the corresponding weighted volumes and averages.  Likewise, $\fint_Bh w$ and $\fint_Bh\,\D\nu$ denote integration against the respective normalized restrictions to $B$.

\begin{lemma}[Strong-$A_\infty$ local geometry]\label{lem:strong-Ainf}
The measure $w\D z$ is doubling, with doubling constant depending only on $Q$.  It supports a weak $(1,2)$ Poincar\'e inequality, with constants depending only on $Q$.\footnote{Here, ``weak'' means that the gradient term may be averaged over a fixed dilate $\lambda B$ of $B$.  This differs from the semigroup-theoretic weak Poincar\'e inequality involving a scale-dependent remainder term; see \cite[p.~176]{Bjorn2001} for the former; for the latter, see \cite[\S7.5]{BakryGentilLedoux2014} and \cite{RocknerWang2001}.}  There are $\kappa_Q>1$, $\Lambda_Q\ge1$, and $C_Q<\infty$ such that, for every Euclidean ball $B=B(x,r)$,
\begin{equation}\label{eq:local-sob-raw}
 \left(\fint_B|f-f_{B,w}|^{2\kappa_Q}w\right)^{1/(2\kappa_Q)}
 \le C_Qr\left(\fint_{\Lambda_QB}|\nabla f|^2w\right)^{1/2}.
\end{equation}
\end{lemma}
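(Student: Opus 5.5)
The plan is to obtain all three assertions from the David--Semmes/Semmes theory of strong-$A_\infty$ weights, together with the result of Bonk--Lang and Björn that such weights are doubling and support Poincaré/Sobolev inequalities. The quantitative input is that $w=\prod_j|z-y_j|^{a_j}$ is the modulus of a holomorphic-type product, and $\log w$ is a sum of logarithmic potentials with total mass $q\le Q$.

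\textbf{Step 1 (doubling).} I will show $w(B(x,2r))\le C_Q\,w(B(x,r))$ uniformly in the configuration. For a single factor $|z-y|^{a}$ this is elementary, with the constant depending only on $a$. For the product I would use a Hölder/Jensen argument. Writing $w=\prod_j(|z-y_j|^{q})^{a_j/q}$, the geometric mean of the weights $|z-y_j|^q$ with probability weights $a_j/q$ is controlled from above on $B(x,2r)$ by Hölder. For the lower bound on $B(x,r)$, I use that $\fint_B\log|z-y|\,\D z\ge \log\max(r,|x-y|)-C$ together with Jensen. Since each $|z-y_j|^q$ lies in $A_\infty$ with constants depending only on $q$, I expect reverse Jensen,
\[
\fint_B w\le C_Q\exp\Big(\fint_B\log w\Big),
\]
to hold with a constant depending only on $Q$. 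Indeed, $\log w$ has BMO norm $\lesssim q$ because $\log|\cdot|\in$ BMO. This gives doubling and even $A_\infty$ membership with constants depending on $Q$ alone.

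\textbf{Step 2 (strong $A_\infty$).} I want $d_w\approx\delta_w$. The easy inequality is $\delta_w\lesssim d_w$, which follows from doubling. For the converse I use Semmes' criterion: $w^{1/2}=|F'|$ up to constants, with $F$ quasiconformal. Here $w^{1/2}=|g|$ where $g(z)=\prod_j(z-y_j)^{a_j/2}$ is a multivalued holomorphic function. Alternatively, by David--Semmes, $e^{u}$ is strong $A_\infty$ whenever $u=\sum_j a_j\log|z-y_j|$ is the logarithmic potential of a positive measure of small mass, and for larger mass one uses Bonk--Lang's criterion for planar metrics $e^{2u}|dz|^2$ with $\Delta u=\mu\ge0$, i.e.\ nonnegative curvature. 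In our case $\Delta(\tfrac12\log w)=\pi\sum_j a_j\delta_{y_j}$ is a negative-curvature-type measure of total mass $\pi q$. Bonk--Lang give bi-Lipschitz equivalence to the plane when the negative curvature mass is less than $2\pi$. That requires $q<2$, which is not enough.

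\textbf{Main obstacle.} I expect this step to be the hardest, because it has to handle arbitrary $Q$. What I would try first is a direct comparison. For $x,y$ with $r=|x-y|$, I choose a path from $x$ to $y$ inside $2B_{xy}$ that avoids the $y_j$ by detours along circles of radius comparable to the gaps. The goal is to show
\[
\int_\gamma w^{1/2}\,\D s\lesssim_Q r\,\Big(\fint_{2B_{xy}}w\Big)^{1/2}.
\]
To do this, average over a one-parameter family of parallel segments or arcs and use Fubini:
\[
\fint_{\text{family}}\int_\gamma w^{1/2}\lesssim r\fint_{2B_{xy}}w^{1/2}\le r\Big(\fint w\Big)^{1/2}.
\]
This yields one good path, so $d_w\lesssim\delta_w$ with constants depending on doubling alone, and hence only on $Q$. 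The averaging trick appears to avoid curvature restrictions entirely, so I would verify it carefully.

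\textbf{Step 3 (Poincaré and Sobolev).} Strong-$A_\infty$ weights support a $(1,1)$, hence weak $(1,2)$, Poincaré inequality on balls with controlled dilation $\Lambda_Q$ (David--Semmes; Björn, p.~176). Combining this with doubling and the weighted Hajłasz--Koskela self-improvement yields the $(2\kappa_Q,2)$ Sobolev--Poincaré inequality \eqref{eq:local-sob-raw}. Here $\kappa_Q>1$ is determined by the lower mass-dimension exponent $s=\log_2 C_{\rm doub}$ through $\kappa=s/(s-2)$ for $s>2$, and any $\kappa>1$ works if $s\le2$. All constants depend only on $Q$.
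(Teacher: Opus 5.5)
\paragraph{Gap in Step 2.} The gap is in the direction you call easy. The claim that $\delta_w\lesssim d_w$ follows from doubling is false. For example, $w(z)=|\operatorname{Re} z|^{a}$ with $a>0$ is doubling (indeed $A_\infty$) on $\C$, yet $d_w(x,y)=0$ for $x,y\in i\R$ while $\delta_w(x,y)>0$. This lower bound, $d_w\gtrsim\delta_w$, is exactly the nontrivial content of the strong-$A_\infty$ condition: no curve can travel cheaply through the region where $w$ is small. For the present weights, this is where clustering of the points $y_j$ at many scales must be controlled uniformly.

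The inequality your averaging argument produces, $d_w\lesssim\delta_w$, is the one that holds for every doubling weight on the plane. All curves in your family share the endpoints $x,y$, so the Fubini density is $\lesssim|z-x|^{-1}+|z-y|^{-1}$, not bounded as your display suggests. A dyadic decomposition, Cauchy--Schwarz, and reverse doubling then give $d_w(x,y)\lesssim w(B(x,2|x-y|))^{1/2}\lesssim\delta_w(x,y)$ with no further input. That is why the trick ``avoids curvature restrictions entirely'': it says nothing about the hard direction. So strong $A_\infty$ is not established, and Step 3, which needs it to invoke David--Semmes and Bj\"orn, has no input.

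\paragraph{The missing ingredient.} It is the one the paper uses, and you discarded it because of a misremembered hypothesis. Bonk and Lang require the \emph{positive} curvature mass to be less than $2\pi$ and allow arbitrary finite negative curvature mass. For the metric $\rho|dz|$ with $\rho=w^{1/2}$, the curvature measure is $-\Delta\log\rho=-\pi\sum_ja_j\delta_{y_j}$, i.e.\ cone angles $2\pi+\pi a_j$. So the positive mass is zero and the negative mass is $\pi q\le\pi Q$, and the theorem applies for every $Q$. Note also that for $e^{2u}|dz|^2$ the curvature is $-e^{-2u}\Delta u$, so $\Delta u\ge0$ means nonpositive curvature, not nonnegative.

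Bonk--Lang yields a quasiconformal $F$ with $C_Q^{-1}J_F\le w\le C_QJ_F$. Semmes' framework then gives strong $A_\infty$ (both directions of $d_w\approx\delta_w$, together with doubling) with data depending only on $Q$. From there your Step 3 coincides with the paper's: Bj\"orn's Corollary~8 and Theorem~7, with $\kappa_Q$ chosen so that $s_Q(\tfrac12-\tfrac1{2\kappa_Q})\le1$.

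\paragraph{Step 1.} Your Step 1 is correct and more elementary than the paper's route to doubling. H\"older and Jensen give $\fint_{B(x,r)}w\approx_Q\prod_j\max(r,|x-y_j|)^{a_j}$ directly. The BMO remark is unnecessary, and by itself it would not give $A_\infty$ when $\|\log w\|_{\mathrm{BMO}}$ is large.
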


\begin{proof}
For $q=0$, this is Euclidean.  Assume $q>0$ and set $\rho=w^{1/2}$.  The curvature measure of the conformal metric $\rho|dz|$ is
\begin{equation}
 -\Delta\log\rho=-\pi\sum_ja_j\delta_{y_j}.
\end{equation}
Thus, its positive curvature mass is zero and its negative mass is $\pi q$.  Bonk and Lang \cite[Thm.~9.1]{BonkLang2003} gave a quasiconformal homeomorphism\footnote{Here, ``quasiconformal'' means bounded distortion: $F$ is an orientation-preserving $W_{\mathrm{loc}}^{1,2}$ homeomorphism satisfying $\lVert DF(z)\rVert_{\mathrm{op}}^2\le KJ_F(z)$ almost everywhere for some $K<\infty$.  Equivalently, its differential sends infinitesimal circles to ellipses of uniformly bounded eccentricity; see \cite[\S2.5]{AstalaIwaniecMartin2009}.} $F:\C\to\C$ and a constant $C=C(Q)$ such that the Jacobian determinant $J_F$ satisfies
\begin{equation}\label{eq:BL-J}
 C^{-1}J_F\le w\le CJ_F.
\end{equation}
The associated conformal metric is $L$-bi-Lipschitz equivalent to the Euclidean plane, where $L=\sqrt{1+q/2}$.

Semmes's quantitative framework \cite[paragraph following Question~1.5 and definition following (1.8)]{Semmes1993} implies that a weight quantitatively comparable with the Jacobian of a planar quasiconformal homeomorphism is strong-$A_\infty$.  Pointwise comparability changes only the quantitative constants.  Hence, $w$ is strong-$A_\infty$ with data controlled by $Q$.
By Björn \cite[Corollary~8]{Bjorn2001}, the strong-$A_\infty$ property implies the asserted weak $(1,2)$ Poincar\'e inequality (together with doubling, this says that $w\D z$ is $2$-admissible in Björn's terminology).  Quantitative doubling gives constants $c_Q,s_Q>0$ such that, for nested balls $B_0\subset B$ with respective radii $r_0\le r$,
\begin{equation}
 \frac{w(B_0)}{w(B)}\ge c_Q\left(\frac{r_0}{r}\right)^{s_Q}.
\end{equation}
Choose $\kappa_Q>1$ sufficiently close to one that
\begin{equation}
 s_Q\left(\frac12-\frac1{2\kappa_Q}\right)\le1.
\end{equation}
Björn's result \cite[Thm.~7]{Bjorn2001}, with the same measure on both sides, gives \eqref{eq:local-sob-raw}.
\end{proof}

\begin{corollary}[Gaussian local Sobolev inequality]\label{cor:local-gauss}
There exists $C_{\gamma,Q}<\infty$, depending only on $(\gamma,Q)$, such that, for every $R\ge1$,
\begin{equation}\label{eq:local-sob-gauss}
 \left(\fint_{B_R}|f-f_{B_R,\nu}|^{2\kappa_Q}\D\nu\right)^{1/(2\kappa_Q)}
 \le C_{\gamma,Q}e^{C_{\gamma,Q}R^2}R
 \left(\fint_{B_{\Lambda_QR}}|\nabla f|^2\D\nu\right)^{1/2}.
\end{equation}
\end{corollary}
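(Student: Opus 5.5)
The plan is to deduce \eqref{eq:local-sob-gauss} from \eqref{eq:local-sob-raw}, applied to the ball $B=B_R=B(0,R)$, by comparing $\D\nu$ with $Z^{-1}w\,\D z$ on $B_{\Lambda_QR}$. Write $\D\nu=Z^{-1}e^{-\gamma|z|^2/2}w\,\D z$. On $B_{\Lambda_QR}$ the Gaussian factor lies between $e^{-\gamma\Lambda_Q^2R^2/2}$ and $1$. The normalization $Z^{-1}$ cancels in every normalized average. Therefore, for any nonnegative $h$ and any ball $B\subset B_{\Lambda_QR}$,
\begin{equation*}
 e^{-\gamma\Lambda_Q^2R^2/2}\fint_B h\,w\le\fint_B h\,\D\nu\le e^{\gamma\Lambda_Q^2R^2/2}\fint_B h\,w .
\end{equation*}
This uses the fact that both the numerator and the denominator $\nu(B)$ versus $w(B)$ are distorted by at most the factor $e^{\gamma\Lambda_Q^2R^2/2}$. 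The bound is one-sided in each direction, so the ratio picks up at most a factor $e^{\gamma\Lambda_Q^2R^2}$.

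Next I handle the change of centering constant. The left side of \eqref{eq:local-sob-gauss} is centered at $f_{B_R,\nu}$, while \eqref{eq:local-sob-raw} is centered at $f_{B_R,w}$. For any constant $c$, by Minkowski and Jensen under the probability measure $\nu|_{B_R}/\nu(B_R)$,
\begin{equation*}
 \Big(\fint_{B_R}|f-f_{B_R,\nu}|^{2\kappa_Q}\D\nu\Big)^{1/(2\kappa_Q)}\le2\Big(\fint_{B_R}|f-c|^{2\kappa_Q}\D\nu\Big)^{1/(2\kappa_Q)}.
\end{equation*}
I take $c=f_{B_R,w}$. The comparison above then bounds the right side by $2e^{\gamma\Lambda_Q^2R^2/(2\kappa_Q)}$ times the corresponding $w$-average. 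Lemma \ref{lem:strong-Ainf} in turn bounds that $w$-average by
\begin{equation*}
 C_QR\Big(\fint_{\Lambda_QB_R}|\nabla f|^2w\Big)^{1/2}.
\end{equation*}

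Finally I convert the gradient average back to $\nu$, which costs another factor $e^{\gamma\Lambda_Q^2R^2/2}$. Collecting the factors gives the constant
\begin{equation*}
 2C_Qe^{\gamma\Lambda_Q^2R^2}R\le C_{\gamma,Q}e^{C_{\gamma,Q}R^2}R,
\end{equation*}
with $C_{\gamma,Q}:=\max\{2C_Q,\gamma\Lambda_Q^2\}$, for all $R\ge1$.

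I do not expect a genuine obstacle. The only points needing care are the following:
\begin{itemize}
 \item checking that the constants in Lemma \ref{lem:strong-Ainf} depend only on $Q$, uniformly over the point configuration. This is asserted there, and it is why the final constant is configuration-free.
 \item noting that the inequality is first proved for $f\in C_c^\infty(\C\setminus\{y_j\})$ and then extended to the closed form domain by density. Both sides are continuous in the form norm on the bounded ball, the left side possibly via Fatou's lemma along an a.e. convergent subsequence.
\end{itemize}
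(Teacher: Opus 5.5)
Your proposal is correct and is essentially the paper's proof: apply \Cref{lem:strong-Ainf} on $B_R$, use that on $B_{\Lambda_QR}$ the Gaussian factor lies between $e^{-\gamma\Lambda_Q^2R^2/2}$ and $1$, and note that $Z^{-1}$ cancels in normalized averages. Your explicit recentering from $f_{B_R,w}$ to $f_{B_R,\nu}$, at the cost of a factor $2$, fills in a step the paper leaves implicit. Your exponent bookkeeping overcounts slightly (you ignore the square root on the gradient term), but this is harmless for the stated bound.
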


\begin{proof}
On the enlarged ball, the Gaussian multiplier is bounded above and below by positive constants whose ratio is at most $e^{C_{\gamma,Q}R^2}$.  Normalized averages are unaffected by the global normalization $Z^{-1}$.
\end{proof}

\subsection{Normalization, tails, and score control}

The local weighted inequalities are insensitive to the locations of the points carrying positive exponent, but they do not prevent mass from escaping to infinity.  To obtain uniform global control, we first establish normalization and Gaussian tail bounds, and then control the logarithmic force field in the score to recover a quadratic-moment estimate.

\begin{lemma}[Uniform normalization and tails]\label{lem:norm-tail}
Set $\mathsf A=\prod_j(1+|y_j|)^{a_j}$.  There are $0<c_{\gamma,Q}\le C_{\gamma,Q}<\infty$ such that
\begin{equation}\label{eq:Z-comparison}
 c_{\gamma,Q}\mathsf A\le Z\le C_{\gamma,Q}\mathsf A.
\end{equation}
Moreover,
\begin{equation}\label{eq:gaussian-tail}
 \nu(|z|>R)
 \le C_{\gamma,Q}\int_{|z|>R}e^{-\gamma|z|^2/2}(1+|z|)^Q\,\D z.
\end{equation}
In particular, $\nu(B_{R_{\gamma,Q}})\ge1/2$ for a radius independent of the point configuration.
\end{lemma}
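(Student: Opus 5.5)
The plan is to compare $w(z)$ pointwise with $\mathsf A$ times an explicit function of $|z|$ on a region of fixed Gaussian mass, and then integrate. By the dilation \eqref{eq:weighted-scaling} it suffices to track $\gamma$ only through constants, so I treat $\gamma$ as fixed. Write $Z=\int e^{-\gamma|z|^2/2}w(z)\,\D z$.

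\emph{Upper bound.} Use $|z-y_j|\le(1+|z|)(1+|y_j|)$ factor by factor. This gives
\[
w(z)\le \mathsf A(1+|z|)^{q}\le\mathsf A(1+|z|)^Q,
\]
so that $Z\le \mathsf A\int e^{-\gamma|z|^2/2}(1+|z|)^Q\,\D z=C_{\gamma,Q}\mathsf A$. The same pointwise bound, integrated over $\{|z|>R\}$ and divided by the lower bound on $Z$, yields \eqref{eq:gaussian-tail}. The radius $R_{\gamma,Q}$ is then chosen so that the right-hand side of \eqref{eq:gaussian-tail} is at most $1/2$. This choice depends only on $(\gamma,Q)$, since $q\le Q$.

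\emph{Lower bound.} This is the step I expect to be the main obstacle, because the zeros $y_j$ may sit anywhere, including inside the unit disk, and may cluster. I handle it with a Jensen/averaging argument over a fixed disk $D=B_1$. Since $w>0$ a.e., Jensen's inequality for the normalized Lebesgue measure on $D$ gives
\[
\log\fint_D w\ge \fint_D\log w=\sum_j a_j\fint_D\log|z-y_j|\,\D z .
\]
For each fixed $y\in\C$, the quantity $\fint_{B_1}\log|z-y|\,\D z$ is bounded below by $\log(1+|y|)-C_0$ with $C_0$ an absolute constant. I would verify this in two cases.
\begin{itemize}
\item If $|y|\ge2$, then $|z-y|\ge|y|/2\ge(1+|y|)/3$ on $D$.
\item If $|y|<2$, then $\fint_{B_1}\log|z-y|\,\D z\ge\fint_{B_1}\log|z-y|\mathbf 1_{|z-y|<1}\,\D z\ge\frac1\pi\int_{B_1(0)}\log|u|\,\D u=-\tfrac12$, while $\log(1+|y|)\le\log3$.
\end{itemize}
Summing with weights $a_j$ gives $\fint_D w\ge e^{-C_0q}\mathsf A\ge e^{-C_0Q}\mathsf A$. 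On $D$ the Gaussian factor is at least $e^{-\gamma/2}$, so $Z\ge\pi e^{-\gamma/2}e^{-C_0Q}\mathsf A$.

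This proves \eqref{eq:Z-comparison}. Combining its lower bound with the pointwise upper bound $w\le\mathsf A(1+|z|)^Q$ closes the tail estimate as described above. Combining points and discarding zero exponents does not affect any of these bounds.
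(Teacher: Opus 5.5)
Your proposal is correct and follows the paper's proof. You use the same pointwise bound $|z-y|\le(1+|z|)(1+|y|)$ for the upper bound and the tail, and the same Jensen argument on $B_1$ for the lower bound. The only difference is that you bound the average of $\log|z-y|$ over $B_1$ by an elementary two-case estimate (constant $\log 3+\tfrac12$), whereas the paper uses the exact mean-value formula (constant $\log 2+\tfrac12$); this difference is immaterial.
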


\begin{proof}
The upper bound follows from $|z-y|\le(1+|z|)(1+|y|)$.  For the lower bound, use
\begin{equation}
 \fint_{B_1}\log|z-y|\,\D z=
 \begin{cases}(|y|^2-1)/2,&|y|\le1,\\ \log|y|,&|y|\ge1,
 \end{cases},
\end{equation}
which is bounded below by $\log(1+|y|)-(\log2+1/2)$.  Jensen's inequality on $B_1$ gives
\begin{equation}
 Z\ge\pi e^{-\gamma/2-(\log2+1/2)Q}\mathsf A.
\end{equation}
The tail assertion follows after division by this lower bound.
\end{proof}

Define the logarithmic force field $H$ and the negative total force (equivalently, negative score) $F$ by
\begin{equation}
 H(x):=\nabla\log w(x)=\sum_ja_j\frac{x-y_j}{|x-y_j|^2},
 \qquad F(x):=\gamma x-H(x).
\end{equation}

\begin{lemma}[Weak-$L^2$ logarithmic force estimate]\label{lem:weak-field}
There is a universal $C$ such that
\begin{equation}\label{eq:weak-field}
 |\{x:|H(x)|>t\}|\le Cq^2t^{-2},\qquad t>0.
\end{equation}
\end{lemma}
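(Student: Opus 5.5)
Since $|H(x)|\le\sum_ja_j|x-y_j|^{-1}$, it suffices to prove the weak-$L^2$ bound for the scalar potential $h(x):=\sum_ja_j|x-y_j|^{-1}$. The plan is to treat $h$ as the convolution of the kernel $k(x)=|x|^{-1}$ with the finite positive measure $\mu:=\sum_ja_j\delta_{y_j}$ of total mass $q$. In the plane, $k$ belongs to weak-$L^2$: $|\{k>t\}|=\pi t^{-2}$. The claim is then the standard fact that convolving a weak-$L^p$ kernel with a finite measure stays in weak-$L^p$ with quasinorm at most a constant times $\|\mu\|$. That constant is uniform because $p=2>1$, so weak-$L^2$ is normable.

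Concretely, I would avoid invoking normability abstractly and instead use a direct layer-cake argument. Fix $t>0$ and split $k=k\1_{\{|x|\le r\}}+k\1_{\{|x|>r\}}=:k_1+k_2$ with $r:=2q/t$. Then $k_2\le r^{-1}=t/(2q)$ pointwise, so $\mu*k_2\le q\cdot t/(2q)=t/2$ everywhere. Hence
\begin{equation*}
 \{h>t\}\subset\{\mu*k_1>t/2\}.
\end{equation*}
By Chebyshev's inequality and Fubini,
\begin{equation*}
 |\{\mu*k_1>t/2\}|\le \frac2t\int\mu*k_1=\frac2t\,q\int_{|x|\le r}|x|^{-1}\D x=\frac2t\,q\,2\pi r=\frac{8\pi q^2}{t^2}.
\end{equation*}
This gives \eqref{eq:weak-field} with $C=8\pi$, uniformly in the number and locations of the points and in the individual exponents. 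The case $q=0$ is trivial.

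There is no real obstacle here. The only point requiring care is choosing the truncation radius $r$ proportional to $q/t$, so that the bounded tail part is absorbed into half the threshold while the singular part has $L^1$ norm of order $qr$.
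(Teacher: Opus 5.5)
Your proof is correct, and it reaches \eqref{eq:weak-field} by a different and more elementary route than the paper, with the explicit constant $C=8\pi$.

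Both arguments start from the domination $|H|\le I_1\eta:=\int_{\C}|x-y|^{-1}\,\D\eta(y)$, where $\eta=\sum_ja_j\delta_{y_j}$. The paper splits the Riesz potential at an $x$-dependent radius and optimizes against the centered maximal function. This gives the pointwise Hedberg-type bound $I_1\eta\le C\sqrt{q\,M\eta}$. The paper then uses the weak-$(1,1)$ maximal inequality $|\{M\eta>\lambda\}|\le Cq/\lambda$, proved by a Vitali covering argument, to convert this into the weak-$L^2$ estimate.

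You instead split at the $t$-dependent radius $r=2q/t$. The far part is then bounded pointwise by $t/2$. The near part is handled by Chebyshev and Fubini, using only $\bigl\|\,|x|^{-1}\1_{\{|x|\le r\}}\bigr\|_{L^1}=2\pi r$.

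What each route buys:
\begin{itemize}
\item Your route needs no maximal function or covering lemma, and it makes the constant explicit.
\item The paper's route proves the stronger pointwise domination of $I_1\eta$ by $\sqrt{q\,M\eta}$. This is more than the lemma needs, but it is the standard Hedberg template. For $L^p$ data with $1<p<2$, the same template also gives strong-type $L^p\to L^{2p/(2-p)}$ bounds for $I_1$.
\end{itemize}
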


\begin{proof}
Let $\eta=\sum_ja_j\delta_{y_j}$.  Denote its centered Hardy--Littlewood maximal function (over balls) by
\begin{equation}
 M\eta(x):=\sup_{r>0}\frac{\eta(B(x,r))}{\pi r^2}.
\end{equation}
Splitting the Riesz potential $I_1\eta(x):=\int_{\C}|x-y|^{-1}\D\eta(y)$ at radius $R$ gives
\begin{equation}
 I_1\eta(x)\le CM\eta(x)R+q/R.
\end{equation}
Optimization yields $I_1\eta\le C\sqrt{qM\eta}$.  The same covering argument as in \cite[Lemma~2.1.5 and the proof of Thm.~2.1.6]{Grafakos2014} gives the weak $(1,1)$ maximal inequality for the finite measure $\eta$, namely $|\{M\eta>\lambda\}|\le Cq/\lambda$, and proves \eqref{eq:weak-field}.
\end{proof}

\leavevmode The weak-$L^2$ estimate controls the area of the large-field set.  To control the $L^2$ mass of a test function on that set, we use the following planar Ladyzhenskaya inequality \cite[Chap.~I, (4.8)]{FoiasEtAl2001}.

\begin{lemma}[Planar Ladyzhenskaya inequality]\label{lem:lady}
There is a universal constant $C<\infty$ such that, for every $v\in C_c^\infty(\R^2)$,
\begin{equation}
 \|v\|_{L^4}^2\le C\|v\|_{L^2}\|\nabla v\|_{L^2}.
\end{equation}
\end{lemma}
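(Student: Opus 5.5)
The planar Ladyzhenskaya inequality $\|v\|_{L^4}^2\le C\|v\|_{L^2}\|\nabla v\|_{L^2}$ follows from the one-dimensional Sobolev embedding in each coordinate direction. This is the classical Gagliardo--Nirenberg argument.

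\emph{Step 1: the square.} Set $u:=v^2\in C_c^\infty(\R^2)$, so $u$ is nonnegative and compactly supported. For fixed $x_2$, the fundamental theorem of calculus in $x_1$ gives
\begin{equation*}
 v(x_1,x_2)^2=\int_{-\infty}^{x_1}2v\,\partial_1v(s,x_2)\,\D s
 \le 2\int_{\R}|v\,\partial_1v|(s,x_2)\,\D s=:g_1(x_2).
\end{equation*}
In the same way, $v(x_1,x_2)^2\le g_2(x_1):=2\int_{\R}|v\,\partial_2v|(x_1,t)\,\D t$.

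\emph{Step 2: splitting $v^4$.} Write $v^4=v^2\cdot v^2\le g_1(x_2)g_2(x_1)$. Integrating over $\R^2$, the right side factorizes:
\begin{equation*}
 \int_{\R^2}v^4\le\left(\int_{\R}g_1\right)\left(\int_{\R}g_2\right)
 =4\|v\,\partial_1v\|_{L^1}\|v\,\partial_2v\|_{L^1}.
\end{equation*}

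\emph{Step 3: Cauchy--Schwarz.} For $k=1,2$, $\|v\,\partial_kv\|_{L^1}\le\|v\|_{L^2}\|\partial_kv\|_{L^2}\le\|v\|_{L^2}\|\nabla v\|_{L^2}$. Combining with Step 2 gives
\begin{equation*}
 \|v\|_{L^4}^4\le4\|v\|_{L^2}^2\|\nabla v\|_{L^2}^2,
\end{equation*}
and taking square roots yields the claim with $C=2$.

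No step is a genuine obstacle. The only points to check are that the product bound in Step 2 uses the two different one-variable suprema, and that the boundary terms in Step 1 vanish by compact support.

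The inequality extends by density to $v\in H^1(\R^2)$. That extension is the form in which it will be applied to cut-off test functions $v=\chi f\sqrt{\text{density}}$ on the large-field set.
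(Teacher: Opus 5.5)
Your proof is correct. It is the classical Ladyzhenskaya slicing argument: you bound $v^2$ by a function of $x_2$ alone and by a function of $x_1$ alone, then apply Fubini and Cauchy--Schwarz. The bound $\|\partial_1v\|_{L^2}\|\partial_2v\|_{L^2}\le\frac12\|\nabla v\|_{L^2}^2$ would even improve the constant to $C=\sqrt2$.

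The paper gives no proof of its own and simply cites \cite[Chap.~I, (4.8)]{FoiasEtAl2001}, where the inequality is standard. In the paper's application in \Cref{prop:uncertainty}, the function $\chi_k\psi$ already lies in $C_c^\infty$, so your density extension to $H^1$ is not needed there. If $v$ is complex-valued, run the same argument with $|v|^2$ in place of $v^2$.
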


The next estimate is the global confinement step.  On an annulus $|x|\simeq R$, the Gaussian part of the score has size $\gamma R$.  Substantial cancellation can occur only where the logarithmic force field $H$ is comparably large; \Cref{lem:weak-field} makes that exceptional set of order $R^{-2}$.  On its complement, the score controls $R^2|\psi|^2$, while on the exceptional set \Cref{lem:lady} prevents an $H^1$ function from concentrating enough mass to destroy the estimate.

\begin{proposition}[Quadratic-moment estimate from score control]\label{prop:uncertainty}
There is $C_{\gamma,Q}<\infty$ such that every $\psi\in C_c^\infty(\C\setminus\{y_j\})$ satisfies
\begin{equation}\label{eq:uncertainty}
 \int_{\C}|x|^2|\psi|^2\,\D x
 \le C_{\gamma,Q}\left(
 \int_{\C}|\nabla\psi|^2+\int_{\C}|F|^2|\psi|^2+\int_{\C}|\psi|^2\right).
\end{equation}
\end{proposition}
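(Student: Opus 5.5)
The plan is to split the plane into a bounded disk, on which the weight $|x|^2$ is harmless, and an exterior region decomposed into dyadic annuli. In each annulus the Gaussian part $\gamma x$ of $F$ dominates $H$ except on a small exceptional set, and that set is handled by \Cref{lem:weak-field} together with \Cref{lem:lady}. Fix $R_0\ge1$ to be chosen depending only on $(\gamma,Q)$. On $B_{R_0}$ we have trivially $\int_{B_{R_0}}|x|^2|\psi|^2\le R_0^2\int|\psi|^2$. For $k$ with $2^k\ge R_0$, set $A_k:=\{2^k\le|x|<2^{k+1}\}$ and
\begin{equation*}
 E_k:=\{x\in A_k:|H(x)|>\gamma|x|/2\}.
\end{equation*}
On $A_k\setminus E_k$ we have $|F|\ge\gamma|x|-|H|\ge\gamma|x|/2$, hence $|x|^2|\psi|^2\le4\gamma^{-2}|F|^2|\psi|^2$. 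Summed over $k$, this is bounded by the $F$-term of \eqref{eq:uncertainty}.

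It remains to control the exceptional sets. Since $E_k\subset\{|H|>\gamma2^{k-1}\}$, \Cref{lem:weak-field} gives $|E_k|\le Cq^2\gamma^{-2}4^{-k}$, uniformly in the configuration. Let $\chi_k$ be a smooth cutoff equal to one on $A_k$, supported in the enlarged annulus $\tilde A_k:=\{2^{k-1}\le|x|<2^{k+2}\}$, with $|\nabla\chi_k|\le C2^{-k}$. Put $v_k:=\chi_k\psi\in C_c^\infty(\R^2)$. By Cauchy--Schwarz and \Cref{lem:lady},
\begin{equation*}
 \int_{E_k}|x|^2|\psi|^2\le4^{k+1}|E_k|^{1/2}\|v_k\|_{L^4}^2
 \le Cq\gamma^{-1}2^{k}\|v_k\|_{L^2}\|\nabla v_k\|_{L^2}.
\end{equation*}
Now $2^k\|v_k\|_{L^2}\le C\|x\psi\|_{L^2(\tilde A_k)}$ and $\|\nabla v_k\|_{L^2}\le\|\nabla\psi\|_{L^2(\tilde A_k)}+C2^{-k}\|\psi\|_{L^2(\tilde A_k)}$. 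Young's inequality with a parameter $\varepsilon$ then bounds the right-hand side by
\begin{equation*}
 \varepsilon\|x\psi\|_{L^2(\tilde A_k)}^2+C_\varepsilon q^2\gamma^{-2}\bigl(\|\nabla\psi\|_{L^2(\tilde A_k)}^2+\|\psi\|_{L^2(\tilde A_k)}^2\bigr).
\end{equation*}

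Summing over $k$, the annuli $\tilde A_k$ overlap at most three times. Choosing $\varepsilon=1/6$ gives
\begin{equation*}
 \int|x|^2|\psi|^2\le\tfrac12\int|x|^2|\psi|^2+C_{\gamma,Q}\Bigl(\int|\nabla\psi|^2+\int|F|^2|\psi|^2+\int|\psi|^2\Bigr).
\end{equation*}
Since $\psi$ has compact support, the left-hand side is finite, so the first term on the right can be absorbed and \eqref{eq:uncertainty} follows. In this argument $R_0=1$ suffices, and all constants depend only on $\gamma$ and $q\le Q$.

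The main point to verify is the exceptional-set step. The localization must be dyadic so that the factor $4^{k}$ from $|x|^2$ is exactly compensated by $|E_k|^{1/2}\sim2^{-k}$, leaving a product $\|x\psi\|\,\|\nabla\psi\|$ that can be absorbed. A nonlocalized Ladyzhenskaya bound would lose this balance. One should also check that the cutoff error $2^{-k}\|\psi\|$ costs only the lower-order $\int|\psi|^2$ term, which it does, and that $v_k$ is an admissible test function for \Cref{lem:lady}, which holds because $\psi$ is smooth and compactly supported.
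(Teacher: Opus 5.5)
Your proof is correct and follows essentially the same route as the paper. Both arguments use dyadic annuli and bound the complement of the exceptional set, where $|H|$ is comparable to $\gamma|x|$, by the score term. On that exceptional set, the weak-$L^2$ area bound is paired with H\"older and the localized Ladyzhenskaya inequality, and Young's inequality plus bounded overlap of the enlarged annuli then allow absorption. You are also right that $R_0=1$ suffices; the paper's choice of $R_0$ large is harmless but not needed.
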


\begin{proof}
Let $R_k:=2^kR_0$ be the dyadic radii, and let $\mathcal A_k:=\{R_k\le|x|<2R_k\}$ and $\mathcal A_k^*:=\{R_k/2<|x|<4R_k\}$ denote the associated annuli and enlarged annuli.  Define the small-score exceptional set $E_k\subset\mathcal A_k$ by
\begin{equation}
 E_k:=\{x\in\mathcal A_k:|F(x)|<\gamma R_k/2\}.
\end{equation}
On $E_k$, $|H|>\gamma R_k/2$, so \eqref{eq:weak-field} gives $|E_k|\le C Q^2/(\gamma^2R_k^2)$.  On $\mathcal A_k\setminus E_k$,
\begin{equation}
 R_k^2\int_{\mathcal A_k\setminus E_k}|\psi|^2
 \le\frac4{\gamma^2}\int_{\mathcal A_k}|F|^2|\psi|^2.
\end{equation}
Choose $\chi_k\in C_c^\infty(\mathcal A_k^*)$, $\chi_k=1$ on $\mathcal A_k$, $|\nabla\chi_k|\le C/R_k$.  Hölder and \Cref{lem:lady} imply
\begin{equation}
 R_k^2\int_{E_k}|\psi|^2
 \le C_{\gamma,Q}R_k\|\chi_k\psi\|_2\|\nabla(\chi_k\psi)\|_2.
\end{equation}
For every $\delta>0$, Young's inequality gives
\begin{equation}
 R_k^2\int_{E_k}|\psi|^2
 \le\delta R_k^2\|\psi\|_{L^2(\mathcal A_k^*)}^2
 +C_{\gamma,Q,\delta}\|\nabla\psi\|_{L^2(\mathcal A_k^*)}^2
 +C_{\gamma,Q}\|\psi\|_{L^2(\mathcal A_k^*)}^2.
\end{equation}
The expanded annuli have bounded overlap and $\sum_kR_k^2\1_{\mathcal A_k^*}\le C|x|^2$ off a fixed core.  Sum, take $R_0$ large, then choose $\delta$ small and absorb the quadratic-moment term.
\end{proof}

\leavevmode The preceding proposition controls the quadratic moment of $\psi$ with respect to Lebesgue measure by the Lebesgue integrals of $|\nabla\psi|^2$, $|F|^2|\psi|^2$, and $|\psi|^2$.  Under the ground-state substitution in the proof below, identity \eqref{eq:ground-state} converts this control into the weighted moment bound needed for the global Poincar\'e and super-Poincar\'e inequalities.

\begin{corollary}[Quadratic-moment form bound]\label{cor:U-bound}
There is $C_U(\gamma,Q)<\infty$ such that, on the closed form domain,
\begin{equation}\label{eq:U-bound}
 \int_{\C}|x|^2f^2\,\D\nu
 \le C_U(\gamma,Q)\left(\int_{\C}|\nabla f|^2\,\D\nu+\int_{\C} f^2\,\D\nu\right).
\end{equation}
\end{corollary}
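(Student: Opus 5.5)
The plan is a ground-state substitution that turns \Cref{prop:uncertainty} into \eqref{eq:U-bound}, first for $f\in C_c^\infty(\C\setminus\{y_j:a_j>0\})$ and then on the closed domain by density. Write $\D\nu=e^{-V}\D x$ with $V=\gamma|x|^2/2-\log w+\log Z$, so that $\nabla V=F$. Set $\psi:=fe^{-V/2}$, which is again in $C_c^\infty(\C\setminus\{y_j\})$ because $V$ is smooth away from the points. Then $\int|x|^2|\psi|^2\,\D x=\int|x|^2f^2\,\D\nu$ and $\int|\psi|^2\,\D x=\int f^2\,\D\nu$, so only the gradient and score terms on the right of \eqref{eq:uncertainty} need to be converted.

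\textbf{Ground-state identity.} Since $\nabla\psi=(\nabla f-\tfrac12 Ff)e^{-V/2}$, expanding the square gives
\begin{equation}\label{eq:ground-state}
 \int|\nabla\psi|^2\,\D x=\int|\nabla f|^2\D\nu-\int fF\cdot\nabla f\,\D\nu+\frac14\int|F|^2f^2\D\nu .
\end{equation}
Integrating by parts, $-\int fF\cdot\nabla f\,e^{-V}=-\tfrac12\int F\cdot\nabla(f^2)e^{-V}=\tfrac12\int f^2(\operatorname{div}F-|F|^2)e^{-V}$. Here $\operatorname{div}F=2\gamma-\Delta\log w$, and $\Delta\log w=2\pi\sum_j a_j\delta_{y_j}$ vanishes on the support of $f$. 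This yields
\begin{equation}
 \int|\nabla\psi|^2\,\D x+\frac14\int|F|^2|\psi|^2\,\D x=\int|\nabla f|^2\D\nu+\gamma\int f^2\D\nu .
\end{equation}
Consequently both $\int|\nabla\psi|^2$ and $\int|F|^2|\psi|^2$ are bounded by $\max(4,4\gamma)\bigl(\int|\nabla f|^2\D\nu+\int f^2\D\nu\bigr)$. Inserting these bounds into \eqref{eq:uncertainty} gives \eqref{eq:U-bound} on the core with $C_U=C_{\gamma,Q}(1+8\max(1,\gamma))$.

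\textbf{Passage to the closed domain.} Take $f_n\in C_c^\infty(\C\setminus\{y_j\})$ converging to $f$ in form norm, and pass to a subsequence converging $\nu$-a.e. Fatou's lemma applied to the left side, together with convergence of the right side, gives \eqref{eq:U-bound} for $f$. Points with $a_j=0$ cause no difficulty: they were discarded, since $F$ is smooth there.

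\textbf{Main obstacle.} The only delicate point is the integration by parts, which requires checking that the distributional curvature of $\log w$ does not enter. It does not, because $f$ vanishes near each $y_j$. The favorable sign of $\operatorname{div}F$ plays no role, since both the $\gamma$ term and the point masses are handled without using it.
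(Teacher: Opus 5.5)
Your proposal is correct and follows the paper's proof: the same ground-state substitution $\psi=Z^{-1/2}fe^{-U/2}$, the same identity $\int|\nabla\psi|^2+\tfrac14\int|F|^2|\psi|^2=\int|\nabla f|^2\D\nu+\gamma\int f^2\D\nu$ obtained from $\Delta U=2\gamma$ off the points, and then \Cref{prop:uncertainty} followed by closure. Your Fatou argument along an a.e.-convergent subsequence of core approximants is a more explicit version of the closure step that the paper only cites from \Cref{lem:constants-domain}.
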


\begin{proof}
For a smooth compactly supported function $f$ whose support avoids the points $y_j$ with $a_j>0$, let $U$ denote the negative logarithm of the unnormalized density and let $\psi$ denote the ground-state transform:
\begin{equation}
 U=\frac\gamma2|x|^2-\log w,
 \qquad \psi=Z^{-1/2}fe^{-U/2}.
\end{equation}
Since $\Delta U=2\gamma$ off the points, integration by parts gives
\begin{equation}\label{eq:ground-state}
 \int_{\C}|\nabla f|^2\,\D\nu
 =\int_{\C}\left(|\nabla\psi|^2+\frac14|F|^2|\psi|^2-\gamma|\psi|^2\right)\D x.
\end{equation}
Apply \Cref{prop:uncertainty}.  The point-avoidance and closure argument in \Cref{lem:constants-domain} extends the resulting inequality to the closed domain.  Distributionally the omitted point masses contribute nonnegatively; because the support of $f$ avoids the points, no point boundary is present.
\end{proof}

\subsection{Global Poincar\'e and logarithmic Sobolev inequalities}

\leavevmode We now combine the local Sobolev estimate with the quadratic-moment bound.  Together they first yield a global Poincar\'e inequality and then a Gaussian-rate super-Poincar\'e inequality; the latter gives a defective logarithmic Sobolev inequality, which Rothaus centering tightens using the Poincar\'e bound.

\begin{proposition}[Global Poincar\'e inequality]\label{prop:weighted-pi}
Inequality \eqref{eq:weighted-pi} holds.
\end{proposition}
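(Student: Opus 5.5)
We combine \Cref{cor:U-bound}, which keeps mass from escaping to infinity, with the local Sobolev inequality \Cref{cor:local-gauss} on a single large disk. By \eqref{eq:weighted-scaling} it suffices to take $\gamma=1$, though we keep $\gamma$ to track constants. It also suffices to prove \eqref{eq:weighted-pi} for $f\in C_c^\infty(\C\setminus\{y_j:a_j>0\})$ and pass to the closure, since both sides are continuous in the form norm. Throughout, $\Var_\nu(f)\le\int|f-c|^2\D\nu$ for every constant $c$. We take $c=f_{B_R,\nu}$ for a radius $R\ge R_{\gamma,Q}$ to be fixed later; by \Cref{lem:norm-tail}, $\nu(B_R)\ge1/2$.

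\textbf{Step 1 (splitting).} Write $g:=f-f_{B_R,\nu}$. Since constants lie in the domain by \Cref{lem:constants-domain}, $g$ is in the domain and $\nabla g=\nabla f$. Split
\begin{equation*}
 \int|g|^2\D\nu=\int_{B_R}|g|^2\D\nu+\int_{|x|\ge R}|g|^2\D\nu.
\end{equation*}
For the inner term, Hölder ($2\kappa_Q\ge2$) and \eqref{eq:local-sob-gauss} give
\begin{equation*}
 \int_{B_R}|g|^2\D\nu\le\nu(B_R)\Big(\fint_{B_R}|g|^{2\kappa_Q}\D\nu\Big)^{1/\kappa_Q}\le C_{\gamma,Q}^2e^{2C_{\gamma,Q}R^2}R^2\frac{\nu(B_R)}{\nu(B_{\Lambda_QR})}\int|\nabla f|^2\D\nu .
\end{equation*}
The ratio $\nu(B_R)/\nu(B_{\Lambda_QR})$ is at most $1$, so the inner term is bounded by $K(R)\int|\nabla f|^2\D\nu$, where $K(R)$ depends only on $(\gamma,Q,R)$.

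\textbf{Step 2 (exterior via the moment bound).} For the outer term, apply \eqref{eq:U-bound} to $g$:
\begin{equation*}
 \int_{|x|\ge R}|g|^2\D\nu\le R^{-2}\int|x|^2g^2\D\nu\le \frac{C_U}{R^2}\Big(\int|\nabla f|^2\D\nu+\int g^2\D\nu\Big).
\end{equation*}
Choose $R$ large enough, depending only on $(\gamma,Q)$, that $C_U/R^2\le1/2$. The $\int g^2\D\nu$ term can then be absorbed into the left side, since $\int g^2\D\nu<\infty$ for $f$ in the domain. This gives $\int g^2\D\nu\le 2(K(R)+C_U/R^2)\int|\nabla f|^2\D\nu$, which is \eqref{eq:weighted-pi}. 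The constant does not depend on the configuration, because $K$, $C_U$ and $R_{\gamma,Q}$ depend only on $(\gamma,Q)$.

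\textbf{Main obstacle.} The substantive work sits in \Cref{cor:U-bound}, which is already available. In the present argument the point needing care is the uniformity of the local step. \Cref{cor:local-gauss} uses the gradient on the dilate $B_{\Lambda_QR}$ but averages relative to $\nu(B_{\Lambda_QR})$, and one has to check that no configuration-dependent factor enters. This works because the local weighted inequality is scale- and translation-invariant with constants depending only on $Q$, and the Gaussian factor on $B_{\Lambda_QR}$ contributes only $e^{CR^2}$ with $R=R(\gamma,Q)$.

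A second check is that absorbing $\int g^2\D\nu$ is legitimate, i.e.\ that \eqref{eq:U-bound} applies to $g$ and not only to compactly supported $f$. This is handled by approximating in form norm, using \Cref{lem:constants-domain}.
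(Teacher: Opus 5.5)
Your proposal is correct and follows the paper's proof essentially step for step. You compare with the constant $f_{B_R,\nu}$ and bound the inner part by the local Poincar\'e consequence of \eqref{eq:local-sob-gauss}. You bound the exterior by $R^{-2}$ times \eqref{eq:U-bound} applied to $f-f_{B_R,\nu}$, which is legitimate by \Cref{lem:constants-domain}, and absorb once $R^2\ge 2C_U$. Your explicit H\"older step and the observation $\nu(B_R)/\nu(B_{\Lambda_QR})\le1$ simply spell out the local constant $C_{\gamma,Q,R}$ that the paper leaves implicit.
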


\begin{proof}
Choose $R\ge R_{\gamma,Q}$ large enough that the coefficient below can be absorbed, and let $c=f_{B_R,\nu}$.  The local Poincar\'e consequence of \eqref{eq:local-sob-gauss} gives
\begin{equation}
 \int_{B_R}|f-c|^2\,\D\nu
 \le C_{\gamma,Q,R}\int_{\C}|\nabla f|^2\,\D\nu.
\end{equation}
Since $c$ is a permissible comparison constant,
\begin{equation}
 \Var_\nu(f)\le\int_{\C}|f-c|^2\,\D\nu=:M.
\end{equation}
By \Cref{lem:constants-domain}, $f-c$ belongs to the closed form domain, so \Cref{cor:U-bound} applies to it.  Hence,
\begin{equation}
 \int_{B_R^c}|f-c|^2\,\D\nu
 \le\frac{C_U}{R^2}\left(\int_{\C}|\nabla f|^2\,\D\nu+M\right).
\end{equation}
Choose $R^2\ge2C_U$.  The last $M/2$ term is absorbed, proving the Poincar\'e inequality first on the core and then by closure.
\end{proof}

\begin{proposition}[Gaussian-rate super-Poincar\'e inequality]\label{prop:super-pi}
There are $A_{\gamma,Q},B_{\gamma,Q}<\infty$ such that, for all $s>0$,
\begin{equation}\label{eq:super-pi}
 \int_{\C} f^2\,\D\nu
 \le s\int_{\C}|\nabla f|^2\,\D\nu
 +A_{\gamma,Q}e^{B_{\gamma,Q}/s}\left(\int_{\C}|f|\,\D\nu\right)^2.
\end{equation}
\end{proposition}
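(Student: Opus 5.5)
We combine the local Sobolev estimate of \Cref{cor:local-gauss} on a disk $B_R$ with the quadratic-moment bound of \Cref{cor:U-bound} outside $B_R$, and then optimize $R$ in terms of $s$. It suffices to work with $f$ in the collision-free core; closure extends the result. Fix $R\ge \max\{1,R_{\gamma,Q}\}$ and split
\begin{equation*}
 \int_{\C}f^2\,\D\nu=\int_{B_R}f^2\,\D\nu+\int_{B_R^c}f^2\,\D\nu .
\end{equation*}
For the exterior piece, \Cref{cor:U-bound} gives
\begin{equation*}
 \int_{B_R^c}f^2\,\D\nu\le \frac{C_U}{R^2}\Big(\int_{\C}|\nabla f|^2\,\D\nu+\int_{\C}f^2\,\D\nu\Big).
\end{equation*}
Once $R^2\ge 4C_U$, the last term is absorbed into the left-hand side at the cost of a factor $4/3$.

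For the interior piece, write $\nu_R$ for the normalized restriction of $\nu$ to $B_R$. Since $\nu(B_R)\ge 1/2$ by \Cref{lem:norm-tail}, we have $\int_{B_R}f^2\,\D\nu\le \int f^2\,\D\nu_R$ and $|f_{B_R,\nu}|\le 2\int_{\C}|f|\,\D\nu$. Set $g=f-f_{B_R,\nu}$. By \eqref{eq:local-sob-gauss}, $g$ satisfies an $L^{2\kappa_Q}$--$\dot H^1$ local Sobolev bound on $\nu_R$ with constant $K_R:=C e^{CR^2}R$. Here $\int_{B_{\Lambda_QR}}|\nabla f|^2\,\D\nu$ is normalized by $\nu(B_{\Lambda_QR})\ge1/2$, so it is bounded by twice the global energy.

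The standard Nash-type interpolation (Hölder between $L^1$ and $L^{2\kappa}$) gives
\begin{equation*}
 \|g\|_{L^2(\nu_R)}\le \|g\|_{L^1(\nu_R)}^{\theta}\|g\|_{L^{2\kappa}(\nu_R)}^{1-\theta},\qquad \theta=\theta(\kappa_Q)\in(0,1).
\end{equation*}
Inserting the local Sobolev bound and applying Young's inequality, for every $\epsilon>0$,
\begin{equation*}
 \|g\|_{L^2(\nu_R)}^2\le \epsilon\int_{\C}|\nabla f|^2\,\D\nu+C\,\epsilon^{-p}K_R^{q}\,\|g\|_{L^1(\nu_R)}^2,
\end{equation*}
with exponents $p,q$ depending only on $\kappa_Q$. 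Also $\|g\|_{L^1(\nu_R)}\le 4\int_{\C}|f|\,\D\nu$. Adding back the mean gives
\begin{equation*}
 \int_{B_R}f^2\,\D\nu\le 2\epsilon\,\cE+C\big(1+\epsilon^{-p}K_R^{q}\big)\Big(\int_{\C}|f|\,\D\nu\Big)^2,
\end{equation*}
where $\cE$ denotes the Dirichlet energy.

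Now choose the parameters. Collecting terms,
\begin{equation*}
 \int_{\C}f^2\,\D\nu\le C\big(\epsilon+R^{-2}\big)\cE+C\,\epsilon^{-p}e^{C'R^2}\Big(\int_{\C}|f|\,\D\nu\Big)^2 .
\end{equation*}
For small $s$, take $R^2\simeq 1/s$ (still at least $4C_U$) and $\epsilon\simeq s$. The remainder then becomes $Cs^{-p}e^{C'/s}\le A e^{B/s}$. For large $s$, the Poincar\'e inequality \Cref{prop:weighted-pi} already gives
\begin{equation*}
 \int_{\C}f^2\,\D\nu\le C_{\rm wt}\,\cE+\Big(\int_{\C}|f|\,\D\nu\Big)^2,
\end{equation*}
which is the claim once $s\ge C_{\rm wt}$. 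For intermediate $s$, use monotonicity in $s$ of the right-hand side of \eqref{eq:super-pi}.

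The main obstacle is bookkeeping rather than any single idea. One must track that the constant from \Cref{cor:local-gauss} grows only like $e^{CR^2}$ and that the choice $R^2\sim 1/s$ needed to make the exterior energy coefficient $C_U/R^2$ smaller than $s$ produces exactly a Gaussian rate $e^{B/s}$. One must also check that the polynomial losses $\epsilon^{-p}$ and $R^q$ are absorbed into $A e^{B/s}$, and that all constants depend only on $(\gamma,Q)$ through $\kappa_Q$, $\Lambda_Q$, $C_U$ and $R_{\gamma,Q}$.
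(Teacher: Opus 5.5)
Your argument is correct and is essentially the paper's proof. It uses the local Sobolev bound of \Cref{cor:local-gauss} with $L^1$--$L^{2\kappa_Q}$ interpolation inside a disk, \eqref{eq:U-bound} outside, the scalings $R^2\sim1/s$ and $\epsilon\sim s$, and \Cref{prop:weighted-pi} for large $s$; the only real difference is that you split $\int_{\C}f^2\,\D\nu$ sharply over $B_R$ and $B_R^c$ (centering $f$ on $B_R$) rather than writing $f=\chi f+(1-\chi)f$ with a smooth cutoff, which spares you the cutoff-gradient terms. One wording fix: the right-hand side of \eqref{eq:super-pi} is not monotone in $s$, so for intermediate $s$ what you use is that the bound at $s_0$ gives the bound for all $s\ge s_0$ after replacing $A$ by $Ae^{B/s_0}$ (the paper's ``increase $A$ and $B$'') --- and in fact your choice $\epsilon=s/4$, $R^2=\max\{4C_U/s,4C_U,R_{\gamma,Q}^2,1\}$ already covers every $s>0$ without this step.
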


\begin{proof}
For $g$ supported in $B_{2R}$, the Gaussian local Sobolev inequality \eqref{eq:local-sob-gauss}, applied on $B_{2R}$, and interpolation between $L^1(\nu)$ and $L^{2\kappa_Q}(\nu)$ give, for $u\in(0,1]$,
\begin{equation}
 \int_{\C} g^2\,\D\nu
 \le u\int_{\C}|\nabla g|^2\,\D\nu
 +C_{\gamma,Q}e^{C_{\gamma,Q}R^2}(1+u^{-\eta_Q})
   \left(\int_{\C}|g|\,\D\nu\right)^2,
\end{equation}
where $\eta_Q=\kappa_Q/(\kappa_Q-1)$.  Let $\chi$ equal one on $B_R$, vanish outside $B_{2R}$, and satisfy $|\nabla\chi|\le2/R$.  Set $g=\chi f$ and $h=(1-\chi)f$.  Since $h=0$ on $B_R$, \eqref{eq:U-bound} gives, for large $R$,
\begin{equation}
 \int_{\C} h^2\,\D\nu\le\frac{2C_U}{R^2}\int_{\C}|\nabla h|^2\,\D\nu.
\end{equation}
The cutoff-gradient bounds give
\begin{equation}
 \int_{\C}|\nabla(\chi f)|^2\,\D\nu
 +\int_{\C}|\nabla((1-\chi)f)|^2\,\D\nu
 \le 2\int_{\C}|\nabla f|^2\,\D\nu
 +\frac{C}{R^2}\int_{\C} f^2\,\D\nu.
\end{equation}
Choose $R^2=L/s$ and $u=\varepsilon s$.  The outer estimate and the cutoff terms then contribute a multiple of $s/L$ to the coefficient of $\int_{\C} f^2\,\D\nu$; first, take $L$ so large that this coefficient is at most $1/4$.  Next, take $\varepsilon$ so small that the local energy coefficient is at most $s/2$.  Moving the remaining $\int_{\C} f^2\,\D\nu$ term to the left gives \eqref{eq:super-pi} for all sufficiently small $s$, since the coefficient of $\left(\int_{\C}|f|\,\D\nu\right)^2$ is bounded by $Ae^{B/s}$.  For larger $s$, keep this coefficient fixed; for all sufficiently large $s$, \Cref{prop:weighted-pi} supplies \eqref{eq:super-pi} directly.  After increasing $A$ and $B$, the asserted bound follows for every $s>0$.
\end{proof}

We shall use twice the following centering inequality of Rothaus: for every probability measure $\mathsf P$ on a measurable space $\mathsf X$, writing $\E_{\mathsf P}[f]:=\int_{\mathsf X} f\,\D\mathsf P$, and every real-valued $f$ for which the right-hand side is finite,
\begin{equation}\label{eq:rothaus-centering}
 \Ent_{\mathsf P}(f^2)\le \Ent_{\mathsf P}((f-\E_{\mathsf P}[f])^2)+2\Var_{\mathsf P}(f).
\end{equation}
See Rothaus \cite{Rothaus1985}; for this probability-space formulation, see Barthe and Roberto \cite[Lemma~4]{BartheRoberto2003}.

\begin{proof}[Completion of \Cref{thm:weighted}]
On the core, symmetry is immediate and the chain rule shows that normal contractions decrease the energy; these properties pass to the form closure.  Thus, $\cE_\nu$ is a symmetric Dirichlet form.  \Cref{prop:super-pi} supplies the required super-Poincar\'e rate bounded by $A_{\gamma,Q}e^{B_{\gamma,Q}/s}$.  Thus, the hypotheses of Wang's super-Poincar\'e-to-$F$-Sobolev correspondence \cite[Theorems~3.3.1 and~3.3.3]{Wang2005} are satisfied, and the correspondence yields a defective logarithmic Sobolev inequality; see also \cite[discussion following equation~(1.4)]{CGWW2009}.  Hence, there are constants depending only on $(\gamma,Q)$ such that
\begin{equation}
 \Ent_\nu(f^2)
 \le A_{\rm wt}(\gamma,Q)\int_{\C}|\nabla f|^2\,\D\nu
 +B_{\rm wt}(\gamma,Q)\int_{\C} f^2\,\D\nu.
\end{equation}
The inequality is first obtained on the core and extends to $D(\cE_\nu)$ by form approximation and lower semicontinuity of entropy.  By \Cref{lem:constants-domain}, $1\in D(\cE_\nu)$ with zero energy, so $f-\E_\nu[f]\in D(\cE_\nu)$ and $\cE_\nu(f-\E_\nu[f])=\cE_\nu(f)$.  Apply \eqref{eq:rothaus-centering} with $\mathsf P=\nu$ and use \Cref{prop:weighted-pi} to obtain
\begin{equation}
 \Ent_\nu(f^2)
 \le\left[A_{\rm wt}+(B_{\rm wt}+2)C_{\rm wt}\right]
 \int_{\C}|\nabla f|^2\,\D\nu.
\end{equation}
This proves \eqref{eq:weighted-lsi} with
\begin{equation}
 \rho_{\rm wt}(\gamma,Q)
 =\frac2{A_{\rm wt}(\gamma,Q)+(B_{\rm wt}(\gamma,Q)+2)C_{\rm wt}(\gamma,Q)}.
\end{equation}
\end{proof}


\section{Poincar\'e inequality on the full labeled space}\label{sec:full-pi}

\begin{theorem}[Poincar\'e inequality for the full labeled Gibbs measure]\label{thm:full-pi}
For every fixed $\beta>0$, every $N\ge2$, and every real-valued $f\in\mathsf D_{N,\beta}$,
\begin{equation}\label{eq:full-pi}
 \Var_{\mathbb P_{N,\beta}}(f)
 \le C_P^{\rm all}(\beta)\cE_{N,\beta}(f),
 \qquad C_P^{\rm all}(\beta)=C_{\rm sym}(\beta)+C_{\rm rel}(\beta)<\infty.
\end{equation}
The component constants $C_{\rm sym}(\beta)$ and $C_{\rm rel}(\beta)$ are defined below and depend only on $\beta$; in particular, $C_P^{\rm all}(\beta)$ is independent of $N$.
\end{theorem}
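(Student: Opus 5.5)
We follow the two-part strategy outlined in \Cref{sec:intro-method} and work on the core $C_c^\infty(\Omega_N)$, extending to $\mathsf D_{N,\beta}$ at the end by closure. Let $\Pi f:=\frac1{N!}\sum_{\sigma\in S_N}f\circ\sigma$ denote permutation symmetrization. By exchangeability of $\mathbb P_{N,\beta}$, $\Pi$ is the conditional expectation onto permutation-invariant functions. Hence
\begin{equation*}
 \Var(f)=\Var(\Pi f)+\E\bigl[(f-\Pi f)^2\bigr].
\end{equation*}
Because $\Pi$ commutes with the Euclidean gradient up to relabeling coordinates, $\cE_{N,\beta}(\Pi f)\le\cE_{N,\beta}(f)$. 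It therefore suffices to prove two estimates: $\Var(\Pi f)\le C_{\rm sym}\cE(\Pi f)$ for symmetric functions, and $\E[(f-\Pi f)^2]\le C_{\rm rel}\cE(f)$.

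\textbf{Symmetric part.} We use the ground-state/$\bar\partial$ route. On $\Omega_N$ we have $\log|\Delta_N|^{a_N}=\frac{a_N}{2}\log|\Delta_N|^2$, which is pluriharmonic. The potential $\varphi=\beta|Z|^2/2-a_N\log|\Delta_N|$ therefore has complex Hessian $i\partial\bar\partial\varphi=\frac{\beta}{2}\,i\partial\bar\partial|Z|^2$ away from collisions, so it is uniformly plurisubharmonic with constant independent of $N$. Next, write a real $f$ as $\Re$ of something, or rather use $|\nabla f|^2=4\sum_i|\partial_{\bar z_i}f|^2$ for real $f$. Hörmander's $L^2$ estimate for $\bar\partial$ with weight $e^{-\varphi}$ then gives
\begin{equation*}
 \Var(f)\le\frac{C}{\beta}\int\sum_i|\bar\partial_if|^2\,\D\mathbb P_{N,\beta}.
\end{equation*}
This works by taking the $L^2(e^{-\varphi})$-minimal solution $u$ of $\bar\partial u=\bar\partial f$, so that $f-u$ is holomorphic, and using the fact that the only real-valued holomorphic functions are constants. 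To make this rigorous, the singular set has to be handled. The collision set has zero capacity (\Cref{prop:capacity}), and $\varphi$ is plurisubharmonic on all of $\C^N$ because $-\log|\Delta_N|$ enters with a negative sign. Indeed, $-a_N\log|\Delta_N|$ is plurisuperharmonic, so we should instead use the weight $\beta|Z|^2/2$ twisted by the factor $|\Delta_N|^{a_N}$, in the Demailly form that allows a twisting function. This is where permutation symmetry may be used: for symmetric $f$ one can divide by the Vandermonde structure. I expect this step to be delicate, and I would follow the Demailly-twisted estimate to obtain $C_{\rm sym}(\beta)\lesssim\beta^{-1}$. If that fails, I would fall back to the center-of-mass factorization of \Cref{prop:com} combined with curvature-dimension arguments on the symmetrized quotient.

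\textbf{Label-dependent part.} For fixed $Z$, apply the spectral gap of random transpositions on $S_N$ (gap $2/N$ for the uniform transposition walk) to the function $\sigma\mapsto f(Z\circ\sigma)$. Averaging over $Z$ and using exchangeability gives
\begin{equation*}
 \E[(f-\Pi f)^2]\le\frac{1}{N}\sum_{i<j}\E\bigl[(f-f\circ\tau_{ij})^2\bigr]\cdot c.
\end{equation*}
For each pair $(i,j)$, condition on $c=(z_i+z_j)/\sqrt2$ and on the $z_k$ with $k\ne i,j$. The conditional law of $u$ is the weighted Gaussian of \Cref{lem:relative-poly}, with variance parameter $\beta$ and total exponent $<2\beta$, and it is invariant under $u\mapsto-u$, which is exactly the action of $\tau_{ij}$. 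Applying \Cref{thm:weighted} gives
\begin{equation*}
 \E[(f-f\circ\tau_{ij})^2\mid\cdot]\le 4\Var(f\mid\cdot)\le4C_{\rm wt}(\beta,2\beta)\,\E[|\nabla_uf|^2\mid\cdot].
\end{equation*}
Since $|\nabla_uf|^2\le|\nabla_{z_i}f|^2+|\nabla_{z_j}f|^2$, summing over pairs gives each $|\nabla_{z_i}f|^2$ with multiplicity $N-1$. This factor cancels the $1/N$, so $C_{\rm rel}\lesssim C_{\rm wt}(\beta,2\beta)$.

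The main obstacle is the symmetric $\bar\partial$ step: obtaining an $N$-independent constant despite the indefinite and singular Vandermonde interaction, and justifying the estimate across collisions.
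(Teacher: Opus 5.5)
Your label-dependent part matches the paper's argument: the random-transposition gap, the even relative-coordinate conditional with total exponent below $2\beta$, the factor $4$ from the odd part, and gradient counting to cancel the $1/N$. Your cruder bound $(N-1)|\nabla f|^2$ in place of $\frac N2|\nabla f|^2$ is harmless. The gap is in the symmetric part, and it is more than a matter of rigor. The weighted $\bar\partial$ estimate only bounds the distance to the weighted holomorphic space, $\|f-Q_{N,\beta}f\|_2^2\le\frac2\beta\sum_j\|\bar\partial_jf\|_2^2$. The remainder $h=f-u=Q_{N,\beta}f$ is holomorphic but complex-valued, so ``real holomorphic functions are constant'' gives no control of $\|h-\mathrm{const}\|_2$. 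The missing idea is the following. Since $f$ is real, $h=Q\bar f=Q\bar h+Q\bar u$. If $Q\bar h$ is constant, then $\|h-h(0)\|_2\le\|u\|_2$, and hence $\Var(f)\le2\|u\|_2^2\le\beta^{-1}\cE_{N,\beta}(f)$. For entire $F,G$, this orthogonality of holomorphic and antiholomorphic functions modulo constants comes from the common-phase invariance $Z\mapsto e^{it}Z$, which gives $\int FG\,\D\mathbb P_{N,\beta}=F(0)G(0)$.

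You also need to settle where $\bar\partial$ is solved. Your two sentences on plurisubharmonicity contradict each other, and the second is the correct one: $\Phi_{N,\beta}$ is not plurisubharmonic across collisions. The paper solves on $\Omega_N$, which is weakly pseudoconvex with exhaustion $|Z|^2+|\Delta_N|^{-2}$, and where the Levi form is exactly $\frac\beta2I_N$ by pluriharmonicity of $\log|\Delta_N|$. The price is that $\mathcal A^2_{N,\beta}$ contains functions with poles on collisions. For example, $(z_1-z_2)^{-1}\in\mathcal A^2_{N,\beta}$ for every $a_N>0$, and $\int(z_1-z_2)^{-1}(z_1-z_2)\,\D\mathbb P_{N,\beta}=1$, so the orthogonality above fails for labeled functions. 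This is where symmetry enters, not by ``dividing by the Vandermonde.'' When $a_N\le2$, an $L^2$-holomorphic function has at most a simple pole along $\{z_i=z_j\}$. Permutation invariance makes it even in $z_i-z_j$, which removes that pole, and Hartogs extends it across the codimension-two multiple collisions. Since $Q$ commutes with permutations, $Q\bar h$ is also symmetric, hence entire, and the phase identity forces $Q\bar h=0$ after centering. For $a_N=\beta/N>2$, symmetric functions with double poles (e.g.\ $\sum_{i<j}(z_i-z_j)^{-2}$) lie in $\mathcal A^2_{N,\beta}$, and the argument breaks. The paper covers these finitely many $N$ with a fixed-$N$ compact-resolvent gap. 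That is why $C_{\rm sym}(\beta)$ is the maximum of $\beta^{-1}$ and finitely many fixed-$N$ Poincar\'e constants, not simply $\lesssim\beta^{-1}$. Your curvature-dimension fallback is not available, because the Hessian of $-\log|\Delta_N|$ is indefinite.
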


For $f\in L^2(\mathbb P_{N,\beta})$, define the permutation symmetrizer by
\begin{equation}\label{eq:symmetrizer}
 \sfS f(Z):=\frac1{N!}\sum_{\sigma\in S_N}
 f(z_{\sigma(1)},\ldots,z_{\sigma(N)}).
\end{equation}
Since $\mathbb P_{N,\beta}$ is exchangeable, $\sfS$ is the orthogonal projection onto the permutation-invariant subspace; equivalently, it is conditional expectation given the unordered configuration.  Thus,
\begin{equation}
 F(Z)=\sum_{i=1}^N\varphi(z_i)\quad\Longrightarrow\quad \sfS F=F,
\end{equation}
whereas
\begin{equation}
 f(Z)=\varphi(z_1)-\varphi(z_2)\quad\Longrightarrow\quad \sfS f=0.
\end{equation}
The corresponding variance decomposition is
\begin{equation}\label{eq:variance-permutation-decomposition}
 \Var_{\mathbb P_{N,\beta}}(f)
 =\Var_{\mathbb P_{N,\beta}}(\sfS f)+\|f-\sfS f\|_2^2.
\end{equation}
The two terms require different arguments.  Weighted $\bar\partial$ estimates and symmetry control the first.  A two-particle relative-coordinate conditional and random transpositions control the second.  The displayed orthogonal decomposition does not itself furnish a corresponding decomposition of entropy, so \Cref{sec:defective} uses a separate argument.

\subsection{Permutation-invariant functions via weighted \texorpdfstring{$\bar\partial$}{d-bar}}

\leavevmode We first control the permutation-invariant term in the variance decomposition.  A weighted $\bar\partial$ estimate bounds the component orthogonal to the holomorphic subspace; pole removal and common-phase invariance control the holomorphic component when $\beta/N\le2$.  For the finitely many smaller particle numbers for which $\beta/N>2$, a compact-resolvent argument supplies a fixed-$N$ Poincar\'e gap.

Define the effective potential $\Phi_{N,\beta}$ by
\begin{equation}
 \Phi_{N,\beta}(Z):=\frac\beta2|Z|^2-a_N\log|\Delta_N(Z)|.
\end{equation}
On the collision complement $\Omega_N$, $\mathbb P_{N,\beta}$ is proportional to $e^{-\Phi_{N,\beta}}\D Z$.

\leavevmode With $z_j=x_j+iy_j$, define the Wirtinger derivatives $\partial_j$ and $\bar\partial_j$ by
\begin{equation}
 \partial_j:=\frac12\left(\frac\partial{\partial x_j}-i\frac\partial{\partial y_j}\right),
 \qquad
 \bar\partial_j:=\frac12\left(\frac\partial{\partial x_j}+i\frac\partial{\partial y_j}\right).
\end{equation}
\leavevmode For a real-valued $u\in C^2(\Omega_N)$, the Levi form is represented by the Hermitian matrix $(\partial_j\bar\partial_k u)_{j,k}$.  The function $u$ is pluriharmonic if this matrix vanishes, plurisubharmonic if it is positive semidefinite, and strictly plurisubharmonic if it is positive definite.  An exhaustion has relatively compact strict sublevel sets; a complex manifold admitting a smooth plurisubharmonic exhaustion is weakly pseudoconvex \cite[Chap.~I, Defs.~6.12 and~6.13(a); Chap.~VIII, Def.~5.1]{DemaillyCADG}.

\begin{lemma}[Levi form and weak pseudoconvexity]\label{lem:levi}
On $\Omega_N$,
\begin{equation}
 (\partial_j\bar\partial_k\Phi_{N,\beta})_{j,k}=\frac\beta2I_N.
\end{equation}
Moreover, $|Z|^2+|\Delta_N(Z)|^{-2}$ is a smooth strictly plurisubharmonic exhaustion of $\Omega_N$.
In particular, $\Omega_N$ is weakly pseudoconvex.
\end{lemma}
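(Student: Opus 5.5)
The plan is to compute the Levi form of each piece of $\Phi_{N,\beta}$ separately. I will then exhibit the exhaustion and verify its properties directly.

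\textbf{Step 1: the Levi form.} For the Gaussian part, $|Z|^2=\sum_j z_j\bar z_j$, so $\partial_j\bar\partial_k|Z|^2=\delta_{jk}$. This contributes $\frac\beta2 I_N$.

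For the interaction, on $\Omega_N$ the function $\Delta_N$ is holomorphic and nonvanishing. Hence, locally,
\[
\log|\Delta_N|=\tfrac12\bigl(\log\Delta_N+\overline{\log\Delta_N}\bigr)
\]
for a local holomorphic branch. It is therefore pluriharmonic: $\partial_j\bar\partial_k\log|\Delta_N|=0$. Equivalently, one can use $\partial_j\bar\partial_k\log|h|^2=0$ for holomorphic $h\ne0$, which follows because $\bar\partial_k\log|h|^2=\overline{\partial_k h}/\bar h$ is antiholomorphic. This gives $(\partial_j\bar\partial_k\Phi_{N,\beta})=\frac\beta2I_N$.

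\textbf{Step 2: the exhaustion.} Set $\psi:=|Z|^2+|\Delta_N|^{-2}$, which is smooth on $\Omega_N$.

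For strict plurisubharmonicity, write $|\Delta_N|^{-2}=e^{-\log|\Delta_N|^2}=e^{v}$ with $v$ pluriharmonic. Then
\[
\partial_j\bar\partial_k e^{v}=e^{v}\bigl(\partial_j\bar\partial_k v+\partial_jv\,\bar\partial_kv\bigr)=e^v\,\partial_jv\,\overline{\partial_kv},
\]
using that $v$ is real so $\bar\partial_kv=\overline{\partial_kv}$. This is a positive semidefinite rank-one matrix. Adding $I_N$ from $|Z|^2$ gives a positive definite Levi form.

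For the exhaustion property, the sublevel set $\{\psi<c\}$ satisfies $|Z|^2<c$ and $|\Delta_N|>c^{-1/2}$. Its closure in $\C^N$ is contained in $\{|Z|^2\le c,\ |\Delta_N|\ge c^{-1/2}\}$, which is compact and disjoint from $\mathcal C_N=\{\Delta_N=0\}$. It is therefore a compact subset of $\Omega_N$, so the sublevel sets are relatively compact in $\Omega_N$.

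\textbf{Step 3: conclusion.} Weak pseudoconvexity then follows from the cited definition in \cite{DemaillyCADG}, since a smooth plurisubharmonic exhaustion exists.

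The only mildly delicate point is the pluriharmonicity of $\log|\Delta_N|$ without a global branch of $\log\Delta_N$. This is handled by the local computation, or by the formula for $\bar\partial_k\log|h|^2$, which needs no branch at all. Everything else is routine.
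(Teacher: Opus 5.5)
Your proof is correct and takes essentially the same route as the paper: pluriharmonicity of $\log|\Delta_N|$ via local holomorphic logarithms gives the Levi-form identity, and $|Z|^2$ supplies strictness on top of the plurisubharmonic term $|\Delta_N|^{-2}$. For that term the paper simply notes $|\Delta_N|^{-2}=|\Delta_N^{-1}|^2$ with $\Delta_N^{-1}$ holomorphic, whereas you compute its rank-one Levi form $e^{v}\,\partial_jv\,\overline{\partial_kv}$ directly, and your explicit sublevel-set check is a more detailed form of the paper's remark that the function diverges at infinity and at $\mathcal C_N$.
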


\begin{proof}
Since $\Delta_N$ is nonvanishing and holomorphic on $\Omega_N$, $\log|\Delta_N|$ is locally the real part of a holomorphic logarithm, hence is pluriharmonic; this gives the Levi-form identity.  Moreover, $\Delta_N^{-1}$ is holomorphic on $\Omega_N$, so $|\Delta_N|^{-2}=|\Delta_N^{-1}|^2$ is plurisubharmonic.  Adding $|Z|^2$ makes this function strictly plurisubharmonic.  It diverges as $|Z|\to\infty$ or as $Z$ approaches $\mathcal C_N$, and hence is an exhaustion of $\Omega_N$.
\end{proof}

Define the weighted holomorphic space $\mathcal A^2_{N,\beta}$ by
\begin{equation}
 \mathcal A^2_{N,\beta}:=\{h\in L^2(\mathbb P_{N,\beta};\C):h\text{ is holomorphic on }\Omega_N\}.
\end{equation}
Let $Q_{N,\beta}$ denote the orthogonal projection onto this closed space.

\begin{proposition}[Distance to the weighted holomorphic space]\label{prop:dbar}
For $f\in C_c^\infty(\Omega_N;\C)$,
\begin{equation}\label{eq:dbar}
 \|f-Q_{N,\beta}f\|_2^2
 \le\frac2\beta\sum_{j=1}^N\|\bar\partial_jf\|_2^2.
\end{equation}
For real $f$,
\begin{equation}
 \|f-Q_{N,\beta}f\|_2^2\le\frac1{2\beta}\cE_{N,\beta}(f).
\end{equation}
\end{proposition}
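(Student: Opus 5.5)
This is Hörmander's $L^2$ estimate for $\bar\partial$ on the weakly pseudoconvex domain $\Omega_N$, with weight $\Phi_{N,\beta}$. By \Cref{lem:levi}, the Levi form of this weight is exactly $\frac\beta2 I_N$.

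Set $u:=f-Q_{N,\beta}f$, which is orthogonal to $\mathcal A^2_{N,\beta}$ in $L^2(e^{-\Phi_{N,\beta}}\D Z)$; the constant $\mathsf Z_{N,\beta}$ is harmless. Since $Q_{N,\beta}f$ is holomorphic, $\bar\partial u=\bar\partial f=:g=\sum_j\bar\partial_jf\,d\bar z_j$. This is a smooth, compactly supported, $\bar\partial$-closed $(0,1)$-form on $\Omega_N$.

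Apply Hörmander's theorem on the weakly pseudoconvex (hence Stein) domain $\Omega_N$ with plurisubharmonic weight $\Phi_{N,\beta}$, in the form of \cite{Hormander1965} or \cite[Chap.~VIII]{DemaillyCADG}. It yields a solution $v$ of $\bar\partial v=g$ with
\begin{equation*}
 \int|v|^2e^{-\Phi}\le\int\langle (\partial\bar\partial\Phi)^{-1}g,g\rangle e^{-\Phi}=\frac2\beta\sum_j\int|\bar\partial_jf|^2e^{-\Phi}.
\end{equation*}
The minimal-norm solution of $\bar\partial v=g$ is the one orthogonal to $\ker\bar\partial$ in $L^2(e^{-\Phi})$, and $\ker\bar\partial$ is exactly $\mathcal A^2_{N,\beta}$ (weakly holomorphic $L^2$ functions are holomorphic by Weyl's lemma). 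Since $u$ is a solution orthogonal to this kernel, $u$ is the minimal solution, so $\|u\|_2\le\|v\|_2$. This gives \eqref{eq:dbar}.

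The one point to check is that Hörmander's theorem applies even though the weight is unbounded near $\mathcal C_N$ and $\Omega_N$ is not bounded. Demailly's version on weakly pseudoconvex Kähler manifolds requires only a smooth psh weight with Levi form bounded below by a positive Hermitian form, and the exhaustion in \Cref{lem:levi} supplies the completeness. Alternatively, one can argue directly: exhaust by the sublevel sets of $|Z|^2+|\Delta_N|^{-2}$, which are smoothly bounded pseudoconvex after perturbing to regular values, solve there with a uniform bound, and pass to a weak limit. The weak limit still solves $\bar\partial v=g$ and satisfies the same bound by lower semicontinuity.

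For real $f$, $|\bar\partial_jf|^2=\frac14|\nabla_{z_j}f|^2$, where $\nabla_{z_j}$ is the real gradient in $(x_j,y_j)$. Hence
\begin{equation*}
 \sum_j\|\bar\partial_jf\|_2^2=\tfrac14\cE_{N,\beta}(f),
\end{equation*}
and therefore $\|f-Q_{N,\beta}f\|_2^2\le\frac{2}{\beta}\cdot\frac14\cE_{N,\beta}(f)=\frac1{2\beta}\cE_{N,\beta}(f)$.
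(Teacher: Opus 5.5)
Your proof is correct and follows essentially the paper's route. Both arguments apply the weighted H\"ormander--Demailly $L^2$ estimate on the weakly pseudoconvex set $\Omega_N$ with Levi form $\frac\beta2 I_N$, identify $f-Q_{N,\beta}f$ as the minimal solution because it is orthogonal to the holomorphic kernel, and use $\sum_j|\bar\partial_jf|^2=\frac14|\nabla f|^2$ for real $f$. The only difference is cosmetic: you work with $(0,1)$-forms via H\"ormander's original theorem, whereas the paper twists by $dz_1\wedge\cdots\wedge dz_N$ to use Demailly's $(N,1)$-form statement, and the two are equivalent on $\C^N$ with the flat metric.
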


\begin{proof}
Let $\omega=dz_1\wedge\cdots\wedge dz_N$ and replace the scalar datum $\bar\partial f$ by the $(N,1)$-form $\omega\wedge\bar\partial f$ in the trivial line bundle with weight $\Phi_{N,\beta}$.  It is $\bar\partial$-closed and square-integrable; compact support avoids boundary issues.  Demailly's weighted $\bar\partial$ estimate \cite[Chap.~VIII, Thm.~6.5]{DemaillyCADG} applies to $\Omega_N$ equipped with the standard Euclidean metric and complex structure, which furnish the required Kähler structure; \Cref{lem:levi} gives weak pseudoconvexity.  The smallest curvature eigenvalue is $\beta/2$, so the inverse-curvature constant is $2/\beta$.  The minimal solution is orthogonal to the weighted holomorphic kernel and therefore equals $f-Q_{N,\beta}f$.
For real $f$, $\sum_j|\bar\partial_jf|^2=|\nabla f|^2/4$.
\end{proof}

\begin{lemma}[Removal of collision poles]\label{lem:pole-removal}
If $a_N\le2$, every permutation-invariant $h\in\mathcal A^2_{N,\beta}$ extends to an entire function on $\C^N$.
\end{lemma}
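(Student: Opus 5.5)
The plan is to reduce to Riemann's removable singularity theorem across the collision hyperplanes, using permutation invariance to kill the only possible obstruction: a simple pole. Fix a permutation-invariant $h\in\mathcal A^2_{N,\beta}$. The set $\mathcal C_N$ is an analytic subvariety of codimension one. Its singular part, where three or more particles collide or two distinct pairs collide, has complex codimension at least two. By the second Riemann extension theorem (Hartogs), a function holomorphic on the complement of a codimension-two analytic set extends across it. It therefore suffices to extend $h$ holomorphically across the regular points of each hyperplane $\{z_i=z_j\}$ that lie off all other hyperplanes.

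Near such a point, use the local coordinates $u=(z_i-z_j)/\sqrt2$ together with the remaining linear coordinates $w$. All other Vandermonde factors are bounded above and below on a small polydisc $\{|u|<r\}\times D_w$, and so is the Gaussian factor. Hence $\int|h|^2|u|^{a_N}\,\D u\,\D w<\infty$ there. For a.e.\ fixed $w$, the slice $u\mapsto h(u,w)$ is holomorphic on the punctured disc and satisfies $\int_{|u|<r}|h|^2|u|^{a_N}\,\D u<\infty$.

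Next, expand the slice in a Laurent series $\sum_n c_n(w)u^n$. By orthogonality of the monomials on circles, the integrability condition gives $\sum_n|c_n|^2\int_0^r s^{2n+a_N+1}\,\D s<\infty$. This forces $c_n=0$ whenever $2n+a_N+2\le0$, that is, whenever $n\le -1-a_N/2$. Since $a_N\le2$, all $n\le-2$ are excluded, and the only surviving singular term is $c_{-1}(w)u^{-1}$.

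The key step is to rule out this term using symmetry. Transposing $z_i\leftrightarrow z_j$ fixes $w$ and sends $u\mapsto-u$. Permutation invariance then gives $h(u,w)=h(-u,w)$, so every odd coefficient vanishes, in particular $c_{-1}=0$. Thus each slice extends holomorphically across $u=0$.

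To obtain joint holomorphy, represent the extension by Cauchy's integral over the circle $|u|=r/2$ in $u$. This integral is holomorphic in $(u,w)$ and agrees with $h$ off the hyperplane. Equivalently, $h$ is locally bounded near the regular point, via the mean value inequality applied to this representation, and Riemann's first extension theorem applies.

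Combining the extension across the regular hyperplane points with the codimension-two extension across the singular part shows that $h$ extends to an entire function on $\C^N$.

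The main obstacle I expect is justifying the slice argument rigorously: the estimate holds only for a.e.\ $w$, while joint holomorphy must be obtained uniformly. I would handle this by writing $c_{-1}(w)=\frac1{2\pi i}\oint h(u,w)\,\D u$, which is holomorphic in $w$ and vanishes identically by oddness. The function $uh$ is then holomorphic in $(u,w)$ and vanishes on $\{u=0\}$, so $h=(uh)/u$ extends holomorphically.
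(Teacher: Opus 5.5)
Your proposal is correct and follows the paper's proof: a Laurent expansion in the normal coordinate $u=(z_i-z_j)/\sqrt2$ together with weighted square-integrability leaves at most a simple pole when $a_N\le2$, the transposition $u\mapsto-u$ removes that odd term, and Hartogs (second Riemann) extension handles the codimension-two multiple-collision set. Your Cauchy-integral argument in $u$, with $w$ as a holomorphic parameter (coefficients vanishing for a.e.\ $w$, hence identically), supplies the joint-holomorphy detail that the paper leaves implicit.
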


\begin{proof}
Near a generic point of $\{z_i=z_j\}$, use the normal coordinate $y=(z_i-z_j)/\sqrt2$.  A Laurent term $y^{-m}$ is locally square-integrable against $|y|^{a_N}\D y$ exactly when $m<(a_N+2)/2$.  If $a_N\le2$, at most a simple pole can occur.  Permutation invariance sends $y$ to $-y$ and eliminates this odd pole.  After all codimension-one poles are removed, the multiple-collision set has complex codimension at least two, and Hartogs extension, applied locally as in \cite[Chap.~I, Thm.~3.28]{DemaillyCADG}, completes the proof.
\end{proof}

\begin{lemma}[Common-phase bilinear identity]\label{lem:phase}
If $F,G$ are entire and $FG\in L^1(\mathbb P_{N,\beta})$, then
\begin{equation}
 \int_{\C^N} FG\,\D \mathbb P_{N,\beta}=F(0)G(0).
\end{equation}
\end{lemma}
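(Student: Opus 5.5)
The plan is to exploit invariance of $\mathbb P_{N,\beta}$ under the common phase rotation $Z\mapsto e^{i\theta}Z$, together with a Taylor expansion in homogeneous components.

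Both the Gaussian factor $e^{-\beta|Z|^2/2}$ and $|\Delta_N(Z)|^{a_N}$ are invariant under this rotation: $|\Delta_N(e^{i\theta}Z)|=|\Delta_N(Z)|$ because $\Delta_N$ is homogeneous of degree $N(N-1)/2$ and the modulus kills the phase. Lebesgue measure is rotation invariant as well. Hence for every $\theta$,
\[
 \int_{\C^N} FG\,\D\mathbb P_{N,\beta}=\int_{\C^N} F(e^{i\theta}Z)G(e^{i\theta}Z)\,\D\mathbb P_{N,\beta}(Z).
\]
Averaging in $\theta\in[0,2\pi)$ and using Fubini (justified since $|FG|$ is rotation invariant in the sense that $\int|FG|(e^{i\theta}\cdot)\,\D\mathbb P=\int|FG|\,\D\mathbb P<\infty$) gives
\[
 \int FG\,\D\mathbb P_{N,\beta}=\int_{\C^N}\Big(\frac1{2\pi}\int_0^{2\pi}(FG)(e^{i\theta}Z)\,\D\theta\Big)\D\mathbb P_{N,\beta}(Z).
\]

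Next, $H:=FG$ is entire. For fixed $Z$, the function $\lambda\mapsto H(\lambda Z)$ is entire in $\lambda\in\C$, so the mean value property on the unit circle gives
\[
 \frac1{2\pi}\int_0^{2\pi}H(e^{i\theta}Z)\,\D\theta=H(0)=F(0)G(0).
\]
The inner average is therefore the constant $F(0)G(0)$, and integrating against the probability measure $\mathbb P_{N,\beta}$ yields the identity. Equivalently, one can expand $H=\sum_k H_k$ in homogeneous polynomials: $H_k(e^{i\theta}Z)=e^{ik\theta}H_k(Z)$ averages to zero for $k\ge1$. The mean-value formulation avoids any convergence issues in that expansion.

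The only delicate point is the interchange of the $\theta$-average with the $\mathbb P_{N,\beta}$ integral. Only $FG\in L^1$ is assumed, not integrability of $F$ and $G$ separately, and that hypothesis suffices. The map $(\theta,Z)\mapsto H(e^{i\theta}Z)$ is jointly continuous, hence measurable. By rotation invariance, its absolute value has $\D\theta\otimes\mathbb P_{N,\beta}$-integral equal to $2\pi\|H\|_{L^1(\mathbb P_{N,\beta})}<\infty$, so Tonelli and then Fubini apply. I do not expect any step to be a real obstacle. The key structural input is simply that the logarithmic Vandermonde is invariant under common phase, which holds by homogeneity.
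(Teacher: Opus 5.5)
Your proposal is correct and follows the paper's route: common-phase invariance of $\mathbb P_{N,\beta}$, then averaging over the circle so that only the degree-zero part of $FG$ survives. Your version applies the one-variable mean value property to $\lambda\mapsto (FG)(\lambda Z)$ and justifies the interchange by Fubini, using rotation invariance of $|FG|$. This makes the paper's ``homogeneous Taylor expansion plus truncation'' justification fully explicit without any series interchange.
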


\begin{proof}
The measure is invariant under $Z\mapsto e^{it}Z$.  Average the integral over $t\in[0,2\pi]$ and use the homogeneous Taylor expansions of $F$ and $G$; only total holomorphic degree zero survives.  Truncation justifies the averaging under the stated integrability.
\end{proof}

\begin{proposition}[Permutation-invariant Poincar\'e inequality]\label{prop:symmetric-pi}
If $a_N\le2$ and $f$ is real and permutation-invariant, then
\begin{equation}\label{eq:sym-pi}
 \Var_{\mathbb P_{N,\beta}}(f)\le\frac1\beta\cE_{N,\beta}(f).
\end{equation}
\end{proposition}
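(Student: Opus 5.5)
Decompose a real, permutation-invariant $f$ on the core as $f=(f-Q_{N,\beta}f)+Q_{N,\beta}f$, and bound each piece by the earlier results. Since $\mathbb P_{N,\beta}$ is exchangeable, $Q_{N,\beta}$ commutes with relabeling, so $h:=Q_{N,\beta}f$ is permutation-invariant. By \Cref{lem:pole-removal}, using $a_N\le2$, $h$ extends to an entire function on $\C^N$. Moreover $\bar h$ is antiholomorphic and $f$ is real, so
\[
 \langle f,\bar h\rangle=\overline{\langle f,h\rangle}=\|h\|_2^2 .
\]

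The key observation is that the variance contribution of $h$ collapses to its value at the origin. Apply \Cref{lem:phase} with $F=h$ and $G=1$, and also with $F=G=h$; both are allowed since $h\in L^2$ and $\mathbb P_{N,\beta}$ is a probability measure. This gives $\E h=h(0)$, and by the common-phase averaging the only degree-zero term in $|h|^2$ is $|h(0)|^2$. Hence the orthogonal decomposition of $h$ into homogeneous parts yields
\[
 \|h-h(0)\|_2^2=\|h\|_2^2-|h(0)|^2 .
\]
The real function $f$ can now be compared with the constant $\E f$. Because $f-Q_{N,\beta}f\perp\mathcal A^2_{N,\beta}\ni 1$, we get $\E f=\E h=h(0)$, so $h(0)$ is real.

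The variance then splits orthogonally:
\[
 \Var(f)=\|f-h\|_2^2+\|h-h(0)\|_2^2 .
\]
The first term is at most $\frac{1}{2\beta}\cE_{N,\beta}(f)$ by \Cref{prop:dbar}. For the second term use that $f$ is real: $\bar h$ is the projection of $f$ onto the antiholomorphic space. Since $\langle h-h(0),\overline{h-h(0)}\rangle=\int (h-h(0))^2\,\D\mathbb P_{N,\beta}=0$ by \Cref{lem:phase}, the functions $h-h(0)$ and $\overline{h-h(0)}$ are orthogonal. Therefore
\[
 \Var(f)=\Bigl\|f-h(0)-(h-h(0))-\overline{(h-h(0))}\Bigr\|_2^2+2\|h-h(0)\|_2^2 .
\]
Equivalently, $f-\E f$ decomposes into a holomorphic mean-zero part $P$, its conjugate $\bar P$, and a remainder $g$ orthogonal to both holomorphic and antiholomorphic spaces. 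By \Cref{prop:dbar}, $\|f-Q f\|^2=\|\bar P+g\|^2=\|P\|^2+\|g\|^2\le\frac{1}{2\beta}\cE$. Since $\Var(f)=2\|P\|^2+\|g\|^2\le 2\|f-Qf\|^2\le\frac1\beta\cE_{N,\beta}(f)$, this is exactly \eqref{eq:sym-pi}. The identities $\langle P,\bar P\rangle=0$ and $\langle g,\bar P\rangle=0$ follow from \Cref{lem:phase} and from the fact that conjugation maps the orthogonal complement of the holomorphic space to the complement of the antiholomorphic one.

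The last step is closure: the inequality passes from the core to permutation-invariant $f\in\mathsf D_{N,\beta}$ by form approximation, symmetrizing approximants with $\sfS$. The main obstacle is verifying the orthogonality $\langle g,\bar P\rangle=0$. The natural choice is to define $g$ as the projection of $f-\E f$ onto the complement of both the holomorphic and antiholomorphic spaces; for this, $\mathcal A^2_{N,\beta}\ominus\C$ and its conjugate must be orthogonal, which is precisely \Cref{lem:phase} applied to $F=P_1$, $G=P_2$. The integrability hypothesis there is justified by Cauchy–Schwarz.
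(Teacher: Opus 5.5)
Your proof is correct and uses the same mechanism as the paper's. In both, $h=Q_{N,\beta}f$ is symmetric and hence entire by \Cref{lem:pole-removal}, \Cref{lem:phase} kills the pairing between the mean-zero holomorphic part and its conjugate, and \Cref{prop:dbar} bounds $\|f-Q_{N,\beta}f\|_2^2$, giving $\Var(f)\le 2\|f-Q_{N,\beta}f\|_2^2$.

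Your route is slightly more economical. The paper proves the stronger fact $Q_{N,\beta}\bar h=0$, which needs a second application of pole removal to the symmetric function $k=Q_{N,\beta}\bar h$, and then reads off $h=Q_{N,\beta}\bar u$. You only need $\int P^2\,\D\mathbb P_{N,\beta}=0$ (\Cref{lem:phase} with $F=G=P$) and the two expressions $g=(f-h)-\bar P=(f-\bar h)-P$. Together with $f-h\perp\mathcal A^2_{N,\beta}$ and $f-\bar h\perp\overline{\mathcal A^2_{N,\beta}}$, these give $\langle P,\bar P\rangle=\langle g,P\rangle=\langle g,\bar P\rangle=0$. Those are exactly the orthogonalities your computation $\Var(f)=2\|P\|_2^2+\|g\|_2^2\le 2\|f-Q_{N,\beta}f\|_2^2$ uses, and your sentence about conjugating orthogonal complements already justifies them.

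You should drop your final paragraph, because its justification is false. \Cref{lem:phase} requires entire $F,G$, but $\mathcal A^2_{N,\beta}\ominus\C$ contains non-entire elements. For instance, $z_1-z_2$ and $(z_1-z_2)^{-1}$ are both mean-zero elements of $\mathcal A^2_{N,\beta}$: they are antisymmetric under $\tau_{12}$, and the second lies in $L^2$ because $\int_{|y|<1}|y|^{a_N-2}\,\D y<\infty$. Yet $\int(z_1-z_2)(z_1-z_2)^{-1}\,\D\mathbb P_{N,\beta}=1\neq0$. So $\mathcal A^2_{N,\beta}\ominus\C$ is not orthogonal to its conjugate. That orthogonality holds only on the permutation-invariant subspace, where \Cref{lem:pole-removal} applies.

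Likewise, your assertion that $g$ is orthogonal to the whole holomorphic and antiholomorphic spaces is equivalent to $\bar P\perp\mathcal A^2_{N,\beta}$. That is precisely the paper's $Q_{N,\beta}\bar h=0$ step after centering. It is true here, but your argument neither proves nor needs it.

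A minor point: $|h(0)|^2$ is not the only phase-invariant term of $|h|^2$, since every $|h_d|^2$ is. The identity $\|h-h(0)\|_2^2=\|h\|_2^2-|h(0)|^2$ follows simply from $\E h=h(0)$.
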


\begin{proof}
Subtract the mean and set $h=Q_{N,\beta}f$, $u=f-h$.  Then, $h$ is symmetric and entire, while $u\perp\mathcal A^2_{N,\beta}$.  Since constants lie in $\mathcal A^2_{N,\beta}$, $h$ has mean zero; \Cref{lem:phase} with $G=1$ gives $h(0)=0$.

Set $k=Q_{N,\beta}\bar h$.  The projection commutes with permutation averaging, so $k$ is permutation-invariant.  \Cref{lem:pole-removal} makes $k$ entire.  Projection orthogonality gives
\begin{equation}
 \|k\|_2^2=\int_{\C^N} \bar h\,\bar k\,\D \mathbb P_{N,\beta}
 =\overline{\int_{\C^N} hk\,\D \mathbb P_{N,\beta}}
 =\overline{h(0)k(0)}=0.
\end{equation}
Thus, $Q_{N,\beta}\bar h=0$.
Because $f$ is real,
\begin{equation}
 h=Qf=Q\bar f=Q\bar h+Q\bar u=Q\bar u,
\end{equation}
so $\|h\|_2\le\|u\|_2$.  Orthogonality and \Cref{prop:dbar} yield
\begin{equation}
 \Var(f)=\|h\|_2^2+\|u\|_2^2
 \le2\|u\|_2^2\le\frac1\beta\cE_{N,\beta}(f).
\end{equation}
\end{proof}

\begin{remark}[The Ginibre endpoint]\label{rem:ginibre-endpoint}

The endpoint $a_N=2$ in \Cref{lem:pole-removal,prop:symmetric-pi} has a direct Ginibre interpretation.  In the unit-droplet normalization, the complex Ginibre measure is
\begin{equation}
 \D\mathbb P_{N,2N}(Z)
 \propto e^{-N\sum_{i=1}^N|z_i|^2}|\Delta_N(Z)|^2\,\D Z,
\end{equation}
and the endpoint estimate reads
\begin{equation}
 \Var_{\mathbb P_{N,2N}}(f)
 \le \frac1{2N}\int_{\C^N}|\nabla f|^2\,\D\mathbb P_{N,2N}
\end{equation}
for every real permutation-invariant observable.  The real and imaginary parts of the center-of-mass observable show that the constant is sharp.  This is the sharp symmetric, equivalently unlabelled, Ginibre Poincar\'e inequality very recently obtained independently by Suzuki and Chafa\"i \cite{Suzuki2026Ginibre,Chafai2026Ginibre}. Our argument presented above does not use determinantal structure.  Suzuki further proved that the unlabelled spectral gap is strictly larger in the infinite-particle system than in every finite-particle system; the center-of-mass mode attaining the finite-particle gap has no $L^2$ counterpart in the infinite-particle Dirichlet-form domain \cite{Suzuki2026Ginibre}.

\end{remark}

\leavevmode The complex-analytic argument now covers every $N$ with $\beta/N\le2$.  For fixed $\beta$, only finitely many particle numbers satisfy $\beta/N>2$; the next lemma supplies a positive gap for these remaining cases.

\begin{lemma}[Fixed-particle compact resolvent]\label{lem:fixed-N}
For every fixed $(N,\beta)$, the nonnegative self-adjoint operator associated with the closed Dirichlet form $(\cE_{N,\beta},\mathsf D_{N,\beta})$ on $L^2(\mathbb P_{N,\beta})$ has compact resolvent, and its kernel consists only of constants.  Hence, $\mathbb P_{N,\beta}$ has a positive fixed-$(N,\beta)$ Poincar\'e gap.

Moreover, for every fixed $N$ and every compact interval $I\Subset(0,\infty)$,
\begin{equation}\label{eq:fixed-N-compact}
 \sup_{\beta\in I}C_P(\mathbb P_{N,\beta})<\infty.
\end{equation}

\end{lemma}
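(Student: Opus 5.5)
The plan is to derive compact resolvent from a compact embedding of the form domain $\mathsf D_{N,\beta}$ into $L^2(\mathbb P_{N,\beta})$. We will establish this embedding by combining local compactness with a uniform tail bound.

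\emph{Tail bound.} For fixed $(N,\beta)$, we use the ground-state transform as in \Cref{cor:U-bound}, now on $\C^N$. Write $U=\Phi_{N,\beta}$ and $\psi=\mathsf Z_{N,\beta}^{-1/2}fe^{-U/2}$ for $f\in C_c^\infty(\Omega_N)$. Pluriharmonicity (\Cref{lem:levi}) gives $\Delta U=2N\beta$ on $\Omega_N$, so
\[
\cE_{N,\beta}(f)=\int\bigl(|\nabla\psi|^2+\tfrac14|\nabla U|^2|\psi|^2-N\beta|\psi|^2\bigr)\D Z .
\]
We then need a quadratic-moment estimate $\int|Z|^2f^2\,\D\mathbb P_{N,\beta}\le C_{N,\beta}(\cE_{N,\beta}(f)+\|f\|_2^2)$. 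The simplest route avoids the weak-field argument. We expand $|\nabla U|^2=\beta^2|Z|^2-2\beta\,Z\cdot\nabla(a_N\log|\Delta_N|)+a_N^2|\nabla\log|\Delta_N||^2$. Because $\log|\Delta_N|$ is homogeneous up to a constant, $Z\cdot\nabla\log|\Delta_N|=N(N-1)/2$ exactly. Hence $|\nabla U|^2\ge\beta^2|Z|^2-\beta a_NN(N-1)$, and the moment bound follows directly. An alternative is Euler's identity $\int Z\cdot\nabla(f^2)\,\D\mathbb P=\int f^2(\beta|Z|^2-2N-\beta(N-1)/2)\,\D\mathbb P$ followed by Cauchy--Schwarz, which gives the same bound. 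This bound says that the $L^2$ mass of form-bounded sets outside $B_R$ is uniformly $O(R^{-2})$.

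\emph{Local compactness.} On $B_R$ we need precompactness in $L^2(\mathbb P)$ of sets that are bounded in form norm. The density vanishes only on $\mathcal C_N$, like $|z_i-z_j|^{a_N}$ near generic collisions. Near higher collisions it is a product of such factors, which is bounded above. The simplest argument runs as follows. On $B_R\setminus\{\operatorname{dist}(Z,\mathcal C_N)<\varepsilon\}$ the density is bounded above and below, so Rellich applies there. The mass in the $\varepsilon$-neighborhood must then be shown to be small uniformly. For this we use $\mathbb P(\operatorname{dist}<\varepsilon)\to0$ together with higher integrability of $f$. The latter comes from a local Sobolev inequality against the weight $e^{-\Phi}$ on bounded balls. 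We would obtain it by treating the density as bounded above by a constant times Lebesgue measure and bounded below on slabs. A cleaner alternative is to note that $|\Delta_N|^{a_N}$ is a doubling weight supporting a Poincar\'e inequality: it is $|P|^{a}$ for a polynomial $P$, and such weights are $A_\infty$ and admissible for small $a$. Björn's Sobolev gain, as in \Cref{lem:strong-Ainf}, then gives $L^{2\kappa}$ integrability on balls. Hölder's inequality on the small-mass neighborhood then finishes the local step. \textbf{This local step is the main obstacle}: for large $a_N>2$ we must verify that $|\Delta_N|^{a_N}$ is a $2$-admissible weight on $\C^N$. If that fails, we fall back on slicing. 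Near a generic point of $\{z_i=z_j\}$ we apply \Cref{thm:weighted}-type one-variable weighted Sobolev bounds in the normal coordinate and ordinary Sobolev bounds in the tangential coordinates. Multiple-collision strata have zero capacity (\Cref{prop:capacity}), and we handle them by a covering argument.

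\emph{Conclusion.} Compact embedding gives a discrete spectrum for the form operator. If $\cE_{N,\beta}(f)=0$, then $\nabla f=0$ a.e. on the connected open set $\Omega_N$; it is connected because $\mathcal C_N$ has real codimension $2$. Hence $f$ is constant, and the gap is the first nonzero eigenvalue. For uniformity over $\beta\in I$ we argue by contradiction. Suppose $\beta_k\to\beta_\infty\in I$, with $f_k$ of mean zero, variance $1$, and $\cE_{N,\beta_k}(f_k)\to0$. The densities for $\beta\in I$ are mutually comparable on compact subsets of $\Omega_N$, and the tail and local bounds above are uniform in $\beta\in I$. These facts give compactness of $(f_k)$ in $L^2_{\rm loc}$ with no mass escape. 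A limit $f$ then has $\nabla f=0$, mean zero, and variance $1$, which is a contradiction.
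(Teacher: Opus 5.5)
\textbf{Verdict: there is a genuine gap, in the local compactness step near $\mathcal C_N$, which you flag but do not resolve.} Your tail bound (Euler's identity $Z\cdot\nabla\log|\Delta_N|=N(N-1)/2$ inside the ground-state transform) and your kernel argument are correct and match the paper.

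The local step tries to prove precompactness in $L^2(B_R,\mathbb P_{N,\beta})$ through weighted Sobolev theory for $|\Delta_N|^{a_N}$ on $\C^N$, and that theory is not available. The quasiconformal input behind \Cref{lem:strong-Ainf} (Bonk--Lang) is specific to one complex variable. The standard $A_2$ route to $2$-admissibility fails once $a_N\ge2$, since $|z_i-z_j|^{-a_N}$ is then not locally integrable in the normal direction. Admissibility of $|\Delta_N|^{a_N}$ on $\C^N$ with a Sobolev gain is not something you can cite.

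The fallback does not close the step either. Zero capacity of the multiple-collision strata (\Cref{prop:capacity}) says those sets can be cut out with small energy. It gives no uniform bound on $\int_{\{\operatorname{dist}(Z,\mathcal C_N)<\varepsilon\}}f^2\,\D\mathbb P_{N,\beta}$ over the form unit ball, which is exactly what the covering argument needs. Your $\beta$-uniform contradiction argument invokes ``the local bounds above,'' so it inherits the same gap.

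\textbf{The missing idea: the ground-state transform you already wrote down removes the weight from the local problem.} Your identity, with your Euler lower bound (or simply $\frac14|\nabla U|^2\ge0$ for the gradient part), gives
\[
 \int_{\C^N}|\nabla\psi|^2\,\D Z+\frac{\beta^2}{4}\int_{\C^N}|Z|^2|\psi|^2\,\D Z
 \le\cE_{N,\beta}(f)+C_{N,\beta}\|f\|_{L^2(\mathbb P_{N,\beta})}^2 .
\]
Moreover, $f\mapsto\psi$ is an isometry from $L^2(\mathbb P_{N,\beta})$ to $L^2(\D Z)$.

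Consequences:
\begin{itemize}
 \item Form-bounded sets become bounded in the ordinary, unweighted $H^1(\R^{2N})$ and have bounded second moment. Rellich on balls plus tightness gives the compact embedding.
 \item The degeneracy at collisions never has to be confronted. It sits in the nonnegative term $\frac{a_N^2}{4}|\nabla\log|\Delta_N||^2|\psi|^2$, which is simply discarded.
 \item The bound passes from $C_c^\infty(\Omega_N)$ to $\mathsf D_{N,\beta}$ by weak lower semicontinuity.
\end{itemize}
This is the paper's proof.

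The same device repairs your uniformity argument. Set $\phi_\beta:=\mathsf Z_{N,\beta}^{-1/2}e^{-\beta\mathcal H_N/2}$ and $\psi_k=f_k\phi_{\beta_k}$. These are bounded in $H^1$ with bounded moment, uniformly for $\beta_k\in I$. So $\psi_k\to\psi$ strongly in $L^2(\D Z)$, and $\phi_{\beta_k}\to\phi_{\beta_\infty}$ in $L^2$. On each $K\Subset\Omega_N$ the limit satisfies $\nabla\psi+\frac{\beta_\infty}{2}\nabla\mathcal H_N\,\psi=0$. Hence $\psi$ is a constant multiple of $\phi_{\beta_\infty}$ and orthogonal to it, so the constant is zero, contradicting $\|\psi\|_2=1$.
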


\begin{proof}
On $\Omega_N$, let $U:=\beta|Z|^2/2-a_N\log|\Delta_N|$ denote the negative logarithm of the unnormalized Gibbs density.  For $f\in C_c^\infty(\Omega_N)$, denote its ground-state transform by $\psi:=\mathsf{Z}_{N,\beta}^{-1/2}fe^{-U/2}$.  Writing $V_{N,\beta}$ for the resulting Schr\"odinger potential, the ground-state transform gives
\begin{equation}
 \cE_{N,\beta}(f)
 =\int_{\Omega_N}\left(|\nabla\psi|^2+V_{N,\beta}|\psi|^2\right)\D Z,
 \qquad V_{N,\beta}:=\frac14|\nabla U|^2-\frac12\Delta U.
\end{equation}
Away from collisions, Euler's identity applied to the homogeneity of the Vandermonde polynomial gives $Z\cdot\nabla\log|\Delta_N|=N(N-1)/2$.  Consequently,
\begin{equation}
 V_{N,\beta}(Z)\ge\frac{\beta^2}{4}|Z|^2-C_{N,\beta}.
\end{equation}

For $i<j$, set $y_{ij}:=(z_i-z_j)/\sqrt2$, the unit normal coordinate to the collision hyperplane $\{z_i=z_j\}$.  With $\Delta$ denoting the full real Laplacian on $\C^N$, the identity $\Delta_y\log|y|=2\pi\delta_0$ gives
\begin{equation}
 \Delta\log|z_i-z_j|=2\pi\delta_0(y_{ij}),
 \qquad
 \left(-\frac12\Delta U\right)_{\mathrm{sing}}
 =\pi a_N\sum_{1\le i<j\le N}\delta_0(y_{ij})\ge0.
\end{equation}
Here, $\delta_0(y_{ij})$ denotes Dirac mass in the normal coordinate, tensored with Lebesgue measure in the tangential directions.

After adding $C_{N,\beta}+1$ to the form, the preceding bound gives
\begin{equation}
 \int_{\Omega_N}\left(|\nabla\psi|^2+V_{N,\beta}|\psi|^2\right)\D Z
 +(C_{N,\beta}+1)\|\psi\|_2^2
 \ge c_{N,\beta}\int_{\Omega_N}
 \left(|\nabla\psi|^2+(1+|Z|^2)|\psi|^2\right)\D Z.
\end{equation}
The unit ball of the form domain is therefore tight in $L^2$ by the $|Z|^2$ term and relatively compact on each bounded set by Rellich's theorem \cite[Theorem~6.3]{AdamsFournier2003}; a diagonal argument gives compact embedding into $L^2$, hence compact resolvent.  \Cref{prop:capacity} shows that the closure generated by $C_c^\infty(\Omega_N)$ agrees with the closure generated by compactly supported smooth functions on $\C^N$; this is the closed form used here.  If the energy is zero, then $f$ has weak gradient zero on $\Omega_N$.  Since the collision-free configuration space $\Omega_N$ is connected, $f$ is constant almost everywhere, so zero is simple.

Fix $N$ and write $I=[\beta_-,\beta_+]$.  Since $U=\beta\mathcal H_N$, the constants in the coercive estimate above may be chosen uniformly for $\beta\in I$.  Suppose that \eqref{eq:fixed-N-compact} fails.  After passing to a subsequence, there are $\beta_k\to\beta_\infty\in I$ and real $f_k\in\mathsf D_{N,\beta_k}$ such that
\begin{equation}
 \int_{\C^N} f_k\,\D\mathbb P_{N,\beta_k}=0,
 \qquad \int_{\C^N} f_k^2\,\D\mathbb P_{N,\beta_k}=1,
 \qquad \cE_{N,\beta_k}(f_k)\longrightarrow0.
\end{equation}
Let
\begin{equation}
 \phi_\beta:=\mathsf Z_{N,\beta}^{-1/2}e^{-\beta\mathcal H_N/2},
 \qquad \psi_k:=f_k\phi_{\beta_k}.
\end{equation}
The uniform coercive estimate makes $(\psi_k)$ bounded in $H^1(\C^N)$ with uniformly bounded quadratic moment.  Rellich compactness and tightness therefore give, along a further subsequence, $\psi_k\to\psi$ strongly in $L^2(\C^N)$ and weakly in $H^1(\C^N)$.  The Hamiltonian $\mathcal H_N$ is bounded below and proper, so dominated convergence gives $\phi_{\beta_k}\to\phi_{\beta_\infty}$ in $L^2$.  Hence,
\begin{equation}
 \|\psi\|_2=1,
 \qquad \int_{\C^N}\psi\phi_{\beta_\infty}\,\D Z=0.
\end{equation}
On every compact $K\Subset\Omega_N$, the ground-state identity gives
\begin{equation}
 \left\|\nabla\psi_k+\frac{\beta_k}{2}\nabla\mathcal H_N\,\psi_k\right\|_{L^2(K)}^2
 \le \cE_{N,\beta_k}(f_k)\longrightarrow0.
\end{equation}
Passing to the weak limit shows that $\nabla\psi+(\beta_\infty/2)\nabla\mathcal H_N\,\psi=0$ on $\Omega_N$.  Thus, $\psi/\phi_{\beta_\infty}$ has weak gradient zero there and is constant because $\Omega_N$ is connected.  The displayed orthogonality forces that constant to vanish, contradicting $\|\psi\|_2=1$.  This proves \eqref{eq:fixed-N-compact}.

\end{proof}

Writing $C_P(\mathsf P)$ for the optimal Poincar\'e constant of a probability measure $\mathsf P$, define the uniform permutation-invariant bound $C_{\rm sym}(\beta)$ by
\begin{equation}\label{eq:C-sym}
 C_{\rm sym}(\beta):=
 \max\left\{\frac1\beta,
 \max_{2\le N\le\beta/2}C_P(\mathbb P_{N,\beta})\right\},
\end{equation}
with the finite inner maximum omitted when empty.  \Cref{prop:symmetric-pi} and \Cref{lem:fixed-N} give the permutation-invariant Poincar\'e bound with this constant for every $N$.

If $I=[\beta_-,\beta_+]\Subset(0,\infty)$, then \eqref{eq:fixed-N-compact} and the finiteness of the exceptional family give
\begin{equation}\label{eq:C-sym-compact}
 \sup_{\beta\in I}C_{\rm sym}(\beta)
 \le\max\left\{\frac1{\beta_-},
 \max_{2\le N\le\beta_+/2}\sup_{\beta\in I}C_P(\mathbb P_{N,\beta})\right\}<\infty,
\end{equation}
with the inner maximum again omitted when empty.

\subsection{Relative-coordinate conditionals and label-dependent fluctuations}

For a transposition $\tau_{ij}$, the change to center and relative coordinates turns $f-f\circ\tau_{ij}$ into the odd part of a function of one complex variable.  The total exponent of the conditional law in \eqref{eq:relative-law} is below $2\beta$, so \Cref{thm:weighted} applies uniformly in the frozen configuration.

Fix $i<j$ and use
\begin{equation}
 c=\frac{z_i+z_j}{\sqrt2},
 \qquad u=u_{ij}:=\frac{z_i-z_j}{\sqrt2}.
\end{equation}
Write $Z_{-ij}:=(z_k)_{k\notin\{i,j\}}$ for the outside coordinates.  Condition on $(c,Z_{-ij})$, and put $A_k=c/\sqrt2-z_k$.

\begin{lemma}[Relative-coordinate conditional polynomial]\label{lem:relative-poly}
Let $\nu_{c,Z_{-ij}}$ denote the conditional law of $u$ given $(c,Z_{-ij})$, which is specified by
\begin{equation}\label{eq:relative-law}
 \D\nu_{c,Z_{-ij}}(u)
 \propto e^{-\beta|u|^2/2}|p(u)|^{\beta/N}\,\D u,
 \qquad
 p(u):=u\prod_{k\notin\{i,j\}}\left(A_k^2-\frac{u^2}{2}\right).
\end{equation}
The polynomial $p$ is odd, has degree $2N-3$, and total exponent $\beta(2N-3)/N<2\beta$.
\end{lemma}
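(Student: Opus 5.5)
The plan is a direct change of variables followed by an algebraic factorization of the Vandermonde. First, the map $(z_i,z_j)\mapsto(c,u)$ with $c=(z_i+z_j)/\sqrt2$ and $u=(z_i-z_j)/\sqrt2$ is unitary on $\C^2$, so Lebesgue measure is preserved. We have $|z_i|^2+|z_j|^2=|c|^2+|u|^2$, and the inverse formulas are $z_i=(c+u)/\sqrt2$ and $z_j=(c-u)/\sqrt2$. Writing the density \eqref{eq:law} in the coordinates $(c,u,Z_{-ij})$ and disintegrating with respect to $(c,Z_{-ij})$, the conditional density of $u$ is proportional to the full density with $(c,Z_{-ij})$ frozen. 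We therefore discard every factor that does not depend on $u$: the Gaussian factors $e^{-\beta|c|^2/2}$ and $e^{-\beta|z_k|^2/2}$ for $k\notin\{i,j\}$, and the pair factors $|z_k-z_l|^{\beta/N}$ with $k,l\notin\{i,j\}$. What remains is $e^{-\beta|u|^2/2}$ times the $u$-dependent pair factors.

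Next we factor those pair factors. The $(i,j)$ factor is $|z_i-z_j|=\sqrt2|u|$, which is $|u|$ up to a constant. For each $k\notin\{i,j\}$, the product of the two factors involving $k$ is
\[
(z_i-z_k)(z_j-z_k)=\Bigl(A_k+\tfrac{u}{\sqrt2}\Bigr)\Bigl(A_k-\tfrac{u}{\sqrt2}\Bigr)=A_k^2-\tfrac{u^2}{2},
\]
where $A_k=c/\sqrt2-z_k$ is fixed under the conditioning. Raising the modulus of everything to the power $\beta/N$ and absorbing constants into the normalization gives \eqref{eq:relative-law}. The remaining claims are then immediate. The polynomial $p$ is odd because $u$ is odd and each factor $A_k^2-u^2/2$ is even. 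Its degree is $1+2(N-2)=2N-3$, since each quadratic factor has leading coefficient $-1/2\neq0$.

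To apply \Cref{thm:weighted}, we factor $p$ over $\C$ into linear factors. Up to a nonzero constant, $p(u)=u\prod_k(u-\sqrt2A_k)(u+\sqrt2A_k)$. Hence $|p|^{\beta/N}$ is, up to a constant, $\prod_{\ell}|u-y_\ell|^{\beta/N}$ over the $2N-3$ roots $y_\ell$ listed with multiplicity. After coincident roots are combined, the total exponent is $\beta(2N-3)/N=2\beta-3\beta/N<2\beta$, so the measure lies in the class of \Cref{thm:weighted} with $\gamma=\beta$ and $Q=2\beta$.

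The only point needing care is rigor in the disintegration. Finiteness of the conditional normalization for every frozen $(c,Z_{-ij})$ follows from the Gaussian factor dominating the polynomial growth. Because the collision set is Lebesgue-null, the density formula holds almost everywhere. I expect no genuine obstacle here; the step most worth checking is the bookkeeping of the factor $A_k^2-u^2/2$, including the sign and normalization of $A_k$.
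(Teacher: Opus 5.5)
Your proposal is correct and follows essentially the same route as the paper: the unitary pair change of variables, absorption of all $u$-independent factors into the normalization, and the identity $(z_i-z_k)(z_j-z_k)=A_k^2-u^2/2$, which give oddness, degree $2N-3$, and total exponent $\beta(2N-3)/N<2\beta$. Your extra step of factoring $p$ over $\C$ into $2N-3$ linear factors spells out why the conditional law lies in the class of \Cref{thm:weighted}, which the paper leaves implicit.
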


\begin{proof}
The pair change of variables is unitary.  Thus,
\begin{equation}
 |z_i|^2+|z_j|^2=|c|^2+|u|^2,
 \qquad \D z_i\,\D z_j=\D c\,\D u.
\end{equation}
After conditioning on $(c,Z_{-ij})$, all factors independent of $u$ are absorbed into the normalizing constant.  The internal pair contributes a constant times $|u|^{\beta/N}$, and for each $k\notin\{i,j\}$,
\begin{equation}
 |z_i-z_k||z_j-z_k|
 =\left|A_k^2-\frac{u^2}{2}\right|.
\end{equation}
This gives \eqref{eq:relative-law}.  Since $p$ is the product of the monomial $u$ and $N-2$ even quadratic factors, it follows that $p$ is odd of degree $2N-3$, and the weighted polynomial factor has total exponent $\beta(2N-3)/N<2\beta$.
\end{proof}

Define the relative-coordinate Poincar\'e constant by
\begin{equation}
 C_{\rm rel}(\beta):=C_{\rm wt}(\beta,2\beta).
\end{equation}

\leavevmode Writing $\nabla_i:=\nabla_{z_i}$, define the associated relative-coordinate gradient by $\nabla_{u_{ij}}:=(\nabla_i-\nabla_j)/\sqrt2$.

\begin{proposition}[Transposition estimate]\label{prop:swap}
For every pair $i<j$ and every real core function $f$,
\begin{equation}
 \|f-f\circ\tau_{ij}\|_2^2
 \le4C_{\rm rel}(\beta)
 \int_{\C^N}|\nabla_{u_{ij}}f|^2\,\D \mathbb P_{N,\beta}.
\end{equation}
\end{proposition}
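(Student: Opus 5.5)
The plan is to disintegrate $\mathbb P_{N,\beta}$ along the pair change of variables $(z_i,z_j)\mapsto(c,u)$ and reduce everything to a one-variable statement about odd parts under the conditional law $\nu_{c,Z_{-ij}}$ of \Cref{lem:relative-poly}.

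\emph{Step 1: reduction to the fibre.} For fixed $(c,Z_{-ij})$, write $g(u):=f$ evaluated at $z_i=(c+u)/\sqrt2$, $z_j=(c-u)/\sqrt2$, with the other coordinates equal to $Z_{-ij}$. Then $f\circ\tau_{ij}$ corresponds to $g(-u)$. Since the pair change of variables is unitary, the disintegration gives
\begin{equation*}
 \|f-f\circ\tau_{ij}\|_2^2=\E\left[\int_{\C}|g(u)-g(-u)|^2\,\D\nu_{c,Z_{-ij}}(u)\right].
\end{equation*}
In addition, $\nabla_u g=\nabla_{u_{ij}}f$ by the chain rule, since $\partial z_i/\partial u=1/\sqrt2$ and $\partial z_j/\partial u=-1/\sqrt2$. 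It therefore suffices to prove, uniformly in the frozen data,
\begin{equation*}
 \int_{\C}|g(u)-g(-u)|^2\,\D\nu\le 4C_{\rm wt}(\beta,2\beta)\int_{\C}|\nabla g|^2\,\D\nu .
\end{equation*}

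\emph{Step 2: odd part and the weighted Poincar\'e inequality.} By \Cref{lem:relative-poly}, $|p|$ is even, so $\nu=\nu_{c,Z_{-ij}}$ is invariant under $u\mapsto-u$. Set $h(u):=g(u)-g(-u)$. This function is odd, so $\int h\,\D\nu=0$ and hence $\int h^2\,\D\nu=\Var_\nu(h)$.

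The measure $\nu$ belongs to the class of \Cref{thm:weighted}: the Gaussian parameter is $\gamma=\beta$, and after factoring $p$ over its roots, with coincident roots combined, the total exponent is $\beta(2N-3)/N<2\beta$. Hence
\begin{equation*}
 \Var_\nu(h)\le C_{\rm wt}(\beta,2\beta)\int_{\C}|\nabla h|^2\,\D\nu .
\end{equation*}
Next, $|\nabla h|^2\le 2|\nabla g(u)|^2+2|\nabla g(-u)|^2$, and symmetry of $\nu$ gives $\int|\nabla h|^2\,\D\nu\le4\int|\nabla g|^2\,\D\nu$. This yields the constant $4C_{\rm rel}(\beta)$. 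Integrating over $(c,Z_{-ij})$ finishes the argument.

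\emph{Main obstacle: domain membership of $g$.} We must check that $g$, and hence $h$, lies in the closed form domain of $\nu$ generated by $C_c^\infty$ functions avoiding the zeros of $p$. For a core function $f\in C_c^\infty(\Omega_N)$, the restriction $g$ is smooth and compactly supported in $u$. Its support avoids every zero of $p$, because such a zero corresponds to a collision: either $u=0$ (so $z_i=z_j$) or $z_i=z_k$ or $z_j=z_k$ for some $k$. So $g$ is itself a core function, the restriction commutes with $\tau_{ij}$, and no closure argument is needed.

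The only remaining care is measurability of the disintegration. The frozen data $(c,Z_{-ij})$ for which the conditional density is ill-defined form a null set, and the constant $C_{\rm wt}(\beta,2\beta)$ does not depend on the frozen data. This uniformity is the essential input and is supplied exactly by \Cref{thm:weighted}.
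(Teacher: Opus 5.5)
Your proof is correct and follows the paper's argument. You condition on $(c,Z_{-ij})$ and apply the weighted Poincar\'e inequality of \Cref{thm:weighted}, with $\gamma=\beta$ and $Q=2\beta$, to the odd (hence mean-zero) part of the fibre function under the $u\mapsto-u$-invariant conditional law, then recover the factor four from that symmetry. The paper takes the odd part as $\tfrac12(g(u)-g(-u))$ in your notation and applies Jensen to its gradient, which is the same computation. Your explicit check that the fibre restriction of a core function is itself a core function for $\nu_{c,Z_{-ij}}$ is a detail the paper leaves implicit, and you handle it correctly.
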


\begin{proof}
Under the even conditional law in \eqref{eq:relative-law}, let $g(u)=(f(u)-f(-u))/2$.  It has zero conditional mean.  \Cref{thm:weighted}, with total exponent below $2\beta$, gives the conditional Poincar\'e bound.  Evenness and Jensen yield the displayed factor four; integrate over the conditioned variables.
\end{proof}

\leavevmode The next lemma supplies the two estimates needed to pass from a single transposition to all label-dependent fluctuations without losing uniformity in $N$: the random-transposition spectral-gap inequality \eqref{eq:RT} and the complete-graph gradient identity \eqref{eq:relative-gradient}.  Their $N^{-1}$ and $N$ factors cancel in \Cref{cor:nonsymmetric}.

\begin{lemma}[Random transpositions and complete-graph gradient identity]\label{lem:complete-graph}
For permutation averaging $\sfS$,
\begin{align}
 \|f-\sfS f\|_2^2&\le\frac1{2N}\sum_{i<j}\|f-f\circ\tau_{ij}\|_2^2,\label{eq:RT}\\
 \sum_{i<j}|\nabla_{u_{ij}}f|^2
 &=\frac12\sum_{i<j}|\nabla_if-\nabla_jf|^2
 \le\frac N2|\nabla f|^2.\label{eq:relative-gradient}
\end{align}
\end{lemma}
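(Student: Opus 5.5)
For \eqref{eq:RT}, I would view the permutation group $S_N$ as acting unitarily on $L^2(\mathbb P_{N,\beta})$ by $(T_\sigma f)(Z)=f(z_{\sigma(1)},\ldots,z_{\sigma(N)})$; these operators are unitary by exchangeability. The key object is the random-transposition averaging operator
\[
 K:=\binom N2^{-1}\sum_{i<j}T_{\tau_{ij}}.
\]
It is self-adjoint because $\tau_{ij}$ is an involution, and it commutes with all $T_\sigma$ because the set of transpositions is a conjugacy class.

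Decomposing $L^2$ into isotypic components of this unitary representation, $K$ acts on the $\lambda$-isotypic part by the scalar $\chi_\lambda(\tau)/d_\lambda$. This is the Diaconis--Shahshahani character ratio. By \cite[Cor.~4 and Rem.~1]{DiaconisShahshahani1981}, every nontrivial irreducible satisfies $\chi_\lambda(\tau)/d_\lambda\le 1-\frac{2}{N-1}$, with equality at the standard representation. The trivial isotypic part is exactly the range of $\sfS$.

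Hence on $(I-\sfS)L^2$ we have
\[
 \langle (I-K)f,f\rangle\ge\frac{2}{N-1}\|f\|_2^2.
\]
On the other hand, unitarity and the involution property give
\[
 \|f-T_{\tau_{ij}}f\|_2^2=2\langle f-T_{\tau_{ij}}f,f\rangle,
\]
so
\[
 \sum_{i<j}\|f-f\circ\tau_{ij}\|_2^2=2\binom N2\langle(I-K)f,f\rangle.
\]
Both sides are unchanged when $f$ is replaced by $f-\sfS f$, since $\sfS f$ is invariant. Therefore
\[
 \sum_{i<j}\|f-f\circ\tau_{ij}\|_2^2\ge N(N-1)\cdot\frac{2}{N-1}\,\|f-\sfS f\|_2^2=2N\|f-\sfS f\|_2^2,
\]
which is \eqref{eq:RT}.

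The only point needing care is checking that the cited character bound gives the gap $2/(N-1)$ for $K$ normalized this way. If I preferred to avoid representation theory, I would instead verify the spectrum of the class-sum $\sum_{i<j}\tau_{ij}$ via Jucys--Murphy elements: it acts on the $\lambda$-component by the content sum $\sum_{(a,b)\in\lambda}(b-a)$, which is maximal, $\binom N2$, only for the trivial shape and at most $\binom N2-N$ otherwise. This yields the same bound directly, and I expect this identification of the gap to be the main (but standard) step.

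For \eqref{eq:relative-gradient}, the equality is immediate from $\nabla_{u_{ij}}=(\nabla_i-\nabla_j)/\sqrt2$. For the inequality I would use the complete-graph Laplacian identity, with vectors $v_i=\nabla_if\in\R^2$:
\[
 \sum_{i<j}|v_i-v_j|^2=N\sum_i|v_i|^2-\Bigl|\sum_i v_i\Bigr|^2\le N|\nabla f|^2.
\]
Multiplying by $\tfrac12$ gives the claim.
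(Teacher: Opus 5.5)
Your proposal is correct and is essentially the paper's argument. Both proofs rest on the Diaconis--Shahshahani spectral gap for random transpositions for \eqref{eq:RT}, and on the complete-graph variance identity for \eqref{eq:relative-gradient}.

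The only difference is normalization. You use the non-lazy average $K$, with gap $2/(N-1)$ on the nontrivial isotypic components. The paper uses the lazy kernel $\frac1N I+\frac{N-1}{N}K$, with gap $2/N$. These give the same inequality, and your character-ratio and content-sum computations just spell out what the paper cites.
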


\begin{proof}
In the normalization of Diaconis and Shahshahani \cite[(1.1), Cor.~4, and Rem.~1]{DiaconisShahshahani1981}, the transition kernel assigns mass $1/N$ to the identity and $2/N^2$ to each transposition and has spectral gap $2/N$.  Its Poincar\'e inequality is exactly \eqref{eq:RT}.  The second identity follows from the definition of $\nabla_{u_{ij}}$ and the complete-graph variance identity.
\end{proof}

\begin{corollary}[Label-dependent fluctuations]\label{cor:nonsymmetric}
\begin{equation}
 \|f-\sfS f\|_2^2\le C_{\rm rel}(\beta)\cE_{N,\beta}(f).
\end{equation}
\end{corollary}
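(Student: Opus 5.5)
The plan is to chain the three preceding estimates in sequence: \eqref{eq:RT}, then \Cref{prop:swap} pair by pair, then \eqref{eq:relative-gradient}. The point is that the $N^{-1}$ from the random-transposition gap cancels the $N$ from summing relative gradients over all pairs. I would first prove the bound for real core functions $f\in C_c^\infty(\Omega_N)$ and then extend by closure.

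For a core function $f$, start from \eqref{eq:RT}:
\begin{equation*}
 \|f-\sfS f\|_2^2\le\frac1{2N}\sum_{i<j}\|f-f\circ\tau_{ij}\|_2^2 .
\end{equation*}
This uses only exchangeability of $\mathbb P_{N,\beta}$. That symmetry makes $\sigma\mapsto f\circ\sigma$ an isometric representation, so the Diaconis--Shahshahani Poincar\'e inequality for the random-transposition walk can be applied pointwise in $Z$ and then integrated.

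Next, bound each summand with \Cref{prop:swap}:
\begin{equation*}
 \|f-f\circ\tau_{ij}\|_2^2\le 4C_{\rm rel}(\beta)\int|\nabla_{u_{ij}}f|^2\,\D\mathbb P_{N,\beta}.
\end{equation*}
Summing over pairs and applying \eqref{eq:relative-gradient} pointwise gives
\begin{equation*}
 \|f-\sfS f\|_2^2\le\frac{4C_{\rm rel}(\beta)}{2N}\cdot\frac N2\,\cE_{N,\beta}(f)=C_{\rm rel}(\beta)\cE_{N,\beta}(f).
\end{equation*}
The constants therefore come out exactly as stated.

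To extend to $\mathsf D_{N,\beta}$, note that $\sfS$ is an $L^2$ contraction. So if core functions $f_k\to f$ in form norm, then $f_k-\sfS f_k\to f-\sfS f$ in $L^2$ and $\cE_{N,\beta}(f_k)\to\cE_{N,\beta}(f)$, and the inequality passes to the limit.

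The step I expect to need the most care is the uniformity inside \Cref{prop:swap}. The frozen relative conditional \eqref{eq:relative-law} must fall under \Cref{thm:weighted} with $\gamma=\beta$ and $Q=2\beta$, uniformly in $(c,Z_{-ij})$. This works because the total exponent $\beta(2N-3)/N$ is below $2\beta$, and the weighted theorem is insensitive to point locations and clustering. A second check is that restricting a core function to the $u$-fiber yields an admissible test function for the punctured weighted domain. This holds off a null set of frozen configurations, since the zeros of $p$ are exactly the collision points within the fiber.
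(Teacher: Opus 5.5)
Your proposal is correct and follows the paper's own argument. You chain \eqref{eq:RT}, then \Cref{prop:swap} pair by pair, then \eqref{eq:relative-gradient}, and the $N^{-1}$ and $N$ factors cancel to give $\frac{4C_{\rm rel}(\beta)}{2N}\cdot\frac N2=C_{\rm rel}(\beta)$. Your extension from the core to $\mathsf D_{N,\beta}$ by closure is the same step the paper carries out in the proof of \Cref{thm:full-pi}.
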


\begin{proof}
\Cref{prop:swap}, \eqref{eq:RT}, and \eqref{eq:relative-gradient} give
\begin{equation}
 \|f-\sfS f\|_2^2
 \le \frac{1}{2N}\sum_{i<j}
 4C_{\rm rel}(\beta)
 \int_{\C^N}|\nabla_{u_{ij}}f|^2\,\D \mathbb P_{N,\beta}
 \le \frac{2C_{\rm rel}(\beta)}{N}\cdot\frac N2\,
 \cE_{N,\beta}(f).
\end{equation}
The factor $N^{-1}$ from the random-transposition inequality cancels the factor $N$ from the complete-graph identity, leaving the stated constant.
\end{proof}

\begin{proof}[Proof of \Cref{thm:full-pi}]
For a real core function $f$, the orthogonal decomposition gives
\begin{equation}
 \Var(f)=\Var(\sfS f)+\|f-\sfS f\|_2^2.
\end{equation}
Use the permutation-invariant estimate, \Cref{cor:nonsymmetric}, and contraction of the Dirichlet form under averaging.
This proves \eqref{eq:full-pi} on the core.  The inequality extends to every real-valued $f\in\mathsf D_{N,\beta}$ by form-norm density.
\end{proof}


\section{A defective logarithmic Sobolev inequality}\label{sec:defective}

The goal of this section is to prove, uniformly in the particle number, the defective logarithmic Sobolev inequality stated in \Cref{cor:defective} for arbitrary labeled observables.  Unlike the variance argument of \Cref{sec:full-pi}, the entropy argument neither symmetrizes the density nor uses the orthogonal permutation decomposition.  It instead combines the weighted logarithmic Sobolev inequality \eqref{eq:weighted-lsi} applied to each one-site conditional, the conditional entropy estimate of \Cref{thm:entropy-factorization}, and the uniform repulsive partition bound of \Cref{thm:partition}.  The full Poincar\'e inequality of \Cref{thm:full-pi} re-enters only in \Cref{sec:tightening}, where the defective inequality is tightened.

\subsection{Modulation around thermal equilibrium}\label{sec:equilibrium}

To isolate the interaction term used in the entropy argument, we rewrite the interacting Gibbs measure relative to a product reference measure.  The appropriate reference is the thermal equilibrium measure: its Euler--Lagrange equation cancels the one-body terms and yields the exact canonical/modulated identity \eqref{eq:modulated}.  In that identity, the density relative to the product reference is a normalized exponential weight built from the modulated energy introduced in \eqref{eq:modulated-energy} below.  Its specialization to the thermal equilibrium measure gives the interaction in \eqref{eq:A-N}, whose exponential moments are controlled by \Cref{thm:partition}.  We therefore first construct the equilibrium measure and record the properties needed for this representation.

Let $\g(x)=-\log|x|$.  For $\mu=\rho\D x$, define the mean-field free energy $\cF_\beta$ by
\begin{equation}\label{eq:free-energy}
 \cF_\beta(\mu)
 :=\frac12\int_{\C}|x|^2\,\D\mu
 +\frac12\iint_{\C^2} \g(x-y)\,\D\mu(x)\D\mu(y)
 +\frac1\beta\int_{\C}\rho\log\rho\,\D x.
\end{equation}

For probability measures $\mathsf Q\ll\mathsf P$ on a common measurable space $\mathsf X$, denote their relative entropy by
\begin{equation}
 \mathsf H(\mathsf Q\mid\mathsf P)
 :=\int_{\mathsf X}\log\!\left(\frac{\D\mathsf Q}{\D\mathsf P}\right)\,\D\mathsf Q,
\end{equation}
with the usual $+\infty$ convention otherwise.

\leavevmode The existence, uniqueness, and Euler--Lagrange characterization of the minimizer are standard in the theory of thermal equilibrium measures; compare \cite[Props.~2.16 and~2.19]{Serfaty2024Lectures}.  We include the proof to record the present normalization and to establish the regularity, tail, and compact-temperature bounds used below.

\begin{proposition}[Thermal equilibrium and compact-temperature bounds]\label{prop:equilibrium}
For every $\beta>0$, \eqref{eq:free-energy} has a unique minimizer $\mu_\beta=\rho_\beta\D x$, whose density $\rho_\beta$ is radial, strictly positive, bounded, and smooth.  Writing $U_\beta$ for the logarithmic potential generated by $\mu_\beta$, there exists a constant $C_\beta\in\R$ such that
\begin{equation}\label{eq:EL}
 \log\rho_\beta(x)
 =-\beta\left(\frac{|x|^2}{2}+U_\beta(x)\right)+C_\beta,
 \qquad U_\beta=\g*\mu_\beta.
\end{equation}
Moreover, for every multi-index $\alpha$ there exists a constant $0<C_{\beta,\alpha}'<\infty$ such that

\begin{equation}\label{eq:mu-tail}
 |\partial^\alpha\rho_\beta(x)|
 \le C_{\beta,\alpha}'(1+|x|)^{\beta+|\alpha|}e^{-\beta|x|^2/2}.
\end{equation}

Let
\begin{equation}
 c_\beta:=\iint_{\C^2} \g(x-y)\,\D\mu_\beta(x)\D\mu_\beta(y).
\end{equation}
For every compact interval $I=[\beta_-,\beta_+]\Subset(0,\infty)$,
\begin{equation}\label{eq:equilibrium-exact-bounds}
 \|\rho_\beta\|_{L^\infty(\C)}\le\frac1\pi,
 \qquad
 \int_{\C}|x|^2\,\D\mu_\beta(x)=\frac12+\frac2\beta
 \qquad (\beta\in I).
\end{equation}
Moreover, for every $m\ge0$ and $\theta>0$,
\begin{equation}\label{eq:equilibrium-compact-bounds}
 \sup_{\beta\in I}\left(
 |c_\beta|+
 \int_{\C}(1+|x|)^m\,\D\mu_\beta(x)+
 \int_{\C}e^{-\theta\beta U_\beta(x)}\,\D\mu_\beta(x)
 \right)<\infty.
\end{equation}

\end{proposition}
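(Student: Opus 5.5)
The plan follows the standard route for thermal equilibrium measures: direct method, strict convexity, Euler--Lagrange equation, then a bootstrap. The two exact identities in \eqref{eq:equilibrium-exact-bounds} come from a maximum-principle argument and a dilation (virial) argument.

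\emph{Existence and uniqueness.} I use $|x-y|\le(1+|x|)(1+|y|)$, which gives
\[
\g(x-y)\ge-\log(1+|x|)-\log(1+|y|).
\]
Together with the Gaussian-type lower bound $\int\rho\log\rho\ge-C-\tfrac14\int|x|^2\,\D\mu$, this shows $\cF_\beta$ is bounded below and that its sublevel sets are tight with uniformly bounded second moments. The second-moment bound comes from the entropy inequality relative to a Gaussian with variance large compared to $1/\beta$. Lower semicontinuity for narrow convergence then gives a minimizer:
\begin{itemize}
\item the interaction term is handled by truncating $\g$ from above and using the moment bound;
\item the entropy term is lower semicontinuous.
\end{itemize}
For uniqueness I use strict convexity along linear interpolations. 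The entropy is strictly convex. The logarithmic interaction is positive definite on signed neutral measures of finite energy, since its Fourier transform is $c|\xi|^{-2}$ on mean-zero data, so it is convex. Rotation invariance of $\cF_\beta$ together with uniqueness forces $\rho_\beta$ to be radial.

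\emph{Euler--Lagrange equation, regularity and tails.} I first show $\rho_\beta>0$ a.e. Otherwise, perturbing by a small amount of mass where $\rho_\beta=0$ strictly decreases $\cF_\beta$, because $s\log s$ has infinite slope at $0$. Variations $\mu_\beta+\varepsilon(\eta-\mu_\beta)$ then yield \eqref{eq:EL}. Next I bound the potential:
\begin{itemize}
\item the finite second moment gives $U_\beta(x)\ge-\log(1+|x|)-C$;
\item local integrability of $\log$ against $\mu_\beta$ gives an upper bound on $U_\beta$, which holds once $\rho_\beta$ is known to be bounded; that boundedness follows from the lower bound on $U_\beta$ and \eqref{eq:EL}.
\end{itemize}
Hence $\rho_\beta\le C(1+|x|)^\beta e^{-\beta|x|^2/2}$, which is \eqref{eq:mu-tail} for $\alpha=0$. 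With $\rho_\beta\in L^\infty$ with Gaussian decay, $U_\beta$ is $C^{1,\alpha}$. Bootstrapping through \eqref{eq:EL} (Schauder estimates for $\Delta U_\beta=-2\pi\rho_\beta$) gives smoothness. Since $\nabla U_\beta=O(1/|x|)$ at infinity, each derivative of $\rho_\beta=e^{C_\beta-\beta(|x|^2/2+U_\beta)}$ costs at most a factor $(1+|x|)$, which gives \eqref{eq:mu-tail}.

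\emph{The exact bounds.} The bound $\|\rho_\beta\|_\infty\le1/\pi$ comes from taking the Laplacian of \eqref{eq:EL}. Since $\Delta\tfrac{|x|^2}{2}=2$ and $\Delta U_\beta=-2\pi\rho_\beta$, we get
\[
\Delta\log\rho_\beta=-\beta(2-2\pi\rho_\beta).
\]
At a global maximum of $\rho_\beta$, which exists by smoothness and decay, $\Delta\log\rho_\beta\le0$, so $\rho_\beta\le1/\pi$ everywhere.

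The second moment comes from dilation stationarity. For $\mu^\lambda$, the pushforward of $\mu_\beta$ under $x\mapsto\lambda x$, one has
\[
\cF_\beta(\mu^\lambda)=\tfrac{\lambda^2}{2}m-\tfrac12\log\lambda+(\text{interaction})+\tfrac1\beta\Big(\int\rho_\beta\log\rho_\beta-2\log\lambda\Big).
\]
Here $m$ is the second moment, and the interaction term is unchanged because $\g(\lambda x)=\g(x)-\log\lambda$. Setting $\tfrac{d}{d\lambda}|_{\lambda=1}=0$ gives $m=\tfrac12+\tfrac2\beta$.

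\emph{Compact-temperature bounds.} For \eqref{eq:equilibrium-compact-bounds} I make every constant above depend on $\beta$ only through $\beta_\pm$:
\begin{itemize}
\item The second moment is explicit.
\item $|c_\beta|$ is bounded above using $\|\rho_\beta\|_\infty\le1/\pi$ (local integrability of $\log$) and bounded below using the moment bound.
\item Hence $|C_\beta|$ is bounded by normalizing \eqref{eq:EL}.
\item The tail constant in \eqref{eq:mu-tail} at $\alpha=0$ is then uniform, and this controls all polynomial moments.
\item Finally, $e^{-\theta\beta U_\beta}\le C(1+|x|)^{\theta\beta}$ reduces the last term to a polynomial moment.
\end{itemize}

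\emph{Main obstacle.} I expect the delicate step to be the rigorous justification of the Euler--Lagrange derivation and of the dilation-derivative computation with the singular log kernel. Both require knowing finiteness and differentiability of each term in $\lambda$ and $\varepsilon$. My plan is to establish boundedness and decay of $\rho_\beta$ first, from a preliminary weak form of the Euler--Lagrange inequality obtained by competitor variations, before differentiating.
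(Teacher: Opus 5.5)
Your proposal is correct and follows essentially the same route as the paper. The shared steps are: the direct method with second-moment tightness, convexity from positive-definiteness of the logarithmic kernel on neutral data, the Euler--Lagrange equation via admissible variations, an elliptic bootstrap, the maximum principle applied to $\Delta\log\rho_\beta=2\beta(\pi\rho_\beta-1)$, the dilation identity for the second moment, and uniform control of $C_\beta$ through the upper bound on $U_\beta$ that $\|\rho_\beta\|_\infty\le1/\pi$ supplies.

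Two small points need fixing. First, your ``one factor of $(1+|x|)$ per derivative'' step silently needs all derivatives of $U_\beta$ of order at least two to be bounded. The paper gets this from the radial identity $u_\beta'(r)=-M_\beta(r)/r$; you can get it inductively from $\partial^\alpha\nabla U_\beta=\nabla\g*\partial^\alpha\rho_\beta$. Second, the coefficient $\tfrac14$ in your entropy lower bound must shrink with $\beta$, e.g.\ to $a$ with $a<\beta/2$, which is what your own choice of a wide Gaussian gives.
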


\begin{proof}
A Gaussian competitor has finite free energy.  The elementary bounds
\begin{equation}
 -\log|x-y|\ge-\log(1+|x|)-\log(1+|y|),
 \qquad \log(1+r)\le\delta r^2+C_\delta,
\end{equation}
and nonnegativity of relative entropy with respect to $\gamma_a(x)=(a/\pi)e^{-a|x|^2}$ imply, for suitable $a,\delta>0$, that there exists $K_\beta<\infty$ such that
\begin{equation}
 \cF_\beta(\mu)\ge\frac14\int_{\C}|x|^2\,\D\mu-K_\beta.
\end{equation}
Thus, a minimizing sequence is tight with bounded second moment.  For fixed $0<a<\beta/2$,
\begin{equation}
 \frac12\int_{\C}|x|^2\D\mu+\frac1\beta\int_{\C}\rho\log\rho
 =\frac1\beta \mathsf H(\mu\mid\gamma_a)
 +\left(\frac12-\frac a\beta\right)\int_{\C}|x|^2\D\mu
 -\frac1\beta\log(\pi/a),
\end{equation}
which is weakly lower semicontinuous.  The positive part of $\g$ is nonnegative and lower semicontinuous; its negative part is bounded by the two logarithmic moments and is uniformly integrable under the second-moment bound.  Truncation and Portmanteau therefore give lower semicontinuity of the interaction energy.  The direct method gives a minimizer.

For a smooth compactly supported signed density $\eta$ of mass zero, let $h=\g*\eta$.  Since $-\Delta h=2\pi\eta$ and the zero mass removes the boundary term at infinity,
\begin{equation}
 \iint_{\C^2} \g(x-y)\eta(x)\eta(y)\,\D x\D y
 =\frac1{2\pi}\int_{\C}|\nabla h|^2\,\D x\ge0.
\end{equation}
Approximation extends convexity to the finite-energy domain.  The entropy is strictly convex, hence the minimizer is unique; rotational invariance implies radiality.

Multiplicative bounded compactly supported variations, followed by truncation, give the Euler--Lagrange equation \eqref{eq:EL} almost everywhere.  Since the right derivative of $r\log r$ at $r=0$ is $-\infty$, a one-sided variation adding mass to any positive-measure subset of $\{\rho_\beta=0\}$ contradicts minimality.  Since
\begin{equation}
 U_\beta(x)\ge-\log(1+|x|)-\int_{\C}\log(1+|y|)\,\D\mu_\beta(y),
\end{equation}
\eqref{eq:EL} gives the case $\alpha=0$ of \eqref{eq:mu-tail}; in particular $\rho_\beta\in L^\infty$.  Now, $-\Delta U_\beta=2\pi\rho_\beta$.  Local elliptic regularity gives $U_\beta\in W^{2,p}_{\rm loc}$ for every finite $p$, hence $C^{1,\theta}_{\rm loc}$ for every $0<\theta<1$; equation \eqref{eq:EL} bootstraps to $C^\infty$, and its exponential form gives strict positivity.

It remains to prove the derivative bounds.  Write $\rho_\beta(x)=q_\beta(r)$ and $U_\beta(x)=u_\beta(r)$, where $r=|x|$, and set
\begin{equation}
 M_\beta(r):=2\pi\int_0^r s q_\beta(s)\,\D s.
\end{equation}
Radiality and $-\Delta U_\beta=2\pi\rho_\beta$ give
\begin{equation}
 u_\beta'(r)=-\frac{M_\beta(r)}r,
 \qquad 0\le M_\beta(r)\le1.
\end{equation}
For $r\ge1$ and $k\ge2$, differentiating this identity shows that $u_\beta^{(k)}$ is bounded once $q_\beta^{(j)}$ is bounded for $0\le j\le k-2$; for $k=1$, boundedness follows directly from $0\le M_\beta\le1$.  Differentiating the exponential form of \eqref{eq:EL} and inducting on $k$ therefore gives
\begin{equation}
 |q_\beta^{(k)}(r)|
 \le C_{\beta,k}'(1+r)^{\beta+k}e^{-\beta r^2/2},
 \qquad r\ge1.
\end{equation}
Indeed, at the $k$th step, the derivatives of $r^2/2+u_\beta(r)$ of orders $2,\ldots,k$ are bounded, while its first derivative is $O(1+r)$.  For $|x|\ge1$, each Cartesian derivative of order $k$ of a radial function is a linear combination, with bounded angular coefficients, of terms $r^{j-k}q_\beta^{(j)}(r)$ with $1\le j\le k$.  Smoothness controls the unit ball, and \eqref{eq:mu-tail} follows.

We next establish the assertions uniform on compact inverse-temperature intervals.  Applying the Laplacian to \eqref{eq:EL} and using $-\Delta U_\beta=2\pi\rho_\beta$ gives
\begin{equation}
 \Delta\log\rho_\beta=2\beta(\pi\rho_\beta-1).
\end{equation}
The tail bound \eqref{eq:mu-tail} shows that $\rho_\beta$ attains its maximum.  At a maximum, the left-hand side is nonpositive, proving the first assertion in \eqref{eq:equilibrium-exact-bounds}.

For $r>0$, let $\mu_{\beta,r}$ be the pushforward of $\mu_\beta$ under $x\mapsto rx$.  The three terms of the free energy change by
\begin{equation}
 \frac12\int_{\C}|x|^2\,\D\mu_{\beta,r}
 =\frac{r^2}{2}\int_{\C}|x|^2\,\D\mu_\beta,
 \quad
 \frac12\iint_{\C^2}\g(x-y)\,\D\mu_{\beta,r}^{\otimes2}
 =\frac12c_\beta-\frac12\log r,
\end{equation}
and
\begin{equation}
 \int_{\C}\rho_{\beta,r}\log\rho_{\beta,r}
 =\int_{\C}\rho_\beta\log\rho_\beta-2\log r.
\end{equation}
Differentiating at the minimizing scale $r=1$ gives the second identity in \eqref{eq:equilibrium-exact-bounds}.

Consequently,
\begin{equation}
 L_I:=\sup_{\beta\in I}\int_{\C}\log(1+|y|)\,\D\mu_\beta(y)<\infty.
\end{equation}
The elementary logarithmic bound used in \eqref{eq:mu-tail} gives
\begin{equation}\label{eq:U-compact-lower}
 U_\beta(x)\ge-\log(1+|x|)-L_I.
\end{equation}
On the other hand, if $|x|\le1$, the density bound and the fact that $\g\le0$ on $\{|x-y|\ge1\}$ give
\begin{equation}
 U_\beta(x)
 \le\frac1\pi\int_{|h|<1}-\log|h|\,\D h=\frac12.
\end{equation}
Since \eqref{eq:EL} and normalization imply
\begin{equation}
 e^{C_\beta}
 =\left(\int_{\C}e^{-\beta(|x|^2/2+U_\beta(x))}\,\D x\right)^{-1},
\end{equation}
integration over the unit disk yields $e^{C_\beta}\le\pi^{-1}e^{\beta_+}$.  Combining this with \eqref{eq:U-compact-lower} gives the uniform tail
\begin{equation}\label{eq:mu-compact-tail}
 \rho_\beta(x)
 \le C_I e^{-\beta_-|x|^2/2}(1+|x|)^{\beta_+},
 \qquad \beta\in I.
\end{equation}
This proves the uniform polynomial-moment bound.  Furthermore,
\begin{equation}
 \iint_{\C^2}\g^+(x-y)\,\D\mu_\beta(x)\D\mu_\beta(y)\le\frac12,
 \qquad
 \g^-(x-y)\le\log(1+|x|)+\log(1+|y|),
\end{equation}
so $|c_\beta|$ is uniformly bounded.  Finally, \eqref{eq:U-compact-lower} gives
\begin{equation}
 e^{-\theta\beta U_\beta(x)}
 \le e^{\theta\beta_+L_I}(1+|x|)^{\theta\beta_+},
\end{equation}
and the last assertion of \eqref{eq:equilibrium-compact-bounds} follows from \eqref{eq:mu-compact-tail}.

\end{proof}

\leavevmode Having constructed the product reference, we now introduce the modulated energy and its associated partition function and Gibbs measure, specialize them to thermal equilibrium, and establish the exact identity with the original Gibbs measure.

For $X_N=(x_1,\ldots,x_N)$, denote its empirical measure by
\begin{equation}
 \mu_N:=\frac1N\sum_{i=1}^N\delta_{x_i}.
\end{equation}
For a reference probability measure $\mu$, let $\Delta:=\{(x,x):x\in\C\}$ denote the diagonal and define the off-diagonal modulated energy $\mathsf F_N(X_N,\mu)$ by
\begin{equation}\label{eq:modulated-energy}
 \mathsf F_N(X_N,\mu)
 :=\frac12\iint_{(\C^2)\setminus\Delta}
 \g(x-y)\,\D(\mu_N-\mu)(x)\D(\mu_N-\mu)(y),
\end{equation}
where the empirical self-interactions are omitted.  For $t>0$, let the modulated partition function $\mathsf K_{N,t}(\mu)$ and the associated modulated Gibbs measure $\mathbb Q_{N,t}(\mu)$ be given by
\begin{align}
 \mathsf K_{N,t}(\mu)
 &:=\int_{\C^N} e^{-tN\mathsf F_N(X_N,\mu)}\,\D\mu^{\otimes N},\label{eq:modulated-partition}\\
 \D\mathbb Q_{N,t}(\mu)(X_N)
 &:=\mathsf K_{N,t}(\mu)^{-1}
 e^{-tN\mathsf F_N(X_N,\mu)}\,\D\mu^{\otimes N}(X_N).
 \label{eq:modulated-gibbs}
\end{align}

Expanding \eqref{eq:modulated-energy} at $\mu=\mu_\beta$ gives
\begin{equation}\label{eq:A-N}
 A_{N,\beta}(X_N)
 :=N\mathsf F_N(X_N,\mu_\beta)
 =\frac1{2N}\sum_{i\ne j}\g(x_i-x_j)
 -\sum_{i=1}^NU_\beta(x_i)+\frac N2c_\beta.
\end{equation}

\leavevmode The next identity is the thermal-equilibrium splitting in the terminology of \cite[Lemma~5.2 and (5.1.9)--(5.1.14)]{Serfaty2024Lectures}; the proof records the present normalization directly.

\begin{proposition}[Canonical/modulated identity]\label{prop:modulated}
One has the exact thermal-equilibrium splitting
\begin{equation}\label{eq:modulated}
 \mathbb P_{N,\beta}=\mathbb Q_{N,\beta}(\mu_\beta),
 \qquad
 \D\mathbb P_{N,\beta}
 =\mathsf K_{N,\beta}(\mu_\beta)^{-1}e^{-\beta A_{N,\beta}}\,\D\mu_\beta^{\otimes N}.
\end{equation}
\end{proposition}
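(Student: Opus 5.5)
The plan is to compute the density of $\mathbb P_{N,\beta}$ with respect to $\mu_\beta^{\otimes N}$ directly, using the Euler--Lagrange equation \eqref{eq:EL} to cancel all one-body terms. By \Cref{prop:equilibrium}, $\rho_\beta$ is smooth and strictly positive, so $\mathbb P_{N,\beta}\ll\mu_\beta^{\otimes N}$. On $\Omega_N$ (which has full Lebesgue measure) the density is
\begin{equation}
 \frac{\D\mathbb P_{N,\beta}}{\D\mu_\beta^{\otimes N}}(X_N)
 =\mathsf Z_{N,\beta}^{-1}\exp\Big(-\beta\mathcal H_N(X_N)-\sum_{i=1}^N\log\rho_\beta(x_i)\Big).
\end{equation}

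Substituting \eqref{eq:EL} gives $-\log\rho_\beta(x_i)=\beta(|x_i|^2/2+U_\beta(x_i))-C_\beta$. The quadratic confinement in $\beta\mathcal H_N$ then cancels exactly, and the exponent becomes
\begin{equation}
 -\frac{\beta}{N}\sum_{i<j}\g(x_i-x_j)+\beta\sum_iU_\beta(x_i)-NC_\beta.
\end{equation}
Since $\frac1N\sum_{i<j}=\frac1{2N}\sum_{i\ne j}$, comparing with \eqref{eq:A-N} identifies this exponent as $-\beta A_{N,\beta}(X_N)+\frac{N\beta}{2}c_\beta-NC_\beta$. The last two terms are constants independent of $X_N$. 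Hence $\D\mathbb P_{N,\beta}=K^{-1}e^{-\beta A_{N,\beta}}\D\mu_\beta^{\otimes N}$ for some constant $K\in(0,\infty)$.

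Both sides are probability measures, so integrating forces $K=\mathsf K_{N,\beta}(\mu_\beta)$. This requires $\mathsf K_{N,\beta}(\mu_\beta)$ to be finite and positive. I will obtain this from the same computation run in reverse, $\mathsf K_{N,\beta}(\mu_\beta)=\mathsf Z_{N,\beta}\,e^{N\beta c_\beta/2-NC_\beta}$, combined with $0<\mathsf Z_{N,\beta}<\infty$ from \Cref{prop:wellposed}. This yields \eqref{eq:modulated}. The identification with $\mathbb Q_{N,\beta}(\mu_\beta)$ then follows from \eqref{eq:modulated-gibbs} with $t=\beta$, once $N\mathsf F_N(X_N,\mu_\beta)=A_{N,\beta}(X_N)$ is checked.

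The main point needing care is exactly this expansion of the off-diagonal modulated energy \eqref{eq:modulated-energy} into \eqref{eq:A-N}. Expanding the bilinear form in $\mu_N-\mu_\beta$ produces three pieces:
\begin{itemize}
 \item the empirical–empirical term with the diagonal removed, which gives $\frac1{2N^2}\sum_{i\ne j}\g(x_i-x_j)$;
 \item the two cross terms, which give $-\frac1N\sum_iU_\beta(x_i)$, since $\mu_\beta$ charges no diagonal and so removing $\Delta$ does not affect them;
 \item the $\mu_\beta$–$\mu_\beta$ term, which gives $\frac12c_\beta$, again unaffected because $\mu_\beta\otimes\mu_\beta(\Delta)=0$.
\end{itemize}
Every term is finite on $\Omega_N$ because $U_\beta$ is continuous and $|c_\beta|<\infty$ by \Cref{prop:equilibrium}. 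Multiplying by $N$ gives \eqref{eq:A-N}.
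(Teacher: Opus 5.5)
Your proposal is correct and is essentially the paper's argument: you substitute the Euler--Lagrange equation \eqref{eq:EL} into $\mu_\beta^{\otimes N}$, cancel the confinement and one-body $U_\beta$ terms, and absorb the configuration-independent constants into the normalization, while your expansion of the off-diagonal modulated energy just makes explicit the identification \eqref{eq:A-N} that the paper records before the proposition. One harmless slip: carrying out the computation correctly gives $\mathsf K_{N,\beta}(\mu_\beta)=\mathsf Z_{N,\beta}\,e^{-N\beta c_\beta/2+NC_\beta}$, the reciprocal of the exponential factor you wrote, and this does not affect the finiteness and positivity you actually use.
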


\begin{proof}
Insert \eqref{eq:EL} into $\mu_\beta^{\otimes N}$ and multiply by $e^{-\beta A_{N,\beta}}$.  The terms $\sum_iU_\beta(x_i)$ cancel, leaving exactly the confinement and off-diagonal pair exponent in \eqref{eq:law}; all remaining terms are configuration-independent.
\end{proof}


\subsection{The repulsive partition estimate}\label{sec:partition}

The entropy argument uses only the following uniform bound for the partition function defined in \eqref{eq:modulated-partition}.

\begin{proposition}[Repulsive logarithmic partition bound]\label{thm:partition}
For every compact interval $I\Subset(0,\infty)$ and every $T>0$,
\begin{equation}\label{eq:partition}
 \sup_{\beta\in I}\sup_{0<t\le T}\sup_{N\ge2}
 \mathsf K_{N,t}(\mu_\beta)<\infty.
\end{equation}
\end{proposition}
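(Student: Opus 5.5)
The plan is to obtain \eqref{eq:partition} from the uniform modulated free-energy/partition bound of Delgadino and Gvalani \cite[Theorem~2.5 and Appendix~A]{DelgadinoGvalani2025}. We would check their hypotheses for the reference measures $\mu_\beta$, $\beta\in I$, with constants controlled only by the quantities already made uniform in \Cref{prop:equilibrium}. Their bound has the form $\sup_N\mathsf K_{N,t}(\mu)\le C$, with $C$ depending on $t$ and on finitely many norms of $\mu$. The task is therefore to list those norms and bound them uniformly on $I$. We would then take the supremum over $0<t\le T$ by tracking the monotone dependence of their constant on $t$, rather than by any monotonicity of $t\mapsto\mathsf K_{N,t}$, which is not available in general.

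First we would isolate the structural input: the logarithmic kernel is positive definite on mass-zero measures, by the identity $\iint\g\,\eta\,\eta=(2\pi)^{-1}\|\nabla(\g*\eta)\|_2^2$ from the proof of \Cref{prop:equilibrium}. The only obstruction to $\mathsf F_N\ge0$ is the removed diagonal. To handle it, we would smear each point charge at a scale $\varepsilon$ by a radial bump $\eta_\varepsilon$. Superharmonicity of $\g$ gives $\g*\eta_\varepsilon*\eta_\varepsilon\le\g$ pointwise, so replacing $\g$ by the smeared kernel lowers every off-diagonal pair term. The $\mu$-terms change only by $O(\varepsilon^2\log\varepsilon^{-1})\|\rho_\beta\|_\infty$ per particle, using $\|\rho_\beta\|_\infty\le1/\pi$ from \eqref{eq:equilibrium-exact-bounds}. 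Adding back the smeared self-energies $\tfrac12\g_\varepsilon(0)\approx\tfrac12\log\varepsilon^{-1}$ produces a nonnegative full smeared energy. The remaining exponential moment would then be computed by a Hubbard--Stratonovich (Gaussian-field) representation, in which $e^{-tN\mathsf F^\varepsilon_{\rm full}}$ becomes the average over a Gaussian field $\xi$ with covariance $\g_\varepsilon$ of $\bigl|\int e^{i\sqrt{t/N}(\xi-\langle\xi,\mu_\beta\rangle)}\D\mu_\beta\bigr|^N$. A second-order cumulant bound then reduces this to a Fredholm-type determinant of $I+t\,\g_\varepsilon$ on $L^2_0(\mu_\beta)$.

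The step we expect to be the main obstacle is the cancellation of logarithmic divergences. The trace $t\,\g_\varepsilon(0)$ in the determinant and the self-energy correction $t\g_\varepsilon(0)/2$ both diverge like $\log\varepsilon^{-1}$. They must cancel at leading order, leaving a regularized (Carleman $\det_2$-type) determinant bounded independently of $N$ once $\varepsilon=\varepsilon(N)\to0$ is chosen suitably. Two error terms must also be controlled uniformly in $N$: the third-order cumulant remainder and the contribution of close pairs. This is exactly the content of \cite[Appendix~A]{DelgadinoGvalani2025}, and we would follow it rather than redo it. The points requiring care are the following uniformity inputs, all furnished by \eqref{eq:equilibrium-exact-bounds}--\eqref{eq:equilibrium-compact-bounds} and the tail bound \eqref{eq:mu-tail}:
\begin{itemize}
\item the Hilbert--Schmidt norm of $\g$ on $L^2(\mu_\beta)$, which is controlled by the bounded density and the polynomial moments;
\item the exponential integrability $\int e^{-\theta\beta U_\beta}\D\mu_\beta$, which is needed when the one-body terms $-\sum_iU_\beta(x_i)$ in \eqref{eq:A-N} are separated by H\"older's inequality;
\item the constant $c_\beta$.
\end{itemize}
Since every constant in this chain is a continuous function of $t\in(0,T]$ and of these uniformly bounded quantities, \eqref{eq:partition} would follow.
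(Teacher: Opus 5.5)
Your route matches the paper's: quote Delgadino--Gvalani \cite[Theorem~2.5 and Appendix~A]{DelgadinoGvalani2025} and check that the data of the base measure on which their constant depends are uniform for $\beta\in I$. The paper notes that this data is just $\|\rho_\beta\|_{L^\infty}\le 1/\pi$ from \eqref{eq:equilibrium-exact-bounds}. So your extra inputs ($c_\beta$, the moments, the Hilbert--Schmidt norm) are harmless but unnecessary, and you do not need to reconstruct the Appendix. Two points need correcting, however.

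\textbf{The supremum over $0<t\le T$.} You set aside any comparison in $t$ and instead assert that the cited constant is continuous in $t\in(0,T]$. That assertion depends on the internals of the cited proof and is not verified. It is also insufficient, because continuity on a half-open interval does not give boundedness as $t\to0$. The paper closes this step in one line, using only the single coupling $T$. Put $X:=e^{-TN\mathsf F_N(X_N,\mu_\beta)}\ge0$. Jensen's inequality for the concave map $x\mapsto x^{t/T}$ gives $\mathsf K_{N,t}(\mu_\beta)=\E_{\mu_\beta^{\otimes N}}[X^{t/T}]\le\mathsf K_{N,T}(\mu_\beta)^{t/T}\le\max\{1,\mathsf K_{N,T}(\mu_\beta)\}$. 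Equivalently, $t\mapsto\log\mathsf K_{N,t}$ is convex and vanishes at $t=0$. So you are right that monotonicity is unavailable, but it is not needed.

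\textbf{The role of $\int e^{-\theta\beta U_\beta}\D\mu_\beta$.} You say this moment is needed when the one-body terms of $A_{N,\beta}$ are split off by H\"older. That step would fail, for two reasons. First, the sign is wrong: $e^{-tA_{N,\beta}}$ contains $e^{+t\sum_iU_\beta(x_i)}$, so splitting would require $e^{+\theta U_\beta}$. Second, and more seriously, any such H\"older separation destroys the cancellation built into the modulated energy. By strict Jensen, $\frac1{p'}\log\int e^{p'tU_\beta}\D\mu_\beta>t\int U_\beta\,\D\mu_\beta=tc_\beta$, so the resulting bound is at least of order $e^{cN}$ and cannot be uniform in $N$. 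In the paper this exponential moment enters only in the defect estimate \eqref{eq:defect-moment} of \Cref{thm:entropy-factorization}. There the extra one-body term carries a factor $N^{-1}$ and is absorbed by concavity of $x\mapsto x^{1/N}$. It plays no role in \Cref{thm:partition}.
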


\begin{proof}

We quote the estimate in the form needed here and refer to Delgadino and Gvalani \cite[Theorem~2.5 and Appendix~A]{DelgadinoGvalani2025} for the partition-function argument.  Their theorem, applied at coupling $T$, yields a bound depending only on $T$ and the $L^\infty$ norm of the base density.  By \eqref{eq:equilibrium-exact-bounds}, this norm is at most $1/\pi$ for every $\beta\in I$, so the bound at $t=T$ is uniform in $\beta$, as well as in $N$.  For $0<t\le T$, H\"older's inequality gives
\begin{equation}
 \mathsf K_{N,t}(\mu_\beta)
 \le \mathsf K_{N,T}(\mu_\beta)^{t/T},
\end{equation}
which proves \eqref{eq:partition}.  The cited statement is formulated for a more general reference measure and in a different normalization; \eqref{eq:partition} is the specialization used in the present proof rather than a verbatim restatement.

\end{proof}

Jensen's inequality also gives the uniform lower bound
\begin{equation}\label{eq:K-lower}
 \mathsf K_{N,t}(\mu_\beta)
 \ge\exp\{-t\E_{\mu_\beta^{\otimes N}}[A_{N,\beta}]\}
 =\exp(tc_\beta/2),
 \qquad t>0.
\end{equation}
By \eqref{eq:equilibrium-compact-bounds}, this lower bound is uniform for $\beta$ in a compact interval and $t$ in a bounded interval.

\subsection{Conditional entropy and the additive defect}\label{sec:entropy}

The conditional estimate below is formulated for an arbitrary labeled density.  Its proof uses the entropy chain rule rather than the orthogonal decomposition from \Cref{sec:full-pi}.

For $i\in[N]$, denote by $X_{-i}$ the vector of outside coordinates,
\begin{equation}
 X_{-i}:=(x_1,\ldots,x_{i-1},x_{i+1},\ldots,x_N).
\end{equation}
For $\mathsf Q\ll\mathbb P_{N,\beta}$, denote by $\mathsf Q_i(\cdot\mid X_{-i})$ and $\mathbb P_i(\cdot\mid X_{-i})$ the one-site conditional laws under $\mathsf Q$ and $\mathbb P_{N,\beta}$, respectively, and by $\mathsf Q_{-i}$ the $X_{-i}$-marginal of $\mathsf Q$.  Denote the summed one-site conditional relative entropy by

\begin{equation}
 \cS_N(\mathsf Q)
 :=\sum_{i=1}^N\E_{\mathsf Q}\Bigl[
 \mathsf H\bigl(\mathsf Q_i(\cdot\mid X_{-i})\mid \mathbb P_i(\cdot\mid X_{-i})\bigr)\Bigr].
\end{equation}

\begin{proposition}[Conditional entropy inequality with an additive defect]\label{thm:entropy-factorization}
Assume $\mathsf H(\mathsf Q\mid\mathbb P_{N,\beta})<\infty$ and
$A_{N,\beta}+N^{-1}\sum_{i=1}^NU_\beta(x_i)\in L^1(\mathsf Q)$.  Then, for every $\lambda>1$,
\begin{align}
 \cS_N(\mathsf Q)
 &\ge\left(1-\frac1\lambda\right)\mathsf H(\mathsf Q\mid\mathbb P_{N,\beta})
 -\log \mathsf K_{N,\beta}(\mu_\beta)\notag\\
 &\quad-\frac1\lambda\log\E_{\mathbb P_{N,\beta}}\left[
 \exp\left\{-\lambda\beta\left(A_{N,\beta}
 +\frac1N\sum_{i=1}^NU_\beta(x_i)\right)\right\}\right].\label{eq:entropy-factorization}
\end{align}
Moreover,
\begin{equation}\label{eq:D-uniform}
 \mathfrak D_{\beta,\lambda}:=\sup_{M\ge2}\left\{\log \mathsf K_{M,\beta}(\mu_\beta)+\lambda^{-1}\log\E_{\mathbb P_{M,\beta}}\left[e^{-\lambda\beta\left(A_{M,\beta}+M^{-1}\sum_{i=1}^M U_\beta(x_i)\right)}\right]\right\}<\infty.
\end{equation}
The restricted hypotheses cover $\D\mathsf Q=f^2\,\D\mathbb P_{N,\beta}$ for every bounded collision-free compactly supported smooth $f$ with $\int_{\C^N} f^2\,\D\mathbb P_{N,\beta}=1$; the general finite-entropy form follows by truncation whenever the stated integrability is preserved.
\end{proposition}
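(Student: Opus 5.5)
The plan is to compare everything with the product reference $\mu_\beta^{\otimes N}$ through \eqref{eq:modulated}, apply Han's inequality there, and return to $\mathbb P_{N,\beta}$ by a Donsker--Varadhan (Gibbs variational) step.

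\textbf{Step 1: one-site conditionals.} By \eqref{eq:modulated}, the one-site conditional of $\mathbb P_{N,\beta}$ is
\begin{equation*}
 \mathbb P_i(\D x\mid X_{-i})=z_i^{-1}e^{-\beta\varphi_i(x)}\,\D\mu_\beta(x),
 \qquad
 \varphi_i(x):=\frac1N\sum_{j\ne i}\g(x-x_j)-U_\beta(x),
\end{equation*}
where $z_i:=\int e^{-\beta\varphi_i}\D\mu_\beta$. Hence
\begin{equation*}
 \mathsf H(\mathsf Q_i\mid\mathbb P_i)=\mathsf H(\mathsf Q_i\mid\mu_\beta)+\beta\,\E_{\mathsf Q_i}[\varphi_i]+\log z_i .
\end{equation*}
For the normalizer, Jensen's inequality gives $\log z_i\ge-\beta\int\varphi_i\,\D\mu_\beta$. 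Using $U_\beta=\g*\mu_\beta$ and the definition of $c_\beta$, this becomes
\begin{equation*}
 \log z_i\ge-\frac\beta N\sum_{j\ne i}U_\beta(x_j)+\beta c_\beta .
\end{equation*}

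\textbf{Step 2: summing over sites.} From \eqref{eq:A-N},
\begin{equation*}
 \sum_i\varphi_i(x_i)=2A_{N,\beta}+\sum_iU_\beta(x_i)-Nc_\beta .
\end{equation*}
Summing the bounds of Step~1 over $i$, the $c_\beta$ terms cancel, and we obtain
\begin{equation*}
 \cS_N(\mathsf Q)\ge\sum_i\E_{\mathsf Q}\bigl[\mathsf H(\mathsf Q_i\mid\mu_\beta)\bigr]+\beta\,\E_{\mathsf Q}\Bigl[2A_{N,\beta}+\frac1N\sum_iU_\beta(x_i)\Bigr].
\end{equation*}

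\textbf{Step 3: Han's inequality.} The relative-entropy form of Han's inequality for the product measure (Madiman--Tetali) gives
\begin{equation*}
 \sum_i\E_{\mathsf Q}\bigl[\mathsf H(\mathsf Q_i\mid\mu_\beta)\bigr]\ge\mathsf H(\mathsf Q\mid\mu_\beta^{\otimes N}).
\end{equation*}
By \eqref{eq:modulated},
\begin{equation*}
 \mathsf H(\mathsf Q\mid\mu_\beta^{\otimes N})=\mathsf H(\mathsf Q\mid\mathbb P_{N,\beta})-\beta\,\E_{\mathsf Q}[A_{N,\beta}]-\log\mathsf K_{N,\beta}(\mu_\beta).
\end{equation*}
Therefore
\begin{equation*}
 \cS_N(\mathsf Q)\ge\mathsf H(\mathsf Q\mid\mathbb P_{N,\beta})-\log\mathsf K_{N,\beta}(\mu_\beta)+\beta\,\E_{\mathsf Q}[W],
 \qquad W:=A_{N,\beta}+\frac1N\sum_iU_\beta(x_i).
\end{equation*}

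\textbf{Step 4: Donsker--Varadhan.} The Gibbs variational inequality with test function $-\lambda\beta W$ gives
\begin{equation*}
 \beta\,\E_{\mathsf Q}[W]\ge-\lambda^{-1}\mathsf H(\mathsf Q\mid\mathbb P_{N,\beta})-\lambda^{-1}\log\E_{\mathbb P_{N,\beta}}\bigl[e^{-\lambda\beta W}\bigr].
\end{equation*}
Inserting this into Step~3 yields \eqref{eq:entropy-factorization}. The assumed $L^1(\mathsf Q)$ integrability of $W$ and finiteness of $\mathsf H(\mathsf Q\mid\mathbb P_{N,\beta})$ keep every rearrangement legitimate. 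For $\D\mathsf Q=f^2\,\D\mathbb P_{N,\beta}$ with $f$ bounded, smooth, compactly supported and collision-free, all of these quantities are finite because the pair terms are bounded on the support.

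\textbf{Step 5: uniform bound on the defect.} By \eqref{eq:modulated},
\begin{equation*}
 \E_{\mathbb P}\bigl[e^{-\lambda\beta W}\bigr]=\mathsf K_{N,\beta}^{-1}\int e^{-(1+\lambda)\beta A_{N,\beta}}\,e^{-\frac{\lambda\beta}{N}\sum_iU_\beta(x_i)}\,\D\mu_\beta^{\otimes N}.
\end{equation*}
Apply H\"older with exponents $p,p'$, taking $p>1$ close to one. The first factor is $\mathsf K_{N,(1+\lambda)\beta p}(\mu_\beta)^{1/p}$, which is bounded uniformly in $N$ by \Cref{thm:partition} with $T=(1+\lambda)\beta p$. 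The second factor is
\begin{equation*}
 \Bigl(\int e^{-p'\lambda\beta U_\beta/N}\,\D\mu_\beta\Bigr)^{N/p'}\le\Bigl(\int e^{-p'\lambda\beta U_\beta}\,\D\mu_\beta\Bigr)^{1/p'},
\end{equation*}
by Jensen's inequality (the power $1/N$ is concave). The right-hand side is finite by \eqref{eq:equilibrium-compact-bounds}. The prefactor $\mathsf K_{N,\beta}^{-1}$ is controlled by \eqref{eq:K-lower}, and $\log\mathsf K_{N,\beta}$ is bounded above by \Cref{thm:partition}. Together these give $\mathfrak D_{\beta,\lambda}<\infty$.

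The main obstacle is Step~3: invoking the Madiman--Tetali product-space Han inequality with the correct direction and conditional-measure measurability. A secondary technical point is justifying the Jensen bound on $\log z_i$, which requires $\varphi_i\in L^1(\mu_\beta)$; this holds by the moment bounds of \Cref{prop:equilibrium}.
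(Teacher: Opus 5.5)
Your proposal is correct and follows essentially the same route as the paper: Han--Shearer tensorization against $\mu_\beta^{\otimes N}$, Jensen on the one-site normalizers with the ordered-pair count giving $2A_{N,\beta}+N^{-1}\sum_iU_\beta(x_i)$, the modulated identity \eqref{eq:modulated}, Donsker--Varadhan with test function $-\lambda\beta W$, and, for the defect, a H\"older split followed by concavity of $x^{1/N}$, \Cref{thm:partition}, and \eqref{eq:K-lower}. The differences are cosmetic. You tilt before invoking Han's inequality, whereas the paper applies Han first. You also use general H\"older exponents where the paper uses Cauchy--Schwarz; choosing $p$ close to one is unnecessary, since \Cref{thm:partition} holds for every $T$.
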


\begin{proof}
We work first under the displayed finite-entropy and integrability hypotheses, so every chain-rule expression is finite and no $\infty-\infty$ subtraction occurs.
The one-site conditional is proportional, relative to $\mu_\beta$, to
\begin{equation}
 e^{-\beta\Phi_i(z;X_{-i})},
 \qquad
 \Phi_i(z;X_{-i})=\frac1N\sum_{j\ne i}\g(z-x_j)-U_\beta(z).
\end{equation}
The following equality is the entropy chain rule, and the final bound is the relative-entropy Han--Shearer inequality; see Han \cite{Han1978} and, for the Polish product-space formulation used here, Madiman and Tetali \cite[Thm.~V, Cor.~VII, and (21)--(22)]{MadimanTetali2010}:
\begin{align}\label{eq:han-shearer}
 &\sum_{i=1}^N\E_{\mathsf Q}\Bigl[
 \mathsf H\bigl(\mathsf Q_i(\cdot\mid X_{-i})\mid\mu_\beta\bigr)\Bigr]\notag\\
 &\qquad=N \mathsf H(\mathsf Q\mid\mu_\beta^{\otimes N})
 -\sum_{i=1}^N\mathsf H(\mathsf Q_{-i}\mid\mu_\beta^{\otimes(N-1)})
 \ge \mathsf H(\mathsf Q\mid\mu_\beta^{\otimes N}).
\end{align}
Expanding each conditional entropy relative to its tilted law and using Jensen's lower bound on its normalizing constant give
\begin{equation}
 \cS_N(\mathsf Q)
 \ge \mathsf H(\mathsf Q\mid\mu_\beta^{\otimes N})
 +\beta\E_{\mathsf Q}\left[\sum_i\left(\Phi_i(x_i)-\int_{\C}\Phi_i\,\D\mu_\beta\right)\right].
\end{equation}
Direct ordered-pair counting yields
\begin{equation}
 \sum_i\left(\Phi_i(x_i)-\int_{\C}\Phi_i\D\mu_\beta\right)
 =2A_{N,\beta}(X)+\frac1N\sum_iU_\beta(x_i).
\end{equation}

Set
\begin{equation}
 B_{N,\beta}(X):=A_{N,\beta}(X)+\frac1N\sum_{i=1}^NU_\beta(x_i).
\end{equation}

Since
\begin{equation}
 \mathsf H(\mathsf Q\mid\mathbb P_{N,\beta})
 =\mathsf H(\mathsf Q\mid\mu_\beta^{\otimes N})+\beta\E_{\mathsf Q}[A_{N,\beta}]+\log \mathsf K_{N,\beta}(\mu_\beta),
\end{equation}
we obtain
\begin{equation}
 \cS_N(\mathsf Q)\ge \mathsf H(\mathsf Q\mid\mathbb P_{N,\beta})+\beta\E_{\mathsf Q}[B_{N,\beta}]-\log \mathsf K_{N,\beta}(\mu_\beta).
\end{equation}
The Donsker--Varadhan lemma $\E_{\mathsf Q}[G]\le \mathsf H(\mathsf Q\mid\mathsf P)+\log\E_{\mathsf P}\bigl[e^G\bigr]$, in the bounded form stated in \cite[(1.1)--(1.2) and the following inequality]{DupuisMao2022} and extended here by truncation, applied with $\mathsf P=\mathbb P_{N,\beta}$ and $G=-\lambda\beta B_{N,\beta}$, gives
\begin{equation}
 \beta\E_{\mathsf Q}[B_{N,\beta}]
 \ge-\frac1\lambda \mathsf H(\mathsf Q\mid\mathbb P_{N,\beta})
 -\frac1\lambda\log\E_{\mathbb P_{N,\beta}}\bigl[e^{-\lambda\beta B_{N,\beta}}\bigr],
\end{equation}
proving \eqref{eq:entropy-factorization}.

For the defect moment, Cauchy--Schwarz and \eqref{eq:modulated} give
\begin{equation}\label{eq:defect-moment}
 \E_{\mathbb P_{N,\beta}}\bigl[e^{-\lambda\beta B_{N,\beta}}\bigr]
 \le \mathsf K_{N,\beta}(\mu_\beta)^{-1}
 \mathsf K_{N,2(1+\lambda)\beta}(\mu_\beta)^{1/2}
 \left(\int_{\C} e^{-(2\lambda\beta/N)U_\beta}\,\D\mu_\beta\right)^{N/2}.
\end{equation}
For $X=e^{-2\lambda\beta U_\beta}$, concavity of $x^{1/N}$ implies
\begin{equation}
 (\E[X^{1/N}])^N\le\E[X].
\end{equation}
The one-body moment $\int_{\C} e^{-2\lambda\beta U_\beta}\D\mu_\beta$ is finite because $U_\beta(x)\ge-\log(1+|x|)-C$ and $\mu_\beta$ has Gaussian-polynomial tails.  \Cref{thm:partition} bounds the upper partition factors, while \eqref{eq:K-lower} bounds the inverse factor.  This proves \eqref{eq:D-uniform} and, in particular, the required integrability under $\mathbb P_{N,\beta}$.

We may therefore take $\mathsf Q=\mathbb P_{N,\beta}$ in \eqref{eq:entropy-factorization}; since both entropies then vanish,
\begin{equation}
 \log \mathsf K_{N,\beta}(\mu_\beta)
 +\frac1\lambda\log\E_{\mathbb P_{N,\beta}}\bigl[e^{-\lambda\beta B_{N,\beta}}\bigr]
 \ge0.
\end{equation}

At $\lambda=2$, the partition parameters are $\beta$ and $6\beta$, while the one-body exponent is $4\beta$.  Combining \eqref{eq:defect-moment} with the concavity estimate gives
\begin{align}
 \log \mathsf K_{N,\beta}(\mu_\beta)
 +\frac12\log\E_{\mathbb P_{N,\beta}}\bigl[e^{-2\beta B_{N,\beta}}\bigr]
 \le\frac12\log\mathsf K_{N,\beta}(\mu_\beta)
 &+\frac14\log\mathsf K_{N,6\beta}(\mu_\beta)\notag\\
 &+\frac14\log\int_{\C} e^{-4\beta U_\beta}\,\D\mu_\beta.
 \label{eq:D-compact-bound}
\end{align}
Therefore, \Cref{prop:equilibrium,thm:partition} imply that, for every compact interval $I\Subset(0,\infty)$,
\begin{equation}\label{eq:D-compact}
 \sup_{\beta\in I}\mathfrak D_{\beta,2}<\infty.
\end{equation}

\end{proof}

\leavevmode At $\lambda=2$, the conditional-entropy lower bound \eqref{eq:entropy-factorization} combines with the uniform one-site application of the weighted logarithmic Sobolev inequality \eqref{eq:weighted-lsi} from \Cref{thm:weighted}, with $\gamma=\beta$ and $Q=\beta$, to give \Cref{cor:defective}.  This is the defective inequality used in the Rothaus tightening of \Cref{sec:tightening}.

Denote by $\rho_{\rm one}(\beta)$ the uniform one-site logarithmic Sobolev constant and by $\mathfrak D_\beta$ the uniform defect at $\lambda=2$:
\begin{equation}
 \rho_{\rm one}(\beta):=\rho_{\rm wt}(\beta,\beta),
 \qquad \mathfrak D_\beta:=\mathfrak D_{\beta,2}.
\end{equation}

\begin{corollary}[Uniform defective LSI]\label{cor:defective}
For every fixed $\beta>0$, every $N\ge2$, and every real-valued $f\in \mathsf D_{N,\beta}$,
\begin{equation}\label{eq:defective}
 \Ent_{\mathbb P_{N,\beta}}(f^2)
 \le\frac4{\rho_{\rm one}(\beta)}\cE_{N,\beta}(f)
 +2\mathfrak D_\beta\int_{\C^N} f^2\,\D \mathbb P_{N,\beta}.
\end{equation}
\end{corollary}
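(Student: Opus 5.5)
The plan is to apply \Cref{thm:entropy-factorization} at $\lambda=2$ with $\D\mathsf Q=f^2\,\D\mathbb P_{N,\beta}$, after normalizing so that $\int f^2\,\D\mathbb P_{N,\beta}=1$. We first work with a bounded collision-free compactly supported smooth $f$; the hypotheses of \Cref{thm:entropy-factorization} are stated to cover this class. Since $\mathsf H(\mathsf Q\mid\mathbb P_{N,\beta})=\Ent_{\mathbb P_{N,\beta}}(f^2)$ when $\int f^2=1$, inequality \eqref{eq:entropy-factorization} at $\lambda=2$, combined with the definition \eqref{eq:D-uniform} of $\mathfrak D_\beta=\mathfrak D_{\beta,2}$, gives
\begin{equation*}
 \tfrac12\Ent_{\mathbb P_{N,\beta}}(f^2)\le \cS_N(\mathsf Q)+\mathfrak D_\beta .
\end{equation*}
This is where the factor $2$ in both terms of \eqref{eq:defective} comes from.

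It then remains to bound $\cS_N(\mathsf Q)$ by $\frac{2}{\rho_{\rm one}(\beta)}\cE_{N,\beta}(f)$. Fix $i$ and $X_{-i}$. The conditional law $\mathbb P_i(\cdot\mid X_{-i})$ has density proportional to $e^{-\beta|z|^2/2}\prod_{j\ne i}|z-z_j|^{\beta/N}$. This is a measure of the class in \Cref{thm:weighted} with $\gamma=\beta$ and total exponent $\beta(N-1)/N\le\beta=Q$. The conditional $\mathsf Q_i(\cdot\mid X_{-i})$ has density $f^2(\cdot,X_{-i})/m_i(X_{-i})$ relative to it, where $m_i(X_{-i})$ denotes the conditional mean of $f^2$. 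Therefore
\begin{equation*}
 \mathsf H\bigl(\mathsf Q_i\mid\mathbb P_i\bigr)=\frac{\Ent_{\mathbb P_i}(f^2)}{m_i}\le\frac{2}{\rho_{\rm one}(\beta)}\,\frac{1}{m_i}\int|\nabla_i f|^2\,\D\mathbb P_i .
\end{equation*}
The function $z\mapsto f(z,X_{-i})$ is smooth and compactly supported. For almost every $X_{-i}$ (namely those with distinct coordinates), its support avoids the points $z_j$, so it lies in the core of \eqref{eq:weighted-lsi}. Taking $\E_{\mathsf Q}$ multiplies by the weight $m_i\,\D(\mathbb P_{N,\beta})_{-i}$, and the factor $m_i$ cancels. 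This leaves $\frac{2}{\rho_{\rm one}}\int|\nabla_i f|^2\,\D\mathbb P_{N,\beta}$. On the set $\{m_i=0\}$ there is nothing to prove. Summing over $i$ gives $\cS_N(\mathsf Q)\le\frac{2}{\rho_{\rm one}}\cE_{N,\beta}(f)$. Combining with the display above yields \eqref{eq:defective} for normalized core functions. The general core case then follows by homogeneity, since both sides scale as $f^2$, and the case $f=0$ is trivial.

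The main obstacle is extending to all real $f\in\mathsf D_{N,\beta}$. Bounded core functions are dense in form norm: one can truncate smoothly by composing with a bounded smooth contraction $\varphi_M$ and let $M\to\infty$, with the energy controlled by the chain rule. Given core approximants $f_k\to f$ in $\|\cdot\|_{N,\beta}$, the right-hand side of \eqref{eq:defective} converges. For the left-hand side, we pass to an almost everywhere convergent subsequence. Lower semicontinuity of $\Ent(f^2)$ then follows from Fatou's lemma applied to $f^2\log f^2+e^{-1}\ge0$, together with convergence of $\int f_k^2$. This closes the argument.
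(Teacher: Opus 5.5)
Your proposal is correct and follows the paper's proof essentially step for step. The one-site weighted LSI with $\gamma=Q=\beta$ bounds $\cS_N(\mathsf Q)$ by $\frac{2}{\rho_{\rm one}(\beta)}\cE_{N,\beta}(f)$, \Cref{thm:entropy-factorization} at $\lambda=2$ supplies the factor $2$ and the defect $\mathfrak D_\beta$, homogeneity removes the normalization, and form-norm approximation with Fatou-based lower semicontinuity of entropy extends the bound to $\mathsf D_{N,\beta}$; your smooth-truncation step is harmless but unnecessary, since elements of $C_c^\infty(\Omega_N)$ are already bounded.
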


\begin{proof}
For a normalized bounded core function, set $\D\mathsf Q=f^2\,\D\mathbb P_{N,\beta}$. The one-site conditional law is
\begin{equation}
 \mathbb P_i(dz\mid X_{-i})\propto e^{-\beta|z|^2/2}\prod_{j\ne i}|z-x_j|^{\beta/N}\,\D z,
\end{equation}
whose total exponent is less than $\beta$.  \Cref{thm:weighted} gives the conditional LSI uniformly in the conditioned configuration.  The conditional chain rule gives
\begin{equation}
 \cS_N(\mathsf Q)\le\frac2{\rho_{\rm one}(\beta)}\cE_{N,\beta}(f).
\end{equation}
Apply \Cref{thm:entropy-factorization} with $\lambda=2$ and rescale by homogeneity.

For general $f\in \mathsf D_{N,\beta}$, approximate in form norm by core functions.  Entropy is lower semicontinuous under $L^2$ convergence (after passing to a subsequence and using the standard lower bound for $s\log s$), while the energy and $L^2$ terms converge.  Therefore, \eqref{eq:defective} holds on the closed domain before any centering operation is performed.
\end{proof}


\section{Proof of the main theorem}\label{sec:tightening}

\begin{proof}[Proof of \Cref{thm:main}]
Apply the Rothaus centering inequality \eqref{eq:rothaus-centering} with $\mathsf P=\mathbb P_{N,\beta}$.  By \Cref{lem:constants-domain}, $f-\E_{\mathbb P_{N,\beta}}[f]\in \mathsf D_{N,\beta}$.  Apply the already extended defective inequality \eqref{eq:defective} to $f-\E_{\mathbb P_{N,\beta}}[f]$, and then apply \Cref{thm:full-pi} to obtain

\begin{equation}\label{eq:final-lsi-coefficient}
 \Ent_{\mathbb P_{N,\beta}}(f^2)
 \le\left[
 \frac4{\rho_{\rm one}(\beta)}
 +(2\mathfrak D_\beta+2)C_P^{\rm all}(\beta)
 \right]\cE_{N,\beta}(f).
\end{equation}
Since $C_P^{\rm all}(\beta)=C_{\rm sym}(\beta)+C_{\rm rel}(\beta)$, an explicit $N$-uniform lower bound for the logarithmic Sobolev constants is
\begin{equation}\label{eq:rho-star}
 \rho_*(\beta)
 :=\frac{2}{
 4/\rho_{\rm one}(\beta)
 +(2\mathfrak D_\beta+2)
  \bigl(C_{\rm sym}(\beta)+C_{\rm rel}(\beta)\bigr)}.
\end{equation}
Every constituent is finite for fixed $\beta$ and independent of $N$, which proves the theorem.

It remains to verify the compact-temperature assertion.  Fix $I=[\beta_-,\beta_+]\Subset(0,\infty)$ and set
\begin{equation}
 \rho_{{\rm one},I}:=\beta_-\rho_{\rm wt}(1,\beta_+),
 \qquad
 C_{{\rm rel},I}:=\beta_-^{-1}C_{\rm wt}(1,2\beta_+),
\end{equation}
\begin{equation}
 C_{{\rm sym},I}:=\sup_{\beta\in I}C_{\rm sym}(\beta),
 \qquad
 \mathfrak D_I:=\sup_{\beta\in I}\mathfrak D_\beta.
\end{equation}
The first two constants are finite and positive by \eqref{eq:weighted-scaling}; they are common admissible constants because the one-site and relative-coordinate total exponents are bounded by $\beta_+$ and $2\beta_+$, respectively.  The other two constants are finite by \eqref{eq:C-sym-compact} and \eqref{eq:D-compact}.  Repeating the defective-LSI and full Poincar\'e arguments with these common constants gives, simultaneously for $\beta\in I$ and $N\ge2$,
\begin{equation}
 \Ent_{\mathbb P_{N,\beta}}(f^2)
 \le\left[
 \frac4{\rho_{{\rm one},I}}
 +(2\mathfrak D_I+2)
  \bigl(C_{{\rm sym},I}+C_{{\rm rel},I}\bigr)
 \right]\cE_{N,\beta}(f).
\end{equation}
Thus, the compact-temperature assertion holds with
\begin{equation}\label{eq:rho-I}
 \rho_I
 :=\frac{2}{
 4/\rho_{{\rm one},I}
 +(2\mathfrak D_I+2)
  \bigl(C_{{\rm sym},I}+C_{{\rm rel},I}\bigr)}>0.
\end{equation}

\end{proof}

\begin{remark}[The explicit lower bound]\label{rem:explicit-rate}
The lower bound in \eqref{eq:rho-star} records the constants produced by the weighted one-particle, permutation-invariant, relative-coordinate, partition, and Rothaus steps.  The same constituents, chosen uniformly on a compact inverse-temperature interval, give \eqref{eq:rho-I}.  Neither lower bound is claimed to be sharp, and the argument does not provide one positive lower bound uniform as $\beta\to0$ or $\beta\to\infty$.
\end{remark}


\section{Perspective and outlook}\label{sec:discussion}

We close by discussing the dependence on inverse temperature, the points at which the planar logarithmic structure enters, and the consequences and extensions suggested by the theorem.  The final subsections place our result within the author's broader Coulomb/Riesz program.

\subsection{Temperature dependence and effectivity}

The estimates are locally uniform in inverse temperature: for every compact interval $I\Subset(0,\infty)$, the lower bound in \Cref{thm:main} may be chosen uniformly for $\beta\in I$.  The point-weight constants in \Cref{thm:weighted}, the finite exceptional family entering $C_{\rm sym}(\beta)$, the partition defect $\mathfrak D_\beta$, and the final Rothaus tightening step all admit common finite bounds on such intervals.  Some of this dependence is explicit, while the quantitative strong-$A_\infty$ and weighted Sobolev constants and the fixed-particle spectral gaps are not presently effective enough for sharp temperature asymptotics.  The argument therefore does not provide one lower bound uniform on all of $(0,\infty)$ or controlled asymptotics as $\beta\to0$ or $\beta\to\infty$.

The theorem holds for every fixed inverse temperature $\beta>0$, rather than only for sufficiently small $\beta$, because the weighted one-particle estimate remains valid at every finite total exponent.  Indeed, the one-site conditionals have total exponent below $\beta$, and the relative-coordinate conditionals have total exponent below $2\beta$.  Thus, neither conditional step imposes an upper bound on $\beta$.  Small-$\beta$ restrictions arising from Hessian absorption or Dobrushin-type influence estimates reflect limitations of those criteria, not of the LSI established here.

\subsection{Where the planar logarithmic structure enters}

Four features of the argument are specific to the present model.  First, products of positive powers of planar distances have a conformal-metric interpretation that gives quantitative strong-$A_\infty$ geometry.  Second, the logarithmic force field satisfies the critical weak-$L^2$ estimate that pairs with the two-dimensional Ladyzhenskaya inequality in the quadratic-moment argument.  Third, after fixing a pair of indices $i<j$ and conditioning the $N$-particle Gibbs measure on the pair center and all outside particle positions, the conditional density of the relative coordinate contains the vanishing point weight $|p(u)|^{\beta/N}$, where $p$ is the scalar complex polynomial of degree $2N-3$ defined in \eqref{eq:relative-law}.  The logarithm of this weight has point singularities, and its total exponent is $\beta(2N-3)/N<2\beta$.  Fourth, $\log|\Delta_N|$ is pluriharmonic on the collision complement; the quadratic confinement supplies the entire positive Levi form in the weighted $\bar\partial$ estimate \eqref{eq:dbar}.

The isotropic quadratic confinement is used in three further ways.  It yields an exact Gaussian center-of-mass factor (see \Cref{prop:com}), and it supplies both the Gaussian tails in \Cref{thm:weighted} and the common-phase symmetry used in the permutation-invariant argument through \Cref{lem:phase}.  For a general uniformly convex potential, the center and relative coordinates need not decouple, the one-particle conditionals are no longer Gaussian, and common-phase invariance is absent unless the potential is radial.  Extending the proof would therefore require quantitative substitutes for these three inputs.

\subsection{Standard consequences and the modulated problem}

The standard semigroup and Herbst consequences of a logarithmic Sobolev inequality give exponential relative-entropy relaxation for the Coulomb overdamped Langevin dynamics and Gaussian concentration for Lipschitz observables, with constants uniform in $N$ and locally uniform in $\beta\in(0,\infty)$; see \cite[Thm.~5.2.1, Prop.~5.4.1, and~(5.4.2)]{BakryGentilLedoux2014}.  We do not record these routine consequences in detail here.

A more substantive problem concerns the modulated logarithmic Sobolev method of \cite{RosenzweigSerfaty2025}.  Generation of chaos along a mean-field evolution requires uniform finite-particle inequalities not only for the single quadratic equilibrium treated here but also for a class of external potentials or prescribed reference measures generated along the flow.  \cref{thm:main} does not transfer automatically to such modulated ensembles.  A natural next problem is to identify a stable class of confining potentials $V$ for which the corresponding full labeled Gibbs measures generated by singular Coulomb/Riesz interactions satisfy $N$-uniform logarithmic Sobolev inequalities with constants controlled by quantitative properties of $V$.  Such a theorem would provide the missing ingredient for extending the modulated logarithmic Sobolev program to Coulomb/Riesz dynamics with singular forces beyond the one-dimensional results of \cite{RosenzweigSerfaty2025}.

\subsection{The Hilbert--Schmidt Coulomb/Riesz program}\label{sec:hilbert-schmidt-program}

The present theorem is one central nonperturbative case in a broader program for canonical Coulomb/Riesz ensembles at the diffusive temperature scale, but its planar whole-space proof is not the template for the entire program.  Throughout this subsection, write
\begin{equation}\label{eq:outlook-riesz-potential}
 \g_{\s}(x):=
 \begin{cases}
  \dfrac1{\s}|x|^{-\s},&\s\ne0,\\
  -\log|x|,&\s=0,
 \end{cases}
 \qquad \s<\d,
\end{equation}
for the whole-space logarithmic/Riesz potentials.  By an abuse of notation, we use the same symbol $\g_{\s}$ for the corresponding mean-zero spectral potential on the flat torus $\T^\d\simeq [0,1)^{\d}$.  In forthcoming work \cite{RosenzweigHilbertSchmidtLSIForthcoming}, we obtain a complete high-temperature theory in the nonnegative (sub-)Coulomb Hilbert--Schmidt range: for every $\d\ge2$ and every $0\le \s<\d/2$ such that $\s\le \d-2$, the full labeled canonical ensembles satisfy logarithmic Sobolev inequalities with constants uniform in $N$ on explicit nontrivial inverse-temperature intervals, both on the torus and in the whole space under quadratic confinement.  The constants may be chosen locally uniformly as the inverse temperature varies over a compact subinterval of the admissible range.  

Three Coulomb cases go beyond this generic high-temperature statement.  The present whole-space two-dimensional model, the periodic two-dimensional Coulomb gas on the torus, and the periodic three-dimensional Coulomb gas satisfy full labeled $N$-uniform logarithmic Sobolev inequalities for every finite inverse temperature.  More precisely, in each case the logarithmic Sobolev constants have a positive lower bound uniform in $N$ and in $\beta$ when $\beta$ ranges over a compact subset of $(0,\infty)$ \cite{RosenzweigHilbertSchmidtLSIForthcoming}.  No positive lower bound uniform as $\beta\to\infty$ is asserted.

Despite their different local arguments, the broader results have the same outer structure \cite{RosenzweigHilbertSchmidtLSIForthcoming}.  The canonical Gibbs measure is first modulated around its thermal equilibrium.  Uniform centered or modulated partition estimates from the author's joint work with Delgadino and Gvalani \cite{DelgadinoGvalaniRosenzweigForthcoming} (see also \cite[Theorem~2.1]{duerinckxSingularMeanfieldLimits2026}) are used in a conditional-entropy argument giving a defective logarithmic Sobolev inequality.  A full labeled Poincar\'e inequality is proved independently.  Rothaus tightening then gives the logarithmic Sobolev inequality, and cutoff-uniform estimates identify the corresponding closed Dirichlet form for the unregularized interaction.

The broader proofs share this outer structure, but they establish its two crucial inputs, the uniform Poincar\'e inequality and the conditional estimates entering the defective logarithmic Sobolev inequality, by methods different from the planar weighted-Gaussian and complex-analytic arguments used here.  In particular, they replace, rather than generalize, the weighted Gaussian theorem and the particular complex-analytic treatment of labeled fluctuations in the present paper.

The closest periodic companion is the two-dimensional Coulomb gas on the flat torus.  Its Gibbs measure satisfies an $N$-uniform logarithmic Sobolev inequality on the full labeled configuration space for every $\beta>0$; the constant may be chosen uniformly when $\beta$ ranges over a compact subset of $(0,\infty)$ \cite{RosenzweigHilbertSchmidtLSIForthcoming}.  The proof retains the separation between an independently established Poincar\'e inequality and a defective logarithmic Sobolev inequality, and it still uses planar complex analysis and conditioning.  Compactness replaces the Gaussian tail analysis, while translation invariance and the torus Green equation require a different treatment of the collective translation mode.  The periodic theorem is therefore not a corollary of the present one, even though the two arguments share several planar ingredients.

The condition $\s<\d/2$ marks the range in which the centered pair kernel is square-integrable against the relevant product reference and the modulated partition estimate used in these proofs remains bounded uniformly in the particle number.  At $\s=\d/2$, the partition function develops logarithmic ultraviolet growth.  This is a threshold of the present proof strategy, not a proposed threshold for the validity of an $N$-uniform logarithmic Sobolev inequality.  Indeed, one-dimensional results of the author and Serfaty show that square-integrability of the centered kernel is not necessary for an $N$-uniform logarithmic Sobolev inequality, at least when one restricts to permutation-invariant observables \cite{RosenzweigSerfaty2025}.

For $\s<0$, the program has a different analytic character because the pair potentials extend continuously (in fact, H\"older continuously) across collisions.  In the same forthcoming work \cite{RosenzweigHilbertSchmidtLSIForthcoming}, the periodic spectral Riesz ensembles satisfy full labeled logarithmic Sobolev inequalities uniformly in $N$ for every $\s<0$ and every finite inverse temperature, locally uniformly on compact inverse-temperature intervals, against any one-site reference whose closed gradient form satisfies a logarithmic Sobolev inequality.  In the confined whole-space setting, analogous conclusions hold for the inhomogeneous Bessel family for every $\s<0$ whenever the one-particle confined measures have a common logarithmic Sobolev constant on the inverse-temperature interval, and for the homogeneous Riesz family throughout $-2<\s<0$ under the stated confinement assumptions.  At $\s=-2$, there is an exact competition between the interaction and the confinement, while quadratic confinement does not normalize the homogeneous model below that exponent.  The same work also gives prescribed-reference extensions and quantitative lower bounds for the logarithmic Sobolev constants.

We emphasize that none of these results follows formally from the present planar theorem; each requires additional arguments adapted to its geometry and interaction.

\bibliographystyle{amsalpha}
\bibliography{planar_2d_coulomb_gas_uniform_lsi_20260825_153810_UTC}

@book{AdamsFournier2003,
  author={Adams, Robert A. and Fournier, John J. F.},
  title={{Sobolev Spaces}},
  edition={2nd}, series={Pure and Applied Mathematics}, volume={140},
  publisher={Academic Press}, address={Amsterdam}, year={2003}}

@article{AkemannByun2019,
  author={Akemann, Gernot and Byun, Sung-Soo},
  title={The high temperature crossover for general {2D Coulomb} gases},
  journal={J. Statist. Phys.}, volume={175}, number={6},
  pages={1043--1065}, year={2019}, doi={10.1007/s10955-019-02276-6}}

@book{aneInegalitesSobolevLogarithmiques2000,
  author={An{\'e}, C{\'e}cile and Blach{\`e}re, S{\'e}bastien and Chafa{\"i}, Djalil and Foug{\`e}res, Pierre and Gentil, Ivan and Malrieu, Florent and Roberto, Cyril and Scheffer, Gr{\'e}gory},
  title={Sur les in{\'e}galit{\'e}s de {Sobolev} logarithmiques},
  series={Panoramas et Synth{\`e}ses}, volume={10},
  publisher={Soci{\'e}t{\'e} Math{\'e}matique de France}, address={Paris}, year={2000},
  isbn={2-85629-105-8}}

@book{AstalaIwaniecMartin2009,
  author={Astala, Kari and Iwaniec, Tadeusz and Martin, Gaven},
  title={Elliptic Partial Differential Equations and Quasiconformal Mappings in the Plane},
  series={Princeton Mathematical Series}, volume={48},
  publisher={Princeton University Press}, address={Princeton, NJ}, year={2009},
  doi={10.1515/9781400830114}}

@article{BartheRoberto2003,
  author={Barthe, Franck and Roberto, Cyril}, title={{Sobolev} inequalities for probability measures on the real line},
  journal={Studia Math.}, volume={159}, number={3}, pages={481--497}, year={2003}, doi={10.4064/sm159-3-9}}

@misc{BauerschmidtBodineauDagallier2025,
  author={Bauerschmidt, Roland and Bodineau, Thierry and Dagallier, Benoit},
  title={A criterion on the free energy for {log-Sobolev} inequalities in mean-field particle systems},
  year={2025}, eprint={2503.24372}, archivePrefix={arXiv},
  note={arXiv:2503.24372}}

@article{Bjorn2001, author={Bj{\"o}rn, Jana}, title={{Poincar{\'e}} inequalities for powers and products of admissible weights}, journal={Ann. Acad. Sci. Fenn. Math.}, volume={26}, number={1}, pages={175--188}, year={2001}}

@article{BCF2018,
  author={Bolley, Fran{\c{c}}ois and Chafa{\"i}, Djalil and Fontbona, Joaqu{\'i}n},
  title={Dynamics of a planar {Coulomb} gas},
  journal={Ann. Appl. Probab.}, volume={28}, number={5},
  pages={3152--3183}, year={2018}, doi={10.1214/18-AAP1386}}

@article{BonkLang2003, author={Bonk, Mario and Lang, Urs}, title={Bi-Lipschitz parameterization of surfaces}, journal={Math. Ann.}, volume={327}, number={1}, pages={135--169}, year={2003}, doi={10.1007/s00208-003-0443-8}}

@article{CGWW2009, author={Cattiaux, Patrick and Guillin, Arnaud and Wang, Feng-Yu and Wu, Liming}, title={Lyapunov conditions for super {Poincar{\'e}} inequalities}, journal={J. Funct. Anal.}, volume={256}, number={6}, pages={1821--1841}, year={2009}}

@article{CattiauxGuillin2017,
  author={Cattiaux, Patrick and Guillin, Arnaud},
  title={Hitting times, functional inequalities, {Lyapunov} conditions and uniform ergodicity},
  journal={J. Funct. Anal.}, volume={272}, number={6},
  pages={2361--2391}, year={2017}, doi={10.1016/j.jfa.2016.10.003}}

@incollection{ChafaiLehec2020, author={Chafa{\"i}, Djalil and Lehec, Joseph}, title={On {Poincar{\'e}} and logarithmic {Sobolev} inequalities for a class of singular {Gibbs} measures}, booktitle={Geometric Aspects of Functional Analysis}, series={Lecture Notes in Mathematics}, volume={2256}, pages={219--246}, publisher={Springer}, year={2020}}

@misc{ChewiNitandaZhang2024,
  author={Chewi, Sinho and Nitanda, Atsushi and Zhang, Matthew S.},
  title={Uniform-in-{$N$} {log-Sobolev} inequality for the mean-field {Langevin} dynamics with convex energy},
  year={2024}, eprint={2409.10440}, archivePrefix={arXiv},
  note={arXiv:2409.10440}}

@misc{DelgadinoGvalani2025,
  author={Delgadino, Matias G. and Gvalani, Rishabh S.},
  title={Sharp mean-field estimates for the repulsive log gas in any dimension},
  year={2025}, eprint={2506.22083}, archivePrefix={arXiv},
  note={arXiv:2506.22083}}

@article{DelgadinoGvalaniPavliotisSmith2023,
  author={Delgadino, Matias G. and Gvalani, Rishabh S. and Pavliotis, Grigorios A. and Smith, Scott A.},
  title={Phase transitions, logarithmic {Sobolev} inequalities, and uniform-in-time propagation of chaos for weakly interacting diffusions},
  journal={Comm. Math. Phys.}, volume={401}, number={1},
  pages={275--323}, year={2023}, doi={10.1007/s00220-023-04659-z}}

@book{DemaillyCADG,
  author={Demailly, Jean-Pierre},
  title={Complex Analytic and Differential Geometry},
  publisher={Institut Fourier, Universit{\'e} Grenoble Alpes},
  year={2012},
  note={Online version, 21 June 2012},
  url={https://www-fourier.univ-grenoble-alpes.fr/~demailly/manuscripts/agbook.pdf}}

@article{DiaconisShahshahani1981, author={Diaconis, Persi and Shahshahani, Mehrdad}, title={Generating a random permutation with random transpositions}, journal={Z. Wahrsch. Verw. Gebiete}, volume={57}, number={2}, pages={159--179}, year={1981}, doi={10.1007/BF00535487}}

@misc{duerinckxSingularMeanfieldLimits2026,
  author={Duerinckx, Mitia and Jabin, Pierre-Emmanuel},
  title={Singular mean-field limits for fluctuations around equilibrium},
  year={2026}, eprint={2605.28979}, archivePrefix={arXiv},
  primaryClass={math.AP}, note={arXiv:2605.28979}}

@article{DupuisMao2022,
  author={Dupuis, Paul and Mao, Yixiang},
  title={Formulation and properties of a divergence used to compare probability measures without absolute continuity},
  journal={ESAIM Control Optim. Calc. Var.}, volume={28},
  pages={Paper No. 10, 38}, year={2022}, doi={10.1051/cocv/2022002}}

@book{FoiasEtAl2001,
  author={Foias, C. and Manley, O. and Rosa, R. and Temam, R.},
  title={{Navier--Stokes} Equations and Turbulence},
  series={Encyclopedia of Mathematics and its Applications}, volume={83},
  publisher={Cambridge University Press}, address={Cambridge}, year={2001},
  doi={10.1017/CBO9780511546754}}

@book{FukushimaOshimaTakeda2011,
  author={Fukushima, Masatoshi and Oshima, Yoichi and Takeda, Masayoshi},
  title={Dirichlet Forms and Symmetric {Markov} Processes},
  edition={2nd revised and extended}, series={De Gruyter Studies in Mathematics}, volume={19},
  publisher={Walter de Gruyter}, address={Berlin}, year={2011},
  doi={10.1515/9783110218091}}

@article{GarciaZelada2019,
  author={Garc{\'i}a-Zelada, David},
  title={A large deviation principle for empirical measures on {Polish} spaces: Application to singular {Gibbs} measures on manifolds},
  journal={Ann. Inst. Henri Poincar{\'e} Probab. Stat.}, volume={55}, number={3},
  pages={1377--1401}, year={2019}, doi={10.1214/18-AIHP922}}

@book{Grafakos2014,
  author={Grafakos, Loukas}, title={Classical {Fourier} Analysis}, edition={3rd},
  series={Graduate Texts in Mathematics}, volume={249},
  publisher={Springer}, address={New York}, year={2014},
  doi={10.1007/978-1-4939-1194-3}}

@incollection{GuionnetZegarlinski2003,
  author={Guionnet, Alice and Zegarlinski, Boguslaw},
  title={Lectures on Logarithmic {Sobolev} Inequalities},
  booktitle={S{\'e}minaire de Probabilit{\'e}s XXXVI},
  series={Lecture Notes in Mathematics}, volume={1801},
  pages={1--134}, publisher={Springer}, address={Berlin}, year={2003},
  doi={10.1007/978-3-540-36107-7_1}}

@article{GuillinLeBrisMonmarche2023, author={Guillin, Arnaud and Le Bris, Thomas and Monmarch{\'e}, Pierre}, title={On systems of particles in singular repulsive interaction in dimension one: log and {Riesz} gas}, journal={J. {\'E}c. polytech. Math.}, volume={10}, pages={867--916}, year={2023}, doi={10.5802/jep.235}}

@article{GuillinLiuWuZhang2022, author={Guillin, Arnaud and Liu, Wei and Wu, Liming and Zhang, Chaoen}, title={Uniform {Poincar{\'e}} and logarithmic {Sobolev} inequalities for mean field particle systems}, journal={Ann. Appl. Probab.}, volume={32}, number={3}, pages={1590--1614}, year={2022}, doi={10.1214/21-AAP1707}}

@article{Han1978,
  author={Han, Te Sun},
  title={Nonnegative entropy measures of multivariate symmetric correlations},
  journal={Information and Control}, volume={36}, number={2},
  pages={133--156}, year={1978},
  doi={10.1016/S0019-9958(78)90275-9}}

@article{Hormander1965, author={H{\"o}rmander, Lars}, title={{$L^2$} estimates and existence theorems for the {$\bar\partial$} operator}, journal={Acta Math.}, volume={113}, pages={89--152}, year={1965}}

@inproceedings{KookZhangChewiErdogduLi2024,
  author={Kook, Yunbum and Zhang, Matthew S. and Chewi, Sinho and Erdogdu, Murat A. and Li, Mufan (Bill)},
  title={Sampling from the mean-field stationary distribution},
  booktitle={Proceedings of Thirty Seventh Conference on Learning Theory},
  series={Proceedings of Machine Learning Research}, volume={247},
  pages={3099--3136}, publisher={PMLR}, year={2024},
  url={https://proceedings.mlr.press/v247/kook24a.html}}

@article{Lambert2021,
  author={Lambert, Gaultier},
  title={Poisson statistics for {Gibbs} measures at high temperature},
  journal={Ann. Inst. Henri Poincar{\'e} Probab. Stat.}, volume={57}, number={1},
  pages={326--350}, year={2021}, doi={10.1214/20-AIHP1080}}

@article{LuMattingly2020,
  author={Lu, Yulong and Mattingly, Jonathan C.},
  title={Geometric ergodicity of {Langevin} dynamics with {Coulomb} interactions},
  journal={Nonlinearity}, volume={33}, number={2},
  pages={675--699}, year={2020}, doi={10.1088/1361-6544/ab514a}}

@article{MadimanTetali2010,
  author={Madiman, Mokshay and Tetali, Prasad},
  title={Information inequalities for joint distributions, with interpretations and applications},
  journal={IEEE Trans. Inform. Theory}, volume={56}, number={6},
  pages={2699--2713}, year={2010}, doi={10.1109/TIT.2010.2046253}}

@misc{Monmarche2024,
  author={Monmarch{\'e}, Pierre},
  title={Uniform {log-Sobolev} inequalities for mean field particles beyond flat-convexity},
  year={2024}, eprint={2409.17901}, archivePrefix={arXiv},
  note={arXiv:2409.17901}}

@article{OttoReznikoff2007, author={Otto, Felix and Reznikoff, Maria G.}, title={A new criterion for the logarithmic {Sobolev} inequality and two applications}, journal={J. Funct. Anal.}, volume={243}, number={1}, pages={121--157}, year={2007}}

@article{Rothaus1985, author={Rothaus, Oscar S.}, title={Analytic inequalities, isoperimetric inequalities and logarithmic {Sobolev} inequalities}, journal={J. Funct. Anal.}, volume={64}, number={2}, pages={296--313}, year={1985}, doi={10.1016/0022-1236(85)90079-5}}

@article{RosenzweigSerfaty2025, author={Rosenzweig, Matthew and Serfaty, Sylvia}, title={Modulated logarithmic {Sobolev} inequalities and generation of chaos}, journal={Ann. Fac. Sci. Toulouse Math. (6)}, volume={34}, number={1}, pages={107--134}, year={2025}, doi={10.5802/afst.1807}}

@article{Semmes1993, author={Semmes, Stephen}, title={Bi-Lipschitz mappings and strong {$A_\infty$} weights}, journal={Ann. Acad. Sci. Fenn. Math.}, volume={18}, pages={211--248}, year={1993}}

@book{Serfaty2024Lectures,
  author={Serfaty, Sylvia}, title={Lectures on {Coulomb} and {Riesz} Gases},
  series={American Mathematical Society Colloquium Publications}, volume={70},
  publisher={American Mathematical Society}, address={Providence, RI}, year={2026},
  doi={10.1090/coll/070}}

@misc{Wang2024,
  author={Wang, Songbo},
  title={Uniform {log-Sobolev} inequalities for mean field particles with flat-convex energy},
  year={2024}, eprint={2408.03283}, archivePrefix={arXiv},
  note={arXiv:2408.03283}}

@book{Wang2005, author={Wang, Feng-Yu}, title={Functional Inequalities, {Markov} Semigroups and Spectral Theory}, publisher={Science Press}, address={Beijing--New York}, year={2005}}

@incollection{DavidSemmes1990,
  author={David, Guy and Semmes, Stephen},
  title={Strong {$A_\infty$} weights, {Sobolev} inequalities and quasiconformal mappings},
  booktitle={Analysis and Partial Differential Equations},
  series={Lecture Notes in Pure and Applied Mathematics}, volume={122},
  pages={101--111}, publisher={Dekker}, year={1990}}

@book{BakryGentilLedoux2014,
  author={Bakry, Dominique and Gentil, Ivan and Ledoux, Michel},
  title={Analysis and Geometry of {Markov} Diffusion Operators},
  series={Grundlehren der mathematischen Wissenschaften}, volume={348},
  publisher={Springer}, year={2014}, doi={10.1007/978-3-319-00227-9}}

@incollection{BakryEmery1985,
  author={Bakry, Dominique and {\'E}mery, Michel},
  title={Diffusions hypercontractives},
  booktitle={S{\'e}minaire de Probabilit{\'e}s XIX, 1983/84},
  series={Lecture Notes in Mathematics}, volume={1123},
  pages={177--206}, publisher={Springer}, address={Berlin}, year={1985},
  doi={10.1007/BFb0075847}}

@article{Gross1975,
  author={Gross, Leonard}, title={Logarithmic {Sobolev} inequalities},
  journal={Amer. J. Math.}, volume={97}, number={4},
  pages={1061--1083}, year={1975}, doi={10.2307/2373688}}

@article{HolleyStroock1987,
  author={Holley, Richard and Stroock, Daniel},
  title={Logarithmic {Sobolev} inequalities and stochastic {Ising} models},
  journal={J. Statist. Phys.}, volume={46}, number={5--6},
  pages={1159--1194}, year={1987}, doi={10.1007/BF01011161}}

@article{RocknerWang2001,
  author={R{\"o}ckner, Michael and Wang, Feng-Yu},
  title={Weak {Poincar{\'e}} inequalities and {$L^2$}-convergence rates of {Markov} semigroups},
  journal={J. Funct. Anal.}, volume={185}, number={2},
  pages={564--603}, year={2001}, doi={10.1006/jfan.2001.3776}}

@misc{RosenzweigBlogPartI,
  author={Rosenzweig, Matthew},
  title={Uniform logarithmic {Sobolev} inequalities for the {2D Coulomb} gas at the diffusive temperature scale, {Part~I}},
  year={2026}, month={August}, day={11},
  howpublished={Research exposition on the author's website},
  note={\url{https://matthewrosenzweigwork-max.github.io/posts/uniform-logarithmic-sobolev-2d-coulomb-gas-part-i/}}}

@unpublished{DelgadinoGvalaniRosenzweigForthcoming,
  author={Delgadino, Matias G. and Gvalani, Rishabh S. and Rosenzweig, Matthew},
  title={Sharp mean-field estimates for diffusive log/{Riesz} gases in the {Hilbert--Schmidt} regime},
  note={Work in preparation},
  year={2026}}

@misc{Chafai2026Ginibre,
  author={Chafa{\"i}, Djalil},
  title={An optimal {Poincar{\'e}} inequality for the complex {Ginibre} log-gas},
  year={2026}, eprint={2608.19358}, archivePrefix={arXiv}, primaryClass={math.PR}, note={arXiv:2608.19358}}

@misc{Suzuki2026Ginibre,
  author={Suzuki, Kohei},
  title={Spectral gap for unlabelled {Ginibre} interacting {Brownian} motion},
  year={2026}, eprint={2608.17437}, archivePrefix={arXiv}, primaryClass={math.PR}, note={arXiv:2608.17437}}

@unpublished{RosenzweigFiniteGinibreNotes2026,
  author={Rosenzweig, Matthew},
  title={Permutation-invariant functional inequalities for finite {Ginibre} {I}: The exact {Poincar{\'e}} rate and the centered {LSI} frontier},
  note={Unpublished note, 12 August 2026; \href{https://matthewrosenzweigwork-max.github.io/assets/notes/permutation-invariant-functional-inequalities-finite-ginibre-20260812.pdf}{available online}},
  year={2026}}

@unpublished{RosenzweigInfiniteGinibreNotes2026,
  author={Rosenzweig, Matthew},
  title={A spectral gap for the unlabeled infinite {Ginibre} interacting {Brownian} motion},
  note={Unpublished note, 11 August 2026; \href{https://matthewrosenzweigwork-max.github.io/assets/notes/spectral-gap-unlabeled-infinite-ginibre-20260811.pdf}{available online}},
  year={2026}}

@unpublished{RosenzweigHilbertSchmidtLSIForthcoming,
  author={Rosenzweig, Matthew},
  title={Uniform logarithmic {Sobolev} inequalities for diffusive log/{Riesz} gases in the {Hilbert--Schmidt} regime},
  note={Work in preparation},
  year={2026}}

\end{document}